\documentclass[10pt]{amsart}

\usepackage{amsmath,amssymb,amsthm,graphicx,pstricks,comment,amscd,amsfonts,latexsym}
\usepackage[all]{xy}

\usepackage{lscape}
\usepackage{hyperref}
\usepackage{tikz}

\usepackage{epstopdf}

\numberwithin{equation}{section}

\newtheorem{theorem}{Theorem}[section]

\newtheorem{lemma}[theorem]{Lemma}
\newtheorem{corollary}[theorem]{Corollary}
\newtheorem{proposition}[theorem]{Proposition}
\theoremstyle{remark}
\newtheorem{remark}[theorem]{Remark}
\newtheorem{example}[theorem]{Example}
\theoremstyle{definition}
\newtheorem{definition}[theorem]{Definition}

\newcommand{\flip}{\mathfrak{f}}

\newcommand{\suchthat}{ \ | \ }
\newcommand{\myid}{1\hspace{-0.15cm}1}
\newcommand{\ZZ}{\mathbb{Z}}
\newcommand{\CC}{\mathbb{C}}

\newcommand{\pathalg}[1]{\CC\langle#1\rangle}
\newcommand{\RA}[1]{\CC\langle\hspace{-0.05cm}\langle #1\rangle\hspace{-0.05cm}\rangle}

\newcommand{\initial}{\operatorname{init}}
\newcommand{\terminal}{\operatorname{term}}

\newcommand{\surface}{\Sigma}
\newcommand{\marked}{\mathbb{M}}
\newcommand{\punct}{\mathbb{P}}
\newcommand{\surf}{(\surface,\marked)}
\newcommand{\surfaceprime}{\surface'}
\newcommand{\markedprime}{\marked'}
\newcommand{\surfprime}{(\surfaceprime,\markedprime)}
\newcommand{\triangtau}{\tau}
\newcommand{\Qtau}{Q(\triangtau)}
\newcommand{\unredQtau}{\widehat{Q}(\triangtau)}
\newcommand{\Stau}{S(\triangtau)}
\newcommand{\unredStau}{\widehat{S}(\triangtau)}
\newcommand{\QStau}{(\Qtau,\Stau)}

\newcommand{\triangsigma}{\sigma}

\newcommand{\bipgraph}[1]{G(#1)}
\newcommand{\whiteset}{\mathbb{W}}
\newcommand{\blackset}{\mathbb{B}}
\newcommand{\greyset}{\mathbb{G}}
\newcommand{\triang}{\triangle}
\newcommand{\surftriang}{(\surface_\triang,\marked_\triang)}
\newcommand{\surfpunct}[1]{(\surface_{#1},\marked_{#1})}
\newcommand{\head}{h}
\newcommand{\tail}{t}
\newcommand{\Hom}{\operatorname{Hom}}
\newcommand{\gr}{\operatorname{gr}}

\newcommand{\End}{\operatorname{End}}

\newcommand{\unredCtau}[1]{\widehat{C}_{#1}(\tau)}
\newcommand{\Ctau}[1]{C_{#1}(\tau)}

\setlength{\textheight}{595pt}
\addtolength{\voffset}{-10pt}
\addtolength{\textheight}{55pt}
\addtolength{\textwidth}{35pt}
\addtolength{\evensidemargin}{-40pt}
\addtolength{\headsep}{10pt}

\begin{document}

\title{Derived invariants for surface cut algebras II: The punctured case}
\author[Claire Amiot, Daniel Labardini-Fragoso and Pierre-Guy Plamondon]{Claire Amiot \and Daniel Labardini-Fragoso\and  Pierre-Guy Plamondon}
\address{Institut Fourier, Universit\'{e} Joseph Fourier, Grenoble, France.}
\email{claire.amiot@ujf-grenoble.fr}
\address{Instituto de Matem\'{a}ticas, Universidad Nacional Aut\'{o}noma de M\'{e}xico.}
\email{labardini@matem.unam.mx}
\address{Laboratoire de Math\'{e}matique d'Orsay, Univ. Paris-Sud, CNRS, Univ. Paris-Saclay, 91405 Orsay, France.}
\email{pierre-guy.plamondon@math.u-psud.fr}
\keywords{Quiver with potential, Jacobian algebra, surface with marked points, triangulation, global dimension, derived equivalence, cluster category}
\thanks{The first and third authors were partially supported by the French ANR grant SC3A (ANR-15-CE40-0004-01). The second author was supported by the grants \emph{PAPIIT-IA102215} and \emph{CONACyT-238754}.}
\maketitle

\begin{abstract}
For each algebra of global dimension 2 arising from the quiver with potential associated to a triangulation of an unpunctured surface, Amiot-Grimeland have defined an integer-valued function on the first singular homology group of the surface, and have proved that two such algebras of global dimension 2 are derived equivalent precisely when there exists an automorphism of the surface that makes their associated functions coincide.

In the present paper we generalize the constructions and results of Amiot-Grimeland to the setting of arbitrarily punctured surfaces. As an application, we show that there always is a derived equivalence between any two algebras of global dimension 2 arising from the quivers with potential of (valency $\geq 2$) triangulations of arbitrarily punctured polygons.

While in the unpunctured case the quiver with potential of any triangulation admits cuts yielding algebras of global dimension at most 2, in the case of punctured surfaces the QPs of some triangulations do not admit cuts, and even when they do, the global dimension of the corresponding degree-0 algebra may exceed 2. In this paper we give a combinatorial characterization of each of these two situations.
\end{abstract}

\tableofcontents


\section{Introduction}


The aim of this paper is to study derived equivalences for a class of finite-dimensional algebras defined from triangulations of punctured surfaces with boundary.

The algebras we consider in this paper arose from the connections between Fomin and Zelevinsky's cluster algebras \cite{Fomin-Zelevinsky} and two subjects: triangulations of Riemann surfaces and representation theory of algebras.

On one hand, given a triangulation of an orientable surface (possibly with boundary) with marked points, one can consider the adjacency quiver of this triangulation.  It was proved in \cite{FST} that the combinatorics of the flips of the triangulation are closely related to the combinatorics of mutation in the cluster algebra associated to the quiver.  

On the other hand, it was discovered in \cite{Caldero-Chapoton} that the representations of a quiver allow us to explicitly recover the generators of cluster algebras.  The most general results are obtained by considering quivers with potentials (as introduced in \cite{DWZ1}); the combinatorics of the associated cluster algebra is then encoded in a triangulated category, the generalized cluster category, defined by the first author in \cite{Amiot-gldim2}.

These two approaches to cluster algebras can be linked in the following way: to the quiver of a triangulated surface, the second author associated a potential in \cite{LF1} (see also \cite{Caldero-Chapoton-Schiffler} and \cite{ABCP}), and proved, in broad terms, that the combinatorics of the two approaches agree.  

The process of mutation in the cluster category bears many similarities to tilting theory; in particular, it was proved in \cite{Assem-Brustle-Schiffler} that to any tilted algebra given by a quiver with relations, one can associate a cluster-tilted algebra defined by a quiver with potential; the new quiver is obtained from the original one by adding arrows.

Surface cut algebras (or surface algebras), introduced in \cite{DavidR-Schiffler}, are defined by the inverse process of removing arrows from a quiver with potential arising from a triangulation of a surface.  More precisely, let $(Q(\tau), S(\tau))$ be the quiver with potential associated to a triangulation $\tau$ of a marked surface $(\Sigma, \mathbb{M})$.  Let $d$ be a degree map assigning degree $0$ or $1$ to each arrow of $Q(\tau)$, in such a way that the potential $S(\tau)$ is homogeneous of degree $1$.  Such a degree map is called a cut.  The \emph{surface cut algebra} $\Lambda(\tau,d)$ is the degree zero subalgebra of the Jacobian algebra of $(Q(\tau),S(\tau))$, provided it has global dimension $2$.  Equivalently, the cut algebra is obtained from the Jacobian algebra by quotienting by the ideal generated by all arrows of degree $1$ (in other words, by ``removing'' these arrows), hence the similarity with the process in \cite{Assem-Brustle-Schiffler} where arrows are added to tilted algebras.

Since tilted algebras of a given type are derived equivalent, it becomes natural to ask whether surface cut algebras arising from the same marked surface are also derived equivalent.  Our main result characterizes the cases where this is true.

\begin{theorem}[Theorem \ref{theorem cluster cat}]
Let $\surf$ be a surface with non-empty boundary. 
 Let $\Lambda=\Lambda(\tau,d)$ and $\Lambda'=\Lambda(\tau', d')$ be surface cut algebras associated with valency $\geq 2$-triangulations $\tau$ and $\tau'$ of $\surf$, with respective admissible cuts $d$ and $d'$. Then the following statements are equivalent:
\begin{enumerate}
\item there is a triangle equivalence $\mathcal{D}^b(\Lambda)\cong\mathcal{D}^b(\Lambda')$;
\item there exists an orientation preserving homeomorphism $\Phi:\Sigma\to \Sigma$ with $\Phi(\mathbb M)=\mathbb M$ such that for any closed curve $\gamma$, $d(\overline{\gamma}^\tau)=d'(\overline{\Phi(\gamma)}^{\tau'})$.
\end{enumerate}
\end{theorem}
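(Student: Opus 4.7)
The plan is to follow the template of Amiot--Grimeland in the unpunctured case, interpreting a cut $d$ on $(Q(\tau),S(\tau))$ as a $\mathbb{Z}$-grading on the Jacobian algebra, and invoking the Keller--Amiot equivalence to realize the generalized cluster category of $\Lambda(\tau,d)$ as the cluster category $\mathcal{C}_\surf$ attached to the surface. In that setting, derived equivalence of two global-dimension-$2$ endomorphism algebras of basic cluster-tilting objects in a $2$-Calabi--Yau category is controlled by the existence of a triangle auto-equivalence of the ambient cluster category sending one cluster-tilting object to the other \emph{compatibly with the gradings}. So the first step is to upgrade the known invariance results (cluster category depending only on $\surf$) to the graded setting, and to check that under the valency $\geq 2$ hypothesis every admissible cut in fact produces a genuine grading on the generalized cluster category of $\Lambda$.

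For the implication $(2)\Rightarrow(1)$, I would first use the homeomorphism $\Phi$ to replace $\tau$ by $\Phi(\tau)$, reducing to the case where $\tau$ and $\tau'$ are triangulations of the \emph{same} surface whose admissible cuts define the same function on closed curves. Any two valency $\geq 2$ triangulations of $\surf$ can be connected by a sequence of flips (avoiding intermediate valency-$1$ configurations, after suitable perturbation), and the core technical lemma is that each flip lifts to a compatible graded mutation of the quiver with potential: the difference of cut values at a flipped arc is precisely the discrepancy measured by the homology function on the closed curve encircling that arc, so the equality of homology functions forces all these discrepancies to vanish. Each such graded flip produces a tilting complex of projectives, giving a derived equivalence between the corresponding cut algebras; concatenating yields $\mathcal{D}^b(\Lambda)\cong\mathcal{D}^b(\Lambda')$.

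For $(1)\Rightarrow(2)$, a derived equivalence $\mathcal{D}^b(\Lambda)\cong\mathcal{D}^b(\Lambda')$ induces, by Keller's theorem and the identification of generalized cluster categories with $\mathcal{C}_\surf$, a triangle auto-equivalence $F:\mathcal{C}_\surf\to\mathcal{C}_\surf$ carrying the cluster-tilting object $T_\tau$ to $T_{\tau'}$. I would invoke the rigidity of surface cluster categories (every auto-equivalence comes from a homeomorphism of $\surf$ preserving $\marked$ and its orientation, possibly up to the shift and known auto-equivalences coming from the punctures and the boundary) to produce the orientation-preserving homeomorphism $\Phi$. The grading on each side is transported along $F$, and, by construction of the homology invariant as a sum of cut values along arrows crossed by a curve, the equality $d(\overline{\gamma}^\tau)=d'(\overline{\Phi(\gamma)}^{\tau'})$ for every closed curve $\gamma$ follows from the compatibility of $F$ with the gradings.

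The main obstacle is the punctured case itself: as the abstract emphasizes, some QPs of punctured triangulations admit no cut, and when they do the degree-$0$ algebra may exceed global dimension $2$, so both the definition of $\Lambda(\tau,d)$ and the Keller--Amiot identification require a careful combinatorial restriction (the valency $\geq 2$ hypothesis, which also guarantees that flips stay inside the admissible class). A secondary difficulty is the rigidity step: the cluster category of a punctured surface has extra auto-equivalences associated to the punctures (the tagging involution, rotations of tagged triangulations), and one must check that the homology function on closed curves is insensitive to these, so that the homeomorphism $\Phi$ extracted from $F$ really witnesses the matching of cuts on \emph{all} closed curves, including those enclosing punctures.
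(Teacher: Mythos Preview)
Your outline for $(2)\Rightarrow(1)$ is essentially the paper's argument: transport $d'$ along $\Phi^{-1}$ to a cut $d''$ on $\tau'':=\Phi^{-1}(\tau')$, connect $\tau''$ to $\tau$ by a sequence of flips through valency $\geq 2$-triangulations, observe via the curve-evaluation lemma that the induced grading $\mu_s^L(d'')$ is equivalent to $d$, and conclude by the Amiot--Oppermann criterion \cite[Cor.~6.14]{AO-cl-equiv-der-equiv}.

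For $(1)\Rightarrow(2)$ there is a genuine gap. The induced equivalence $f:\mathcal{C}_2(\Lambda)\to\mathcal{C}_2(\Lambda')$ does \emph{not} carry $T_\tau$ to $T_{\tau'}$: it carries $\pi\Lambda$ to some cluster-tilting object $f(\pi\Lambda)$, which under the bijection with tagged triangulations corresponds to an a priori unrelated tagged triangulation $\tau^{\bowtie}$. So even if one could invoke a rigidity theorem for auto-equivalences of $\mathcal{C}_\surf$, the homeomorphism it would produce relates $\tau$ to $\tau^{\bowtie}$, not to $\tau'$; the link to $\tau'$ and to $d'$ is still missing.

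The paper does not use auto-equivalence rigidity at all. Instead it computes the \emph{graded} endomorphism algebra of $f(\pi\Lambda)$ in two ways. On one side, $F$ commutes with $\mathbb{S}_2$, so this graded algebra is $\mathcal{P}(Q(\tau),S(\tau),d)$. On the other side, one connects $\tau'$ to $\tau^{\bowtie}$ by flips (first $s'$ to the ideal triangulation $(\tau^{\bowtie})^\circ$ through valency $\geq 2$-triangulations, then $s$ as in Lemma~\ref{lemma:tau-and-tau-circ-equivalent-grading}), and the graded-flip/graded-mutation compatibility (Theorem~\ref{thm:flip=>graded-right-equivalence}) together with Lemma~\ref{lemma:tau-and-tau-circ-equivalent-grading} identifies the same graded algebra with $\mathcal{P}(Q((\tau^{\bowtie})^\circ),S((\tau^{\bowtie})^\circ),\mu^L_{s'}d')$. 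This yields a graded equivalence of two Jacobian algebras attached to ideal triangulations, and the homeomorphism $\Phi$ is then extracted from Proposition~\ref{fundamental}, whose input is the Bridgeland--Smith uniqueness of block decompositions of the quiver (Proposition~\ref{prop::BS}), not a classification of triangle auto-equivalences of $\mathcal{C}_\surf$. The curve identity then follows from Lemma~\ref{lemma:graded-flip-preserves-evaluation}. Your worry about the tagging involutions at punctures is exactly why the paper routes through $(\tau^{\bowtie})^\circ$ and proves Lemma~\ref{lemma:tau-and-tau-circ-equivalent-grading} separately; this is the substitute for the missing ``rigidity'' step.
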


This generalizes results obtained in \cite{David-Roesler} (for unpunctured spheres and $\tau=\tau'$) and in \cite{Amiot-Grimeland} (for unpunctured surfaces in general).

The proof of the main theorem relies heavily on the cluster category defined in \cite{Amiot-gldim2} for algebras of global dimension $2$.  For this reason, the assumption that the surface cut algebras have global dimension $2$ is essential.  While not all algebras constructed in this way have this property, we prove the following

\begin{proposition}[Corollary \ref{cor:existence-surface-algebra}]
  Any marked surface with non-empty boundary admits a triangulation and a cut such that the resulting cut algebra has global dimension $2$.
\end{proposition}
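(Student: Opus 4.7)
The plan is to construct, for any marked surface $\surf$ with non-empty boundary, an explicit triangulation $\triangtau$ of valency $\geq 2$ together with an admissible cut $d$ whose cut algebra $\Lambda(\triangtau,d)$ has global dimension $2$, reducing the punctured case to Amiot-Grimeland's unpunctured construction.

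First, remove every puncture from $\marked$, obtaining an unpunctured marked surface $\surfprime$. By the Amiot-Grimeland result cited in the introduction, $\surfprime$ admits a valency $\geq 2$ triangulation $\triangsigma$ together with an admissible cut $d'$ such that $\Lambda(\triangsigma, d')$ has global dimension $2$. Next, reinsert the punctures one at a time: for each puncture $p$, locate the triangle $\triangle$ of the current triangulation containing $p$ and add two arcs from $p$ to two distinct non-puncture vertices of $\triangle$, adding one extra diagonal if needed to triangulate the leftover quadrilateral region. Each puncture ends up with valency exactly $2$, so the final triangulation $\triangtau$ of $\surf$ is a valency $\geq 2$ triangulation.

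Second, extend $d'$ to a cut $d$ on $\triangtau$ by assigning degrees to the new arrows introduced at each puncture so that every cycle of $\Stau$ through those arrows is homogeneous of degree $1$. This is possible because the new contributions to $\Stau$ are short and essentially local: the cycle going once around each puncture $p$, plus the $3$-cycles of the newly created triangles around $p$. A direct combinatorial check shows that these constraints are mutually compatible, so a consistent extension of the cut exists.

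The main obstacle is verifying that $\Lambda(\triangtau,d)$ still has global dimension $2$. This relies on the combinatorial characterization of global dimension $\leq 2$ for surface cut algebras established earlier in the paper: one must show that inserting a puncture in the above controlled way never produces the forbidden local patterns identified by the criterion. The key point is that a valency-$2$ puncture whose two incident arcs terminate at non-puncture vertices only introduces a short puncture-cycle into $\Stau$, so the resulting new relations in the Jacobian algebra affect only finitely many minimal projective resolutions and do not extend any of them beyond length $2$. Once this local check is carried out, combining it with the inductive hypothesis on $\triangsigma$ yields the global statement.
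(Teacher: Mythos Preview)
Your strategy has a genuine structural gap: you build a triangulation in which every puncture has valency exactly $2$, but the characterization of global dimension $\leq 2$ that you invoke (Proposition~\ref{prop:pd(S_i)>2}, together with the description of indecomposable projectives in Proposition~\ref{prop:shape-projectives}) is stated and proved only for valency $\geq 3$-triangulations. At a valency-$2$ puncture the reduced quiver $Q(\tau)$ and the potential $S(\tau)$ differ nontrivially from the unreduced ones (this is precisely the situation treated via the map $\varphi_1$ in~\eqref{eq:hatCtau_1->Ctau_1}), and the local analysis of projectives in Section~\ref{section7} does not cover that case. So the sentence ``one must show that inserting a puncture in the above controlled way never produces the forbidden local patterns identified by the criterion'' appeals to a criterion that is not available for the triangulation you have produced.

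There is also an imprecision in the insertion step itself. Placing $p$ inside a triangle $\triangle$ and joining it to two vertices leaves a quadrilateral; the only diagonal of that quadrilateral not incident to $p$ is a second arc between the two chosen vertices going around $p$, and you do not specify this, nor do you verify that the resulting local configuration admits a cut extending $d'$ with the right degree on the puncture-cycle. Compare this with the paper's argument: it keeps all punctures at valency $\geq 3$ by connecting each new puncture to \emph{all three} corners of a carefully chosen triangle $\Delta$ (one with exactly one boundary side), writes down the new cut explicitly, and maintains the inductive invariant that the only triangles with a boundary side and a puncture carry an arrow of degree~$0$. That invariant is exactly what rules out the two forbidden configurations of Proposition~\ref{prop:pd(S_i)>2}. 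Your outline contains neither an explicit cut nor an analogous invariant, so the ``direct combinatorial check'' you defer is in fact the entire content of the proof.
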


Finally, we show that triangulations of punctured surfaces do not necessarily admit admissible cuts, which contrasts with the situation in the unpunctured case.

\begin{proposition}[Corollary \ref{coro::nocut} and Proposition \ref{prop::nocut}]
$\phantom{x}$
  \begin{itemize}
    \item If $(\Sigma, \mathbb{M})$ has empty boundary and is not a sphere with less than $5$ punctures, and if $\tau$ is an ideal triangulation of $(\Sigma, \mathbb{M})$ for which every puncture has valency at least $3$, then $(Q(\tau), S(\tau))$ admits no admissible cuts.
    \item If $(\Sigma, \mathbb{M})$ has non-empty boundary and at least one puncture, then there exist triangulations of $(\Sigma, \mathbb{M})$ whose associated quivers with potential do not admit admissible cuts.
  \end{itemize}
\end{proposition}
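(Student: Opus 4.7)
My plan is to handle the two bullets separately. The first is a counting/Euler--characteristic argument; the second is by explicit construction of an obstructing triangulation, reproducing the same counting obstruction inside a small disk.

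\textbf{First bullet.} Under the hypotheses — empty boundary, every marked point a puncture of valency at least $3$ — every triangle of $\triangtau$ has all three sides arcs and every corner at a puncture. Consequently every arrow of $\Qtau$ lies in \emph{exactly one} $3$-cycle of $\Stau$ coming from a triangle and in \emph{exactly one} cycle of $\Stau$ coming from a puncture. Suppose, for contradiction, that an admissible cut $d\colon Q_1\to\{0,1\}$ exists, so that $\sum_{\alpha\in C}d(\alpha)=1$ for every cycle $C$ appearing in $\Stau$. Summing this identity over all triangle cycles yields $\sum_\alpha d(\alpha)=|T|$, the number of triangles of $\triangtau$; summing over all puncture cycles yields $\sum_\alpha d(\alpha)=|\punct|$. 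Hence $|T|=|\punct|$. On the other hand, Euler's formula for a closed oriented surface of genus $g$ triangulated with the $|\punct|$ punctures as vertices gives $3|T|=2|E|$ and $|\punct|-|E|+|T|=2-2g$, whence $|T|=2|\punct|-4+4g$. Combining these forces $|\punct|=4-4g$, whose only solution with $|\punct|>0$ is $(g,|\punct|)=(0,4)$, a sphere with four punctures — excluded by hypothesis. Hence no admissible cut exists.

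\textbf{Second bullet.} I plan to replicate the above obstruction inside a small embedded disk. Given $\surf$ with non-empty boundary and at least one puncture, pick a boundary marked point $b$ and take either (i)~a loop $\alpha$ at $b$ bounding a disk $D\subset\surface$ containing $k\geq 4$ punctures and no other marked points, or (ii), when a second boundary marked point $b'$ is available on the same component of $\partial\surface$, an arc $\alpha$ from $b$ to $b'$ which together with one boundary segment of $\surface$ bounds a disk $D$ containing $k\geq 2$ punctures and no other marked points. In either case one chooses a triangulation $\triangtau$ of $\surf$ whose restriction to $D$ makes $\alpha$ the only arc incident to $b$ (resp.\ to $b,b'$) from inside $D$, and which gives every puncture inside $D$ valency at least $2$. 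Euler's formula applied inside $D$ gives $T_D^{\mathrm{int}}=2k-1$ internal triangles in $D$ (all of them, in case (i); all but the one using the boundary segment, in case (ii)). Running the Part-1 counting on these internal triangles together with the $k$ puncture cycles of punctures inside $D$ forces an admissible cut to contain exactly $T_D^{\mathrm{int}}-k=k-1$ degree-$1$ arrows at arc--arc corners at $b$ (or at $b,b'$) inside $D$. But the construction produces only $2$ such arc--arc corners in case (i) (the two ends of $\alpha$ in the triangle using $\alpha$) and \emph{none} in case (ii) (the only corner at $b$ inside $D$ is between $\alpha$ and a boundary segment). This contradicts $k-1>2$ in case (i), i.e.\ $k\geq 4$, and $k-1>0$ in case (ii), i.e.\ $k\geq 2$.

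\textbf{Main obstacle.} The hard case will be the surfaces not covered by (i)--(ii): those with exactly one puncture, and those with at most three punctures lying in components that carry only one boundary marked point. For these the direct disk computation degenerates, since the predicted number $k-1$ of forced arrows at boundary corners inside $D$ is already non-positive. Treating them uniformly will probably require either iterating the disk construction with several nested loops (each producing its own local defect that accumulates) or exploiting different local configurations — for instance self-folded triangles at valency-$1$ punctures, whose contribution to $\Stau$ has a special form and produces its own constraint. A secondary technical point is to verify that extending $\triangtau|_D$ to $\surface\setminus D$ can be done without introducing new arcs at $b$ or $b'$ on the $D$-side, which would supply extra arc--arc corners and spoil the count.
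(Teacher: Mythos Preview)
Your first bullet is correct and essentially the paper's argument, only packaged via Euler's formula rather than the paper's ``induction on the number of punctures'': both amount to showing that on a closed surface (other than the $4$-punctured sphere) the number of triangles strictly exceeds the number of punctures, which is exactly the inequality $|\mathbb{B}|<|\mathbb{W}|$ the paper invokes together with its matching criterion (there $\mathbb{G}=\varnothing$). Your double-count $\sum d(\alpha)=|T|=|\punct|$ is a clean way to phrase the obstruction.

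Your second bullet, however, is headed in a direction the paper does not take, and it has real problems. First, a clarification: the paper's Proposition actually carries the additional hypothesis that $\Sigma$ has \emph{positive genus}; the introductory statement you were given slightly overstates it. The paper's proof is then a one-line local obstruction having nothing to do with accumulating many punctures in a disk: on the once-punctured torus with one boundary component there is an explicit triangulation $\sigma$ containing \emph{two distinct internal triangles all of whose sides are loops based at the unique puncture $p$}. Any surface of positive genus with boundary and at least one puncture contains a copy of this piece, so one completes $\sigma$ to a triangulation $\tau$. The obstruction is immediate: each of the two triangles must contribute exactly one degree-$1$ arrow, but all six of their arrows lie in the single puncture cycle at $p$, which can carry only one degree-$1$ arrow. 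No disk counting, no case analysis.

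Your disk argument has two issues. The acknowledged gap (one puncture, or few punctures with a single boundary marked point) is precisely the case the paper handles effortlessly with its torus piece, so your suggested fixes (nested loops, self-folded contributions) are unnecessary once you know the right construction. More seriously, the constructions in your cases (i) and (ii) are not possible as you describe them: in case (i) the loop $\alpha$ cannot be the \emph{only} arc-end at $b$ inside $D$, since the triangle inside $D$ carrying $\alpha$ must have two further sides emanating from $b$; in case (ii) the arc $\alpha$ together with the boundary segment bounds a bigon, and triangulating it forces at least one further arc at $b$ or $b'$. So the corner counts you rely on (``only $2$'' in (i), ``none'' in (ii)) are not achievable, and the argument would need a more careful minimisation of the valency of $b$ inside $D$ --- which then weakens the resulting inequality. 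Even when repaired, this route still needs several punctures, whereas the paper's construction needs only one (but positive genus).
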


The paper is organized as follows.  In Section \ref{section2}, we recall the different notions of gradings and graded equivalences of algebras that we will need.  We also discuss mutation of graded quivers.  In Section \ref{section3}, we define a complex whose homology is isomorphic to that of the surface, and which will allow us to evaluate gradings at curves on the surface.  We also discuss graded flips of triangulations.  In Section \ref{section4}, we prove a certain compatibility between graded mutation of quivers with potentials and graded flips of triangulations. In Section \ref{section5}, we state and prove our main theorem on derived equivalences of surface cut algebras.  Section \ref{section6} is devoted to the existence (or not) of admissible cuts, and Section \ref{section7} to the computation of the global dimension of cut algebras.


\section{Gradings and mutation}\label{section2}


In this section we recall the notions of graded equivalence of graded algebras, graded mutation of graded quivers, and graded mutations of graded quivers with potential. Graded mutations of graded quivers will be the combinatorial counterpart of graded mutations of graded quivers with potential. We will see that if the quiver $Q$ of a graded quiver $(Q,d)$ admits a non-degenerate potential which is homogeneous of degree 1 with respect to $d$, then it is possible to apply to $(Q,d)$ any finite sequence of graded mutations of graded quivers without the need of mutating potentials along the way.

\subsection{Graded equivalence}

\begin{definition}\label{def:grading-on-a-quiver} Let $Q=(Q_0,Q_1,\head,\tail)$ be a quiver. A \emph{grading on $Q$} is any function $d:Q_1\rightarrow\ZZ$.
\end{definition}

Let $Q$ be a quiver and $d$ be a grading on $Q$. As it is customary, for a positive-length path $a_\ell\ldots a_1$ on $Q$ we define $d(a_\ell\ldots a_1)=d(a_\ell)+\ldots+d(a_1)$, and set $d(e_i)=0$ for every length-$0$ path $e_i$. Letting $\pathalg{Q}_{\ell}$ (resp. $\RA{Q}_{\ell}$) be the $\CC$-vector subspace that consists of the $\CC$-linear combinations (resp. the possibly infinite $\CC$-linear combinations) of paths that have a given degree $\ell\in\ZZ$ with respect to $d$, we see that $d$ induces a $\ZZ$-grading on the algebras
\begin{equation}\label{eq:grading-on-path-alg-and-complete-path-alg}
\pathalg{Q} = \bigoplus_{\ell\in\ZZ}\pathalg{Q}_{\ell} \ \ \ \text{and} \ \ \ \RA{Q} = \prod_{\ell\in\ZZ}\RA{Q}_{\ell}.
\end{equation}
We will say that $d$ \emph{is a grading on} $\pathalg{Q}$ and $\RA{Q}$, despite the fact that the decomposition of $\RA{Q}$ in \eqref{eq:grading-on-path-alg-and-complete-path-alg} is as a direct product and not as a direct sum.

\begin{remark} Let $Q$ be a quiver, $d$ a grading on $Q$, and $I$ an ideal of $\pathalg{Q}$ which is homogeneous with respect to $d$. Then the topological closure $\overline{I}$ of $I$ in $\RA{Q}$ is homogenous in the sense that $\overline{I}=\prod_{\ell}(\RA{Q}_{\ell}\cap\overline{I})$; hence there are decompositions
\begin{equation}\label{eq:gradings-on-quotients-of-path-alg-and-complete-path-alg}
\pathalg{Q}/I = \bigoplus_{\ell\in\ZZ}\left(\pathalg{Q}_{\ell}/(\pathalg{Q}_{\ell}\cap I)\right) \ \ \ \text{and} \ \ \ \RA{Q}/\overline{I} = \prod_{\ell\in\ZZ}\left(\RA{Q}_{\ell}/(\RA{Q}_{\ell}\cap\overline{I})\right),
\end{equation}
and the inclusion $\pathalg{Q}\hookrightarrow\RA{Q}$ induces a degree-preserving $\CC$-algebra homomorphism $\pathalg{Q}/I\rightarrow\RA{Q}/\overline{I}$. If $I$ is an admissible ideal of $\pathalg{Q}$ and this algebra homomorphism happens to be an isomorphism, then $\RA{Q}_{\ell}/(\RA{Q}_{\ell}\cap\overline{I})=0$ for $|\ell|\gg0$ and $\left(\pathalg{Q}_{\ell}/(\pathalg{Q}_{\ell}\cap I)\right)=\RA{Q}_{\ell}/(\RA{Q}_{\ell}\cap\overline{I})$ for all $\ell\in\ZZ$, hence the decomposition of $\RA{Q}$ given in \ref{eq:gradings-on-quotients-of-path-alg-and-complete-path-alg} becomes
\begin{equation*}
\RA{Q}/\overline{I} = \bigoplus_{\ell\in\ZZ}\left(\RA{Q}_{\ell}/(\RA{Q}_{\ell}\cap\overline{I})\right) = \bigoplus_{\ell\in\ZZ}\left(\pathalg{Q}_{\ell}/(\pathalg{Q}_{\ell}\cap\overline{I})\right).
\end{equation*}
Let $d'$ be a grading on $Q'$ and $I'$ be an ideal of $\pathalg{Q'}$ which is homogenous with respect to $d'$. In a slight abuse, we will say that $\varphi:(\RA{Q}/\overline{I},d)\to (\RA{Q'}/\overline{I'},d')$ is a morphism of graded algebras if $\varphi$ is an algebra morphism which sends homogenous elements of degree $\ell$ with respect to $d$ to homogenous elements of degree $\ell$ with respect to $d'$.
\end{remark}

\begin{definition}\label{def:graded-equiv-of-algebras} Let $Q=(Q_0,Q_1,\head,\tail)$ and $Q'=(Q'_0,Q'_1,\head',\tail')$ be quivers, $d:Q_1\rightarrow\ZZ$ (resp. $d':Q'_1\rightarrow\ZZ$) be a grading on $Q$ (resp. $Q'$), and $I$ (resp. $I'$) be an admissible ideal of $\pathalg{Q}$ (resp. $\pathalg{Q'}$), homogeneous with respect to the grading $d$ (resp. $d'$), so that $d$ (resp. $d'$) induces a grading on the quotient algebra $\Lambda=\pathalg{Q}/I$ (resp. $\Lambda'=\pathalg{Q'}/I'$), grading which we will still denote $d$ (resp. $d'$). The graded algebras $(\Lambda,d)$ and $(\Lambda',d')$ are said to be \emph{graded equivalent} if there exists a tuple of integers  $(r_i)_{i\in Q_0}$ and an isomorphism of graded algebras
$$
\Lambda'\underset{\ZZ}{\cong}\bigoplus_{p\in\ZZ}\Hom_{\gr \Lambda}\left(\bigoplus_{i\in Q_0}P_i (r_i),\bigoplus_{j\in Q_0}P_j( r_j+p)\right),
$$
where $P_i(r_i)$ is the indecomposable graded projective $\Lambda$-module associated to the vertex $i\in Q_0$ shifted by $r_i$. If $Q=Q'$ and $I=I'$ and if the isomorphism of algebras (forgetting the grading) is the identity, then we say that $\Lambda$ and $\Lambda'$ are \emph{graded equivalent via the identity}.
\end{definition}

Note that by \cite[Theorem 5.3]{GG}, $(\Lambda,d)$ and $(\Lambda',d')$ are graded equivalent if and only if there is an equivalence $\gr \Lambda\simeq \gr \Lambda'$ between the categories of finitely generated graded modules.

\begin{definition}\label{def:equivalence-of-quiver-gradings} Let $Q=(Q_0,Q_1,\head,\tail)$ be a quiver. Two gradings $d_1$ and $d_2$ on $Q$ are \emph{equivalent} if there exists a function $r:Q_0\rightarrow\ZZ$ such that $d_1(a)=d_2(a)+r(\head(a))-r(\tail(a))$ for every $a\in Q_1$.
\end{definition}

\begin{remark}\label{rema::graded-equiv} 
Two graded algebras $(\Lambda, d)$ and $(\Lambda', d')$ are graded equivalent if and only if there is an isomorphism of quivers $\phi:Q\to Q'$ of quivers sending $I$ to $I'$ and such that $d$ and $d'\circ \phi$ are equivalent gradings on $Q$.
\end{remark}


\subsection{Graded mutations of graded quivers}

In \cite[\S 6.3]{AO-cl-equiv-der-equiv}, Amiot-Oppermann refine Derksen-Weyman-Zelevinsky's definition \cite{DWZ1} of mutations of quivers with potential by defining a graded version. A careful look at the referred subsection of \cite{AO-cl-equiv-der-equiv} shows that if a 2-acyclic quiver $Q$ and a grading $d$ on it are fixed, then the sole knowledge of existence of a non-degenerate potential $S\in\RA{Q}$ of degree $1$ with respect to $d$ allows to perform any sequence of graded mutations on the pair $(Q,d)$ purely combinatorially, without having to apply the corresponding mutations at the level of potentials. The aim of this subsection is to explicitly describe graded quiver mutations as a combinatorial counterpart of graded mutations of graded QPs which should be thought to be analogous to how ordinary quiver mutations are the combinatorial counterpart of Derksen-Weyman-Zelevinsky's mutations of QPs.

\begin{definition}\cite{AO-cl-equiv-der-equiv}\label{def:mutation-of-graded-quiver}
Let $(Q,d)$ be a pair consisting of a 2-acyclic quiver and a grading $d:Q_1\rightarrow\ZZ$. For $k\in Q_0$, let $\widetilde{\mu}_k^L(Q,d):=(\widetilde{\mu}_k^L(Q),\widetilde{\mu}_k^L(d))$ be defined as follows:
\begin{enumerate}
\item $\widetilde{\mu}_k^L(Q)$ is the quiver obtained from $Q$ by applying the following two-step procedure:
\begin{itemize}
\item For each pair of arrows $j\overset{a}{\rightarrow}k\overset{b}{\rightarrow}i$, add a composite arrow $j\overset{[ab]}{\rightarrow}i$;
\item replace every arrow $a$ incident to $k$ with an arrow $a^*$ going in the opposite direction;
\end{itemize}
\item $\widetilde{\mu}_k^L(d)$ is the grading $\widetilde{d}$ on $\widetilde{\mu}_k^L(Q)$ defined by
\begin{itemize}
\item $\widetilde{d}(\alpha)=d(\alpha)$ for any arrow $\alpha\in Q_1\cap \widetilde{\mu}_k^L(Q)_1$;
\item $\widetilde{d}([ab])=d(a)+d(b)$ for every composite arrow $[ab]$ of $\widetilde{\mu}_k^L(Q)$;
\item $\widetilde{d}(a^*)=-d(a)$  for every arrow $a$ of $Q$ satisfying $t(a)=k$;
\item $\widetilde{d}(b^*)=1-d(b)$  for every arrow $b$ of $Q$ with the property that $h(b)=k$.
\end{itemize}
\end{enumerate}
If the quiver obtained from $\widetilde{\mu}_k^L(Q)$ by removing a maximal collection of disjoint degree-1 cycles of length 2 happens to be 2-acyclic, then we denote it $\mu_k^L(Q)$, and call the pair $\mu_k^L(Q,d):=(\mu_k^L(Q),\mu_k^L(d))$ the \emph{left graded mutation} of $(Q,d)$ with respect to $k$, where 
$\mu_k^L(d)$ is the grading on $\mu_k^L(Q)$ obtained by restricting $\widetilde{\mu}_k^L(d)$ to $\mu_k^L(Q)_1$.
\end{definition}

\begin{remark}\label{rmk:gradedmut} Note that if $\mu_k^L(Q)$ is defined, then:
\begin{enumerate}\item Up to isomorphism of graded quivers, the pair $\mu_k^L(Q,d):=(\mu_k^L(Q),\mu_k^L(d))$ is independent of the maximal collection of disjoint degree-1 cycles of length 2 removed from $\widetilde{\mu}_k^L(Q)$; 
\item $\mu_k^L(Q)$ is isomorphic to the quiver obtained from $Q$ by ordinary quiver mutation, therefore we denote it by $\mu_k(Q)$ in the rest of the paper; and
\item an easy computation shows that the gradings $d$ and $\mu_k^L\circ\mu_k^L(d)$ are equivalent as gradings on $Q=\mu_k\circ\mu_k(Q)$.
\end{enumerate}
\end{remark}

\begin{definition}\label{def:gradeQP} A \emph{graded quiver with potential} $(Q,S,d)$ (graded QP for short) is a quiver with potential $(Q,S)$ in the sense of \cite{DWZ1} together with a grading on $Q$ making $S$ homogenous of degree $1$. A \emph{graded  right equivalence} $\varphi:(Q,S,d)\to (Q',S',d')$ between two such graded QP with same vertex set $Q_0$ is a right equivalence $\varphi: (Q,S)\to (Q',S')$ in the sense \cite{DWZ1} which is an isomorphism of graded algebras $(\RA{Q},d)\to(\RA{Q'},d')$. We denote it by $(Q,S,d)\underset{\mathbb Z}{\cong}(Q',S',d')$. 
If $(Q,S,d)$ is a graded QP, the associated Jacobian algebra $\mathcal{P}(Q,S,d)$ (see \cite{DWZ1} for definition), inherits a natural grading.
\end{definition}

\begin{proposition}\cite{AO-cl-equiv-der-equiv}\label{prop:mutation-of-graded-QPs} 
Let $Q$ be a $2$-acyclic quiver and $(Q,S,d)$ be a graded QP. Then for any vertex $k\in Q_0$: 
\begin{enumerate}
\item the potential $\widetilde{\mu}_k(S)$ is homogeneous of degree 1 with respect to $\widetilde{\mu}_k^L(d)$;
\item\label{item:graded-QP-reduction-and-mutation} there exist potentials $W_{\operatorname{red}}$ and $W_{\operatorname{triv}}$ and  a graded right equivalence 
$$ \varphi: (\widetilde{\mu}^L_k(Q),\widetilde{\mu}_k(S),\widetilde{\mu}_k^L(d))\longrightarrow (\mu_k(Q),W_{\operatorname{red}},\widetilde{\mu}_k^L(d)_{|_{\mu_k(Q)}})\oplus (C,W_{\operatorname{triv}},\widetilde{\mu}_k^L(d)_{|_C}),$$  where $C$ is the subquiver of $\widetilde{\mu}_k^L(Q)$ such that $\widetilde{\mu}_k^L(Q)=\mu_k(Q)\oplus C$, and where  $(C,W_{\operatorname{triv}})$ is a trivial QP.

\end{enumerate}

\end{proposition}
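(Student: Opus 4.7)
The plan is to establish part (1) by a direct degree computation on the explicit formula for DWZ's premutated potential, and then to derive part (2) by checking that DWZ's reduction to trivial and reduced parts can be realized by a graded right equivalence.

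For (1), recall that
\[
\widetilde{\mu}_k(S) = [S] + \Delta_k, \qquad \Delta_k = \sum_{\substack{a, b \in Q_1 \\ h(a) = k = t(b)}} [ab]\, a^* b^*,
\]
where $[S]$ is obtained from $S$ by replacing every factor $ba$ with $h(a)=k=t(b)$ by the composite arrow $[ab]$. Each cyclic monomial of $[S]$ has $\widetilde{\mu}_k^L(d)$-degree equal to the $d$-degree of the corresponding monomial of $S$, because $\widetilde{d}([ab]) = d(a) + d(b) = d(ba)$ and arrows not incident to $k$ retain their original degree. Each summand $[ab]\, a^* b^*$ of $\Delta_k$ has degree
\[
(d(a)+d(b)) + (1-d(a)) + (-d(b)) = 1,
\]
using that $h(a) = k$ forces $\widetilde{d}(a^*) = 1 - d(a)$ and $t(b) = k$ forces $\widetilde{d}(b^*) = -d(b)$. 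Hence $\widetilde{\mu}_k(S)$ is homogeneous of degree $1$.

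For (2), I would follow the proof of DWZ's splitting theorem while insisting that every change of variable used is a graded algebra automorphism. The splitting is constructed iteratively: at each stage one selects a quadratic 2-cycle summand $\alpha\beta$ of the current potential, uses a unitriangular automorphism of the completed path algebra to absorb the associated higher-order corrections into a trivial summand, and proceeds until no such 2-cycle remains, passing to the $\mathfrak{m}$-adic limit. Because $\widetilde{\mu}_k(S)$ is homogeneous of degree $1$ by (1), every quadratic 2-cycle summand $\alpha\beta$ satisfies $\widetilde{d}(\alpha) + \widetilde{d}(\beta) = 1$, and the substitution $\alpha \mapsto \alpha + \xi$ needed to absorb the correction can be taken with $\xi$ homogeneous of degree $\widetilde{d}(\alpha)$. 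Consequently, each step of the procedure is a graded algebra automorphism, and the subquiver $C$ that is split off is generated by a maximal collection of disjoint degree-$1$ $2$-cycles, which is precisely the collection removed to define $\mu_k(Q)$.

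The main technical obstacle is to verify that this graded refinement can be made consistently at every step and is compatible with the $\mathfrak{m}$-adic convergence producing the limiting right equivalence. This uses that $\RA{\widetilde{\mu}_k^L(Q)}$ decomposes as a direct product of its homogeneous components, so that a Cauchy sequence of graded algebra automorphisms has a graded limit. This is exactly the refinement carried out by Amiot--Oppermann in \cite[\S 6.3]{AO-cl-equiv-der-equiv}, and our proof would either invoke their result directly or repeat the verification in the present setting.
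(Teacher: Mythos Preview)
Your proposal is correct and matches the paper's approach. The paper does not give its own proof of this proposition (it is cited from \cite{AO-cl-equiv-der-equiv}), but the subsequent remark explains that part~(2) is obtained precisely by carrying out the limit process of \cite[Lemmas 4.7 and 4.8]{DWZ1} in a graded fashion, which is exactly the verification you outline; your direct degree computation for part~(1) is the standard one.
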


\begin{definition} We denote by $\mu_k^L(Q,S,d)$ the graded QP $(\mu_k(Q),W_{\operatorname{red}},\widetilde{\mu}_k^L(d)_{|_{\mu_k(Q)}})$.
\end{definition}

As a consequence, if $(Q,S,d)$ is a $2$-acyclic graded QP, and $S$ is non degenerate we have $\mu_k^L(Q,S,d)=(\mu_k(Q),\mu_k(S),\mu_k^L(d))$.

We stress the fact that, given $(Q,d)$, the sole existence of a degree-1 non-degenerate potential on $Q$ guarantees the well-definedness of the pair $\mu_{k_\ell}^L\ldots\mu_{k_1}^L(Q,d):=(\mu_{k_\ell}\ldots\mu_{k_1}(Q),\mu_{k_\ell}^L\ldots\mu_{k_1}^L(d))$ for any finite sequence $(k_1,\ldots,k_\ell)$ of vertices of $Q$.

\begin{remark} Part \eqref{item:graded-QP-reduction-and-mutation} of Proposition \ref{prop:mutation-of-graded-QPs} is obtained by a suitable refinement of Derksen-Weyman-Zelevinsky's \cite[Theorem 4.6]{DWZ1}. Actually, one has $\widetilde{\mu}_k(Q)=\widetilde{\mu}_k^L(Q)$, and the potential $W_{\operatorname{red}}$ and the right-equivalence $\varphi$ above can be obtained from $(\widetilde{\mu}_k(Q),\widetilde{\mu}_k(S))$ precisely by applying the limit process described in the proofs of \cite[Lemmas 4.7 and 4.8]{DWZ1}. So, the potential $W_{\operatorname{red}}$ (and not only its right-equivalence class) is precisely the one that Derksen-Weyman-Zelevinsky obtain when they show that a \emph{reduced part} of $(\widetilde{\mu}_k(Q),\widetilde{\mu}_k(S))$ does exist indeed.
\end{remark}

The following two facts are then easy to check.
\begin{lemma}\label{lemma:d1_d2-equiv=>muL(d1)_muL(d2)-equiv} Let $Q$ be a 2-acyclic quiver, and let $d_1$ and $d_2$ be equivalent gradings on $Q$. Any potential $S$ on $Q$ which is homogeneous with respect to $d_1$ is also homogeneous with respect to $d_2$, and $d_1(S)=d_2(S)$. Consequently, the graded Jacobian algebras $\mathcal{P}(Q,S,d_1)$ and $\mathcal{P}(Q,S,d_2)$ are graded equivalent via the identity. 

Moreover, if $Q$ admits a non-degenerate potential which is homogeneous of degree 1 with respect to $d_1$, then for any vertex $k\in Q_0$, $\mu_k^L(d_1)$ and $\mu_k^L(d_2)$ are equivalent gradings on $\mu_k^L(Q)$ and, consequently, the graded Jacobian algebras $\mathcal{P}(\mu_k^L(Q),\mu_k^L(S),\mu_k^L(d_1))$ and $\mathcal{P}(\mu_k^L(Q),\mu_k^L(S),\mu_k^L(d_2))$ are graded equivalent via the identity.
\end{lemma}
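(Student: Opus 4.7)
The plan is to prove the lemma in three steps: first handling the homogeneity transfer for $S$, then the graded equivalence of Jacobian algebras via the identity, and finally the behavior under graded mutation.

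For the first claim, the key observation is that equivalent gradings agree on every cyclic path. Writing $d_1(a)=d_2(a)+r(\head(a))-r(\tail(a))$ for $a\in Q_1$, for a cyclic path $c=a_\ell\cdots a_1$ the sum
\[
d_1(c)-d_2(c)=\sum_{i=1}^\ell\bigl(r(\head(a_i))-r(\tail(a_i))\bigr)
\]
telescopes to $0$ because $\head(a_i)=\tail(a_{i+1})$ cyclically. Hence any potential homogeneous of degree $k$ with respect to $d_1$ is homogeneous of degree $k$ with respect to $d_2$, and in particular $d_1(S)=d_2(S)$.

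To obtain graded equivalence via the identity of $\mathcal{P}(Q,S,d_1)$ and $\mathcal{P}(Q,S,d_2)$, I would set $r_i:=r(i)$ for every $i\in Q_0$. The same telescoping argument, applied to a (not necessarily closed) path $c$ from $i$ to $j$, yields $d_1(c)=d_2(c)+r(j)-r(i)$. Consequently the $d_1$-degree-$(r_j+p-r_i)$ component of $e_j\Lambda e_i$ coincides with the $d_2$-degree-$p$ component of $e_j\Lambda e_i$, and summing over $i,j\in Q_0$ and $p\in\ZZ$, the identity map on $\Lambda$ supplies a graded algebra isomorphism
\[
\Lambda\underset{\ZZ}{\cong}\bigoplus_{p\in\ZZ}\Hom_{\gr\Lambda}\Bigl(\bigoplus_{i\in Q_0}P_i(r_i),\bigoplus_{j\in Q_0}P_j(r_j+p)\Bigr),
\]
which is exactly what Definition \ref{def:graded-equiv-of-algebras} requires.

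For the mutation half, I would verify case-by-case from Definition \ref{def:mutation-of-graded-quiver}(2) that the \emph{same} function $r:Q_0\to\ZZ$ witnesses the equivalence $\widetilde{\mu}_k^L(d_1)\sim\widetilde{\mu}_k^L(d_2)$ on $\widetilde{\mu}_k^L(Q)$. The four arrow types to check are: arrows not incident to $k$; composite arrows $[ab]$ coming from $j\overset{a}{\to}k\overset{b}{\to}i$; reversed arrows $a^*$ with $\tail(a)=k$; and reversed arrows $b^*$ with $\head(b)=k$. In each case a one-line calculation matches $\widetilde{\mu}_k^L(d_1)(\cdot)-\widetilde{\mu}_k^L(d_2)(\cdot)$ with the $r$-difference of the new head and tail in $\widetilde{\mu}_k^L(Q)$. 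The non-degeneracy hypothesis, combined with the first step, guarantees that $\mu_k^L(Q,d_i)$ is well-defined for $i=1,2$. Moreover, the cycle observation of the first step ensures that length-2 cycles of $\widetilde{\mu}_k^L(Q)$ have the same degree with respect to $\widetilde{\mu}_k^L(d_1)$ and $\widetilde{\mu}_k^L(d_2)$, so the \emph{same} maximal collection of degree-1 2-cycles may be removed to form $\mu_k(Q)$; the restricted gradings $\mu_k^L(d_1)$ and $\mu_k^L(d_2)$ then remain equivalent on $\mu_k(Q)$ via $r$. The graded equivalence via the identity of the mutated Jacobian algebras finally follows by applying the already-established first part to $(\mu_k(Q),\mu_k(S))$.

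I expect no serious obstacle; the content of the lemma is essentially the assertion that equivalence of gradings is a cycle-intrinsic condition, which survives both the passage to Jacobian algebras (since potentials are cyclic) and the combinatorial operations of graded mutation. The only point requiring care is the compatibility with the 2-cycle reduction in the definition of $\mu_k^L$, which is precisely why it pays to isolate the cyclic-path observation at the very start.
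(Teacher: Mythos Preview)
Your proposal is correct. The paper itself does not supply a proof of this lemma, merely prefacing it with ``The following two facts are then easy to check''; your argument is precisely the straightforward verification one would expect---the telescoping observation that equivalent gradings agree on cycles, followed by the case-check that the same function $r:Q_0\to\ZZ$ witnesses the equivalence on $\widetilde{\mu}_k^L(Q)$---and it fills in the omitted details without surprises.
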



\section{Triangulations and gradings}\label{section3}


In this section we construct two chain complexes $\Ctau{\bullet}$ and $\unredCtau{\bullet}$ and a CW-complex $X_\tau$ for each valency $\geq 2$-triangulation $\tau$.  The chain complexes $\Ctau{\bullet}$ and $\unredCtau{\bullet}$ will be defined in terms of $\tau$ and the potentials $S(\tau)$ and $\unredStau$, while the CW-complex $X_\tau$ is defined purely in terms of the triangulation $\tau$. The main features of $X_\tau$, $\Ctau{\bullet}$ and $\unredCtau{\bullet}$ will be that $X_\tau$ is a strong deformation retract of $\Sigma$ and that the cellular homology of $X_\tau$ is isomorphic to the homology of $\Ctau{\bullet}$, features that will allow us to associate invariants in $H_1(\Sigma,\mathbb{Z})$ and $H^1(\Sigma,\mathbb{Z})$ to objects defined in terms of any fixed $\tau$.

We then define graded flips of graded tagged triangulations and relate them to mutations of graded quivers. We end the section by discussing graded triangulations having isomorphic graded quivers.

Our treatment follows closely the one presented in \cite[Section 2]{Amiot-Grimeland} and \cite[Section 2]{Amiot-genus1}.
Similar constructions have been considered by Broomhead \cite{Broomhead-paper} and Mozgovoy-Reineke \cite{Mozgovoy-Reineke}.

For the basic notions of algebraic topology used throughout we refer the reader to \cite{Cohen} and \cite{Cooke-Finney}.

\subsection{The chain complexes $\Ctau{\bullet}$ and $\unredCtau{\bullet}$}\label{subsection3.1}

We follow \cite{FST}.  Throughout the paper, a \emph{surface with marked points} will be a pair $\surf$, where $\Sigma$ is a compact Riemann surface, possibly with boundary, and $\mathbb{M}$ is a finite subset of $\Sigma$.  Elements of $\mathbb{M}$ are called \emph{marked points}; when these lie in the interior of $\Sigma$, they are called \emph{punctures}.  We require that every connected component of the boundary of $\Sigma$ contains at least one marked point.  We also exclude the following cases:
\begin{itemize}
    \item a sphere without boundary and at most three punctures;
    \item a once-punctured monogon;
    \item an unpunctured disc with at most three marked points.
\end{itemize}

We use the same definitions of arcs, tagged arcs, ideal triangulations and tagged triangulations as in \cite{FST}.  For any tagged triangulation $\tau$, we denote by $\tau^\circ$ the corresponding ideal triangulation.

\begin{definition}
Let $\surf$ be a surface with marked points.  For any natural number $v$, a \emph{valency $\geq v$-triangulation} is an ideal triangulation such that all punctures have valency at least $v$.  A  \emph{valency $\geq v$-tagged triangulation} is a tagged triangulation $\tau$ such that $\tau^\circ$ is a \emph{valency $\geq v$-triangulation}.        
\end{definition}


Let $\tau$ be a valency $\geq 2$-triangulation.  Let $Q(\tau)$ and $S(\tau)$ be the quiver and the potential associated to the surface (cf. \cite[Definitions 8 and 23]{LF1}).  We respectively denote by $\Qtau_0$ and $\Qtau_1$ the vertex set and the arrow set of the quiver $\Qtau$. Let $\Qtau_2=\Qtau_2^+\cup\Qtau_2^-$, where $\Qtau_2^+$  is the set of $3$-cycles that appear in $\Stau$ and come from internal triangles of $\tau$, and $\Qtau^-$ is the set of cycles that appear in $\Stau$ and surround the punctures of $\surf$.

If we let $\unredQtau$ and $\unredStau$ be the unreduced quiver and the unreduced potential associated to $\surf$ (cf. \cite[Definitions 8 and 23]{LF1}), then we define in a similar fashion the sets $\unredQtau_0$, $\unredQtau_1$ and $\unredQtau_2^\pm$.  Note that we always have $\Qtau_0=\unredQtau_0$; however, if some puncture has valency 2 with respect to $\tau$, then $\Qtau_1\varsubsetneq \unredQtau_1$, $\Qtau_2^+\varsubsetneq\unredQtau_2^+$ and $\Qtau_2^-\nsubseteq\unredQtau_2^-$.

For each integer $n$, let $\Ctau{n}$ and  $\unredCtau{n}$ be the abelian groups defined by
$$
\Ctau{n} =\begin{cases}
\text{The free abelian group with basis $\Qtau_n$}  & \text{if $n\in\{0,1,2\}$};\\
0 & \text{if $n\notin\{0,1,2\}$}.
\end{cases}
$$
and the same for $\unredCtau{n}$, replacing $\Qtau_n$ by $\unredQtau_n$. Notice that $\Ctau{0}=\unredCtau{0}$ since $\Qtau_0=\unredQtau_0$.

For each integer $n$, let $\partial_n:\Ctau{n}\rightarrow \Ctau{n-1}$  be the group homomorphism defined by
\begin{eqnarray}
\nonumber
\partial_0   &=& 0;\\
\nonumber
\partial_1(a)  &=& i-j \ \ \ \ \  \ \ \ \ \ \ \ \ \ \ \ \ \ \ \ \ \  \text{if $a\in\Qtau_1$, $a:j\rightarrow i$;}\\
\nonumber
\partial_2(\xi)  &=& a_1+a_2+\ldots+a_\ell \ \ \ \ \  \text{if $\xi=a_1a_2\ldots a_\ell\in\Qtau_2$;}\\
\nonumber
\partial_n  &=&0 \ \ \ \ \ \ \ \  \ \ \ \ \ \ \ \ \ \ \ \ \ \ \ \ \ \ \  \text{if $n\notin\{0,1,2\}$.}
\end{eqnarray}
Define the morphism $\widehat{\partial}_{n}:\unredCtau{n}\rightarrow\unredCtau{n-1}$ in a similar way.

It is straightforward to verify that $\partial_{n-1}\circ \partial_n=0$  and $\widehat{\partial}_{n-1}\circ\widehat{\partial}_n=0$ for every $n$. This means that $(\Ctau{\bullet},\partial_\bullet)=((\Ctau{n})_{n\in\ZZ},(\partial_{n})_{n\in\ZZ})$ and $(\unredCtau{\bullet},\widehat{\partial}_\bullet)=((\unredCtau{n})_{n\in\ZZ},(\widehat{\partial}_{n})_{n\in\ZZ})$ are chain complexes of abelian groups. Notice that $\Ctau{\bullet}$ is not a subcomplex of $\unredCtau{\bullet}$ if some puncture has valency 2 with respect to $\tau$.

For $n\in\ZZ$, let $\varphi_n:\unredCtau{n}\rightarrow\Ctau{n}$ be the group homomorphism defined by
\begin{eqnarray}
\nonumber \varphi_0 & = & \myid_{\Ctau{0}}\\
\label{eq:hatCtau_1->Ctau_1}\varphi_1(\alpha) & = & \begin{cases}
\alpha & \text{if the marked point associated to $\alpha$ is not a puncture}\\
 & \text{of valency 2;}\\
\varepsilon_\alpha+\eta_\alpha & \text{otherwise, where we use the notation from Figure \ref{Fig:valency2_arrow_notation}.}
\end{cases}\\
\nonumber
\varphi_2(\xi)&=&\begin{cases}
\xi & \text{if $\xi$ is already a term of $\Stau$};\\
S^p(\tau) & \text{if $\xi=\widehat{S}^\triangle(\tau)$ for a triangle $\triangle$ containing}\\
 & \text{a puncture of valency 2, being $p$ such puncture};\\
S^p(\tau)  & \text{if $\xi=\widehat{S}^p(\tau)$ for a puncture $p$ of valency 2}.
\end{cases}\\
\nonumber
\varphi_n & = & 0 \ \ \ \text{if $n\notin\{0,1,2\}$.}
\end{eqnarray}
\begin{figure}[!h]
                \caption{$\varphi_1(\alpha)=\varepsilon_\alpha+\eta_\alpha$ if the marked point associated to $\alpha$ is a puncture of valency 2}\label{Fig:valency2_arrow_notation}
                \centering
                \includegraphics[scale=.5]{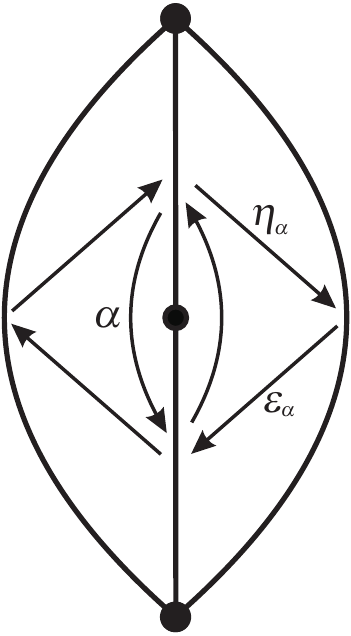}
        \end{figure}

The proof of following lemma is an easy exercise.

\begin{lemma}\label{lemma:phi-unredCtau->Ctau-is-homotopy-equivalence} The collection $\varphi_\bullet=(\varphi_n)_{n\in\ZZ}$ is a homotopy equivalence $\unredCtau{\bullet}\rightarrow\Ctau{\bullet}$.
\end{lemma}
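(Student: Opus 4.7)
The plan is to exhibit $\varphi_\bullet$ as a degreewise surjective chain map whose kernel is acyclic. Since both $\unredCtau{\bullet}$ and $\Ctau{\bullet}$ are bounded complexes of finitely generated free abelian groups, any quasi-isomorphism between them is automatically a chain-homotopy equivalence, so once these properties are established the short exact sequence $0 \to K_\bullet \to \unredCtau{\bullet} \xrightarrow{\varphi_\bullet} \Ctau{\bullet} \to 0$ with $K_\bullet = \ker \varphi_\bullet$ yields the desired result via the long exact sequence in homology.

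First I verify the chain-map identity $\partial_n \circ \varphi_n = \varphi_{n-1} \circ \widehat{\partial}_n$ for $n \in \{1,2\}$. On generators not involving a valency-$2$ puncture, $\varphi_n$ is the identity and the equality is immediate. The only non-trivial verifications occur near a valency-$2$ puncture $p$. In degree $1$, for an arrow $\alpha \in \unredQtau_1$ with $\alpha : j \to i$ associated to $p$, Figure \ref{Fig:valency2_arrow_notation} presents $\varepsilon_\alpha$ and $\eta_\alpha$ as a directed path from $j$ to $i$ in $Q(\tau)$, whence $\partial_1(\varepsilon_\alpha + \eta_\alpha) = i - j = \widehat{\partial}_1(\alpha)$. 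In degree $2$, one uses the explicit description from \cite{LF1} of how Derksen-Weyman-Zelevinsky reduction collapses the unreduced cells $\widehat{S}^p(\tau)$ and $\widehat{S}^\triangle(\tau)$ (for each triangle $\triangle$ of $\tau$ containing $p$ as a vertex) into the single reduced cell $S^p(\tau)$, and checks case by case that $\varphi_1(\widehat{\partial}_2(\xi)) = \partial_2(S^p(\tau))$ for each such $\xi$. This amounts to a finite combinatorial comparison of the two lists of arrows.

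Surjectivity of $\varphi_\bullet$ is then clear: every arrow of $\Qtau_1$ already lies in $\unredQtau_1$ and is hit by itself, and each $S^p(\tau) \in \Qtau_2^-$ is the $\varphi_2$-image of $\widehat{S}^p(\tau)$. Since $\varphi_0 = \operatorname{id}$ one has $K_0 = 0$, and by the local nature of the formulas defining $\varphi_\bullet$ the kernel $K_\bullet$ decomposes as a direct sum of small local subcomplexes, one per valency-$2$ puncture $p$. Each local summand is generated, in degree $1$, by the unreduced arrows of the $2$-cycle around $p$ (corrected so as to lie in $\ker \varphi_1$) and, in degree $2$, by the differences among the unreduced cells $\widehat{S}^p(\tau)$ and $\widehat{S}^\triangle(\tau)$ that all collapse to $S^p(\tau)$. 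The main combinatorial work — and the step I expect to require the most care, notwithstanding the authors' ``easy exercise'' classification — is the explicit verification that this small local complex is acyclic; once established, it propagates to acyclicity of $K_\bullet$ and the argument concludes as outlined above.
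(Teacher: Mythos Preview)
Your approach is correct, and since the paper leaves this as an ``easy exercise'' there is no detailed proof to compare against. The strategy---surjective chain map with acyclic kernel, hence quasi-isomorphism, hence homotopy equivalence between bounded complexes of finitely generated free abelian groups---is sound.

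Two remarks. First, your concern about the difficulty of the local acyclicity check is misplaced: once you write out the local kernel at a valency-$2$ puncture $p$, it is concentrated in degrees $1$ and $2$, each of rank $2$, with basis $\{\widehat{S}^{\triangle_1}-\widehat{S}^p,\ \widehat{S}^{\triangle_2}-\widehat{S}^p\}$ in degree $2$ and $\{a_2-b_1-c_1,\ a_1-b_2-c_2\}$ in degree $1$; the restriction of $\widehat{\partial}_2$ sends the first basis bijectively (up to sign) onto the second, so the local complex is visibly contractible. The chain-map check in degree $2$ is the same computation read backwards. Second, an even more direct route---likely what the authors have in mind---is to write down an explicit homotopy inverse: the obvious inclusion $\iota:\Ctau{\bullet}\hookrightarrow\unredCtau{\bullet}$ (with $\iota_2(S^p(\tau))=\widehat{S}^{\triangle_1}+\widehat{S}^{\triangle_2}-\widehat{S}^p$) satisfies $\varphi\circ\iota=\operatorname{id}$, and the homotopy $h_1$ sending $a_1\mapsto \widehat{S}^p-\widehat{S}^{\triangle_2}$, $a_2\mapsto \widehat{S}^p-\widehat{S}^{\triangle_1}$, and all other arrows to $0$, witnesses $\iota\circ\varphi\simeq\operatorname{id}$. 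Your route and this explicit one are equivalent in content; yours trades a short formula for a clean structural argument.
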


\subsection{The CW-complex $X_{\triangtau}$}

Our aim in this subsection is to realize $(C_\bullet(\triangtau),\partial_\bullet)$ as the cellular chain complex of certain CW-complex whose geometric realization is homeomorphic to a strong deformation retract of $\surface$. 

Suppose $\triang$ is a triangle of $\triangtau$ that contains at least one arrow of the quiver $\Qtau$. Then $\triang$ contains either 1 or 3 arrows of $\Qtau$. We associate to $\triang$ an unpunctured polygon $\surftriang$, together with a labeling of the sides of $\surftriang$, as follows. If $\triang$ contains exactly one arrow $a\in\Qtau_1$, then two of the sides of $\triangle$ are arcs in $\tau$ and the remaining side is a boundary segment. We set $\surftriang$ to be an unpunctured square. As for the side labeling, we pick a side of $\surftriang$ and label it with the arrow $a$, then label the adjacent sides with the arcs $\tail(a)$ and $\head(a)$ in such a way that the label $t(a)$ precedes the label $a$ and the label $a$ precedes the label $h(a)$ in the clockwise direction defined by the orientation of $\surftriang$; the remaining side of $\surftriang$ is then labeled with the boundary segment of $\surface$ that is contained in $\triang$. See Figure \ref{Fig:CW-complex-Xtau}.

If $\triang$ contains exactly three arrows $a,b,c\in\Qtau_1$, then all three sides of $\triangle$ are arcs in $\tau$. We set $\surftriang$ to be an unpunctured hexagon. As for the side labeling, suppose that $\tail(a)=\head(b)$ and $\tail(b)=\head(c)$, then pick a side of $\surftriang$ and label it with the arrow $a$, then label the remaining sides with the arcs and arrows $\head(a)$, $\tail(a)$, $b$, $\tail(b)$ and $c$, in such a way that, according to the orientation of $\surftriang$, the clockwise appearance of the six labels is $(\tail(a),a,\head(a),c,\tail(b),b)$ (up to cyclic permutation). See Figure \ref{Fig:CW-complex-Xtau}.

Now, suppose $p$ is a puncture of $\surf$. We associate to it an unpunctured polygon $\surfpunct{p}$, together with a labeling of the sides of $\surfpunct{p}$, as follows. Let $\xi=a_1\ldots a_{\ell_p}$ be the summand of $\Stau$ that runs around $p$. Set $\surfpunct{p}$ to be an unpunctured $p$-gon. As for the side labeling, the labels are precisely the arrows $a_1,\ldots,a_{\ell_p}$, and they are placed in such a way that, according to the orientation of $\surftriang$, the counterclockwise appearance of the $\ell_p$ labels is $(a_1,a_2,\ldots,a_{\ell_p})$ (up to cyclic permutation). See Figure \ref{Fig:CW-complex-Xtau}.

The set $\mathcal{S}=\{\surftriang\suchthat\triang$ is a triangle of $\tau$ containing at least one arrow of $\Qtau \}\cup \{\surfpunct{p}\suchthat p\in\punct\}$, together with the side labelings of its elements, yields a 2-dimensional CW-complex in an obvious way. 

\begin{definition}
The CW-complex $X_{\tau}$ is the 2-dimensional CW-complex defined above.
\end{definition}

Namely, there is a 2-cell for each element of $\mathcal{S}$, a 1-cell for each element of $\Qtau_0\cup\Qtau_1\cup\{s\suchthat s$ is a boundary segment of $\surf$ and a side of a triangle that contains exactly one arrow of $\Qtau\}$, and two 0-cells for each element of $\Qtau_0$. The attaching maps are defined by identifying pairs of sides of elements of $\mathcal{S}$ that are labeled by the same element of $\Qtau_0\cup\Qtau_1$.

Let us be more precise (and careful) about the way the identification of pairs of sides with the same label is done. Orient each side of each element of $\mathcal{S}$ in a clockwise manner (according to the orientation of the corresponding element of $\mathcal{S}$). If $i$ and $j$ are sides of elements of $\mathcal{S}$ that have the same label, then we glue $i$ and $j$ in such a way that traversing $i$ according to the orientation we have fixed for $i$ corresponds to traversing $j$ according to the orientation which is opposite to the one we have fixed for $j$.

\begin{figure}\caption{The CW complex $X_\tau$}\label{Fig:CW-complex-Xtau}
\[\scalebox{1}{
\begin{tikzpicture}[>=stealth,scale=0.6]

\fill[gray!50] (-1,-1)--(-1,9)--(9,9)--(9,-1)--cycle;
\draw[thick, fill=white] (0,0)--(0,8)--(8,8)--(8,0)--cycle;

\draw[thin, fill=white] (3,3)--(3.33,4)--(4.8,4)--(5,3)--(4,2)--cycle;
\draw[thin, fill=white] (1.33,5)--(1.5,6.5)--(3,6.33)--(2.67,5)--cycle;

\draw[thin, fill=blue!50] (1,2.25)--(1,3)--(0.67,4)--(0.5,7.5)--(6,7.33)--(7.2,7)--(7,3)--(7,2.25)--(5,0.75)--(4,1)--(3,0.75)--cycle;

\draw[thick, fill=blue!70] (1.3,5)--(2.7,5)--(3,6.3)--(1.5,6.5)--(1.3,5);
\draw[thick, fill=blue!70] (4,2)--(5,3)--(5,4.2)--(3.3,4)--(3,3)--(4,2);

\node (A) at (0,0) {$\bullet$};
\node (B) at (4,0) {$\bullet$};
\node (C) at (8,0) {$\bullet$};
\node (D) at (0,3) {$\bullet$};
\node (E) at (4,3) {$\bullet$};
\node (F) at (8,3) {$\bullet$};
\node (G) at (2,6) {$\bullet$};
\node (H) at (0,8) {$\bullet$};
\node (I) at (8,8) {$\bullet$};

\draw[thick] (D)--(B)--(F)--(E)--(I)--(G)--(E)--(D)--(G)--(H);
\draw[thick] (B)--(E);

%
%
%
%
%
%
%
%

\end{tikzpicture}}
\]
\end{figure}
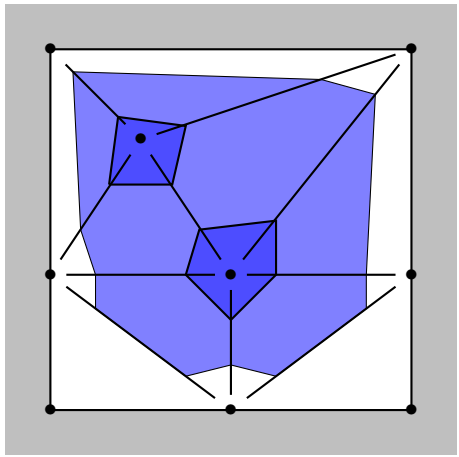

We will write $C_\bullet(X_\tau)$ to denote the cellular chain complex of the CW-complex $(X_\tau,(X_\tau^{(\ell)})_{\ell\geq 0})$ over $\ZZ$. We shall calculate $C_{\bullet}(X_\tau)$ explicitly. Our calculation will take a particularly simple form due to the fact that $(X_\tau,(X_\tau^{(\ell)})_{\ell\geq 0})$ is a regular CW-complexes. Our reference for the calculation we are about to make is \cite{Cooke-Finney} (more specifically, Definition II.1.9 and page 35 in loc. cit.).

For each non-negative integer $n$, $C_{n}(X_\tau)$ is the free abelian group with basis $X_\tau^{(n)}$.

Each of the $1$-cells in $X_\tau^{(1)}$ has been assigned an orientation. This assignment of orientations provides each $1$-cell $e^1$ with a well-defined initial $0$-cell $\initial(e^1)$ and a well-defined terminal $0$-cell $\terminal(e^1)$. Furthermore, given a $2$-cell $e^2$ and a $1$-cell $e_1$, we  define the integer
$$
[e^2:e^1]=\begin{cases}
1 & \text{if $e^1\subseteq \overline{e^2}$ and the orientation assigned to $e^1$ coincides with}\\
& \text{the clockwise orientation around $e^2$;}\\
-1 & \text{if $e^1\subseteq \overline{e^2}$ and the orientation assigned to $e^1$ coincides with}\\
& \text{the counterclockwise orientation around $e^2$;}\\
0 & \text{if $e_1$ is not contained in $\overline{e_2}$.}
\end{cases}
$$
The boundary maps of the cellular chain complex $C_{\bullet}(X_\tau)$ is then given by the rules
\begin{eqnarray*}
\partial_0 & = & 0\\
\partial_1(e_1) &=& \terminal(e^1)-\initial(e^1)\\
\partial_2(e_2) &=& \sum\limits_{e^1\in X_\tau^{(1)}}[e^2:e^1]e^1.
\end{eqnarray*}

Let $\psi_n:C_n(X_\tau)\rightarrow C_n(\tau)$, $n\in\{0,1,2\}$, be the group homomorphisms defined on the corresponding bases by
\begin{eqnarray*}
\psi_0(e^0)&=&\text{the unique element in $Q(\tau)_0=\tau$ containing $e^0$;}\\
\psi_1(e^1)&=&\begin{cases}
a & \text{if $e^1$ is parallel to the arrow $a\in Q(\tau)_1$ and}\\
& \text{it is oriented in the same direction as $a$;}\\
-a & \text{if $e^1$ is parallel to the arrow $a\in Q(\tau)_1$ and}\\
&  \text{it is oriented in the direction opposite to that of $a$;}\\
0 & \text{if $e^1$ is contained in an arc of $\tau$;}
\end{cases}\\
\psi_2(e^2)&=& \begin{cases}
\text{the term of $S(\tau)$ arising from $e^2$} & \text{if $e^2=e^2_\triangle$ for some interior triangle $\triangle$ of $\tau$}\\
& \text{or $e^2=e^2_p$ for some puncture $p$;}\\
0 & \text{if $e^2=e^2_\triangle$ for some non-interior triangle}\\
&\text{$\triangle$ of $\tau$}.
\end{cases}
\end{eqnarray*}
Setting $\psi_n=0$ for $n>2$, a routine check shows that $\psi_\bullet=(\psi_n)_{n\in\mathbb{Z}_\geq 0}$ is a morphism of chain complexes $C_\bullet(X_\tau)\rightarrow C_\bullet(\tau)$.

\begin{proposition}\label{prop:C(Xtau)->C(tau)-isomorphism-in-homology} The maps induced by $\psi_\bullet$ on homology are isomorphisms. That is, for all $n\in\mathbb{Z}$, the group homomorphism $\overline{\psi}_n:H_n(C_\bullet(X_\tau))\rightarrow H_n(C_\bullet(\tau))$ given by $\overline{\psi}_n[\zeta]=[\psi_n(\zeta)]$ is an isomorphism.
\end{proposition}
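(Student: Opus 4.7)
The plan is to reduce the statement to the acyclicity of $K_\bullet:=\ker\psi_\bullet$: once I verify $\psi_\bullet$ is surjective in each degree, the long exact sequence associated to
\[
0\longrightarrow K_\bullet\longrightarrow C_\bullet(X_\tau)\xrightarrow{\psi_\bullet}\Ctau{\bullet}\longrightarrow 0
\]
shows that $\overline\psi_n$ is an isomorphism for all $n$ if and only if $H_n(K_\bullet)=0$. Surjectivity is clear on generators: each arc is the image of its two attached $0$-cells, each arrow is (up to sign) the image of its attached $1$-cell, and the basis of $\Ctau{2}$ is in bijection under $\psi_2$ with the union of the interior-triangle $2$-cells and the puncture $2$-cells.

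The crucial structural observation is that in each non-interior (i.e.\ $1$-arrow) triangle $\triangle$, the boundary-segment side $s_\triangle$ of the square $\Sigma_\triangle$ is geometrically parallel to the unique arrow side $a_\triangle$ but carries the opposite induced orientation, forcing $\psi_1(e^1_{a_\triangle}+e^1_{s_\triangle})=0$. Writing $\delta_i:=e^0_{i,+}-e^0_{i,-}$, one obtains free $\mathbb Z$-bases: $K_0$ has basis $\{\delta_i\}_i$; $K_1$ has basis $\{e^1_i\}_i\cup\{e^1_{a_\triangle}+e^1_{s_\triangle}\}_\triangle$; and $K_2$ has basis $\{e^2_\triangle\}_\triangle$, where $\triangle$ ranges over non-interior triangles. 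The Euler characteristic of $K_\bullet$ vanishes.

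Computing the differentials on this basis, inspection of the attaching maps gives $\partial_1 e^1_i=\pm\delta_i$, so $\partial_1$ surjects onto $K_0$, and shows that $\partial_2 e^2_\triangle$ is a combination of $e^1_{t(a_\triangle)}$, $e^1_{a_\triangle}+e^1_{s_\triangle}$, and $e^1_{h(a_\triangle)}$ in which the coefficient of the middle element is $\pm 1$. Since distinct non-interior triangles index pairwise distinct basis vectors of this form, the columns of the matrix of $\partial_2$ in the chosen basis each carry a $\pm 1$ in a different row, whence $\partial_2$ is injective. Combined with the rank identity $\mathrm{rank}(K_2)=\mathrm{rank}(\ker\partial_1)$, this forces $\mathrm{image}(\partial_2)=\ker(\partial_1)$, and $K_\bullet$ is acyclic.

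The main obstacle is the sign/orientation bookkeeping implicit throughout: one must verify, by a case analysis using the clockwise orientation conventions fixed on the polygons $\Sigma_\triangle$ and $\Sigma_p$ and the reverse-orientation gluing rule, both that $\psi_1(e^1_{s_\triangle})=-\psi_1(e^1_{a_\triangle})$ and that each of the displayed boundary coefficients is $\pm 1$ rather than $0$ or $\pm 2$. Once these signs are pinned down, the acyclicity of $K_\bullet$ and hence the proposition follow.
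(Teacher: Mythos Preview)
The paper itself does not give a detailed proof here and simply refers to \cite[Lemma~2.3]{Amiot-Grimeland}; your explicit kernel-acyclicity argument is along the same lines as that reference and is a perfectly reasonable way to proceed. Your identification of free bases for $K_0$, $K_1$, $K_2$ and the shape of the differentials on them is correct, and you are right to flag the orientation bookkeeping as the routine part still to be checked.

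There is, however, one genuine gap in the logic. The inference ``$\partial_2$ is injective and $\operatorname{rank}(K_2)=\operatorname{rank}(\ker\partial_1)$, hence $\operatorname{image}(\partial_2)=\ker(\partial_1)$'' is not valid over $\mathbb{Z}$: an injective homomorphism between free abelian groups of equal finite rank need not be surjective (multiplication by $2$ on $\mathbb{Z}$). Your structural observation about the matrix of $\partial_2$ is in fact stronger than mere injectivity and is precisely what closes this gap, but you must use it more carefully. One clean way: since $\partial_1$ carries the basis $\{e^1_i\}$ bijectively (up to signs) onto the basis $\{\delta_i\}$ of $K_0$, the restriction $\partial_1|_{\operatorname{span}\{e^1_i\}}$ is an isomorphism, so the projection $K_1\twoheadrightarrow K_1/\operatorname{span}\{e^1_i\}$ restricts to an isomorphism on $\ker\partial_1$. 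Composing $\partial_2$ with this projection sends each $e^2_\triangle$ to $\pm[\,e^1_{a_\triangle}+e^1_{s_\triangle}\,]$, again a signed bijection of bases. Thus $\partial_2\colon K_2\to\ker\partial_1$ is itself an isomorphism, and $H_1(K_\bullet)=0$ follows without any appeal to a rank count. With this correction your argument is complete.
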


\begin{proof}
Esentially, the proof of \cite[Lemma 2.3]{Amiot-Grimeland} given by Amiot-Grimeland for unpunctured surfaces works here.
\end{proof}

\begin{corollary} Let $\tau$ be a valency $\geq 2$-triangulation of $\surf$. There is a canonical isomorphism of abelian groups between $H_1(C_\bullet(\triangtau),\partial_\bullet)$ and the singular homology group $H_1(\Sigma,\mathbb Z) $. Hence each choice of $\mathbb{Z}$-basis of $H_1(\Sigma,\mathbb Z)$ induces an isomorphism $H_1(C_\bullet(\triangtau),\partial_\bullet)\cong \mathbb Z^{2g+b-1}$, where $g$ is the genus of $\Sigma$ and $b$ is its number of boundary components.
\end{corollary}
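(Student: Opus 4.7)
The plan is to assemble the corollary from three isomorphisms: a combinatorial one already provided by Proposition \ref{prop:C(Xtau)->C(tau)-isomorphism-in-homology}, a standard ``cellular $=$ singular'' identification, and a topological deformation retraction $\Sigma \to X_\tau$. By Proposition \ref{prop:C(Xtau)->C(tau)-isomorphism-in-homology}, the chain map $\psi_\bullet$ induces a canonical isomorphism $H_n(C_\bullet(X_\tau))\xrightarrow{\sim} H_n(C_\bullet(\tau))$ for every $n$; and since $(X_\tau,(X_\tau^{(\ell)})_{\ell\geq 0})$ is a (regular) CW-complex, its cellular homology $H_n(C_\bullet(X_\tau))$ coincides with its singular homology $H_n(X_\tau,\mathbb Z)$. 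So the remaining task is to realize $X_\tau$ inside $\Sigma$ and show that $\Sigma$ strongly deformation retracts onto it.

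For this I would make the construction of $X_\tau$ given in Subsection 3.2 geometrically explicit. Each $2$-cell $\Sigma_\triangle$ is realized as a small polygonal region sitting inside the corresponding triangle $\triangle$ of $\tau$ (the shaded squares/hexagons in Figure \ref{Fig:CW-complex-Xtau}), each $2$-cell $\Sigma_p$ is realized as a small polygonal neighborhood of the puncture $p$, and each $1$-cell labelled by an arrow $a\in Q(\tau)_1$ is realized as a short segment crossing the arc $a$ of $\tau$ transversally; the attaching maps prescribed by the shared labels match the evident gluings occurring in $\Sigma$. One then defines the retraction $r:\Sigma\to X_\tau$ by crushing each arc of $\tau$ to the associated $0$-cell, collapsing a tubular neighbourhood of each arc onto the corresponding $1$-cell (transverse to the arc), and contracting the region of each triangle not already covered onto the appropriate $2$-cell; the homotopy to $\myid_\Sigma$ is built piecewise on each triangle and each small disk around a puncture. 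Verifying continuity and the retraction identities reduces, by compatibility along the gluing edges, to the evident claim that a polygon is a strong deformation retract of itself, together with the evident claim that an annular neighbourhood of a puncture retracts onto its inner boundary circle. This gives $H_1(X_\tau,\mathbb Z)\cong H_1(\Sigma,\mathbb Z)$ and hence the first assertion.

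For the second assertion, once the canonical isomorphism $H_1(C_\bullet(\tau),\partial_\bullet)\cong H_1(\Sigma,\mathbb Z)$ has been established, the rank formula is a standard fact: a compact orientable surface of genus $g$ with $b\geq 1$ boundary components is homotopy equivalent to a wedge of $2g+b-1$ circles (since the surface with boundary admits a spine, e.g.\ by cutting it open to obtain a polygon with identifications and collapsing a boundary disk), so $H_1(\Sigma,\mathbb Z)\cong\mathbb Z^{2g+b-1}$, and any choice of $\mathbb Z$-basis transports this isomorphism to $H_1(C_\bullet(\tau),\partial_\bullet)$.

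The main obstacle is the deformation retract step, which is conceptually clear from the picture but requires some care when $\tau$ contains triangles that contribute no $2$-cell to $X_\tau$ (triangles all of whose sides are boundary segments, or triangles adjacent to punctures of valency $2$). For such triangles one has to check by hand that they retract compatibly onto the $1$-cells lying on their boundary; under the valency $\geq 2$ hypothesis, ``self-folded'' configurations do not appear and these remaining cases are routine polygon retractions. Given that Amiot--Grimeland have already carried out the analogous argument in the unpunctured setting (\cite[Lemma~2.3]{Amiot-Grimeland}), I would expect to refer to their construction for the generic triangles and only treat in detail the neighbourhoods of punctures, where the new phenomena arise.
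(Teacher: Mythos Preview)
Your approach is correct and coincides with the paper's: the corollary is stated immediately after Proposition~\ref{prop:C(Xtau)->C(tau)-isomorphism-in-homology} without a separate proof, the authors having already announced at the start of the subsection that $X_\tau$ is to be realized as a strong deformation retract of $\Sigma$ (so the corollary is meant to follow directly from that fact, Proposition~\ref{prop:C(Xtau)->C(tau)-isomorphism-in-homology}, and the standard cellular--singular identification). Your sketch of the retraction, and your flagging of the triangles that contribute no $2$-cell and of the puncture neighbourhoods as the places requiring extra care, in fact supplies more detail than the paper itself does.
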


\subsection{Evaluating gradings at curves}


\begin{definition} Let $\tau$ be a valency $\geq 3$-triangulation. A closed curve $\gamma:[0,1]\rightarrow\Sigma\setminus(\mathbb{M}\cup\partial\Sigma)$ is \emph{$\tau$-admissible} if it satisfies the following contitions:
\begin{enumerate}
\item the intersection of $\gamma$ with each arc of $\tau$ is a finite set;
\item all the intersection points of $\gamma$ with any arc of $\tau$ are transversal crossings;
\item $\gamma$ does not cross any arc of $\tau$ twice in succession.
\end{enumerate}
\end{definition}

The proof of \cite[Lemma 2.3]{Amiot-genus1} can be easily adapted to show the following:

\begin{lemma}\label{lemma:nice-representative-curves} Let $\tau$ be a valency $\geq 3$-triangulation of $\surf$. Every closed curve $x$ on $\Sigma\setminus\mathbb{M}$ is freely homotopic to a $\tau$-admissible closed curve $\gamma_x$.
\end{lemma}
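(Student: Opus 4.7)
My plan is to establish conditions (1), (2), (3) in turn. Conditions (1) and (2) are achieved by a routine transversality argument; condition (3) is the substantive one, and I would prove it by passing to a hyperbolic geodesic representative. The valency $\geq 3$ hypothesis is used only to guarantee that every triangle of $\tau$ has three pairwise distinct sides, so that no triangle is self-folded.

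For (1) and (2), I would smooth $x$ (together with the arcs of $\tau$, which may be assumed smooth) and perturb it slightly inside $\Sigma\setminus\mathbb{M}$ so that it meets each arc of $\tau$ transversally. Since $\tau$ is a finite collection of arcs and $x$ is compact, standard transversality yields only finitely many intersection points, all of them transversal crossings. This step is entirely routine and requires no further comment.

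For (3), the local picture is the following: if $\gamma$ crosses an arc $a$ of $\tau$ at two successive points $p$ and $q$ with no other crossings of $\tau$ in between, the subarc $\gamma|_{[p,q]}$ lies in the closure of a single triangle $T$ of $\tau$ having $a$ as a side. Under the valency hypothesis the three sides $a,b,c$ of $T$ are distinct, so one can homotope $\gamma|_{[p,q]}$, with endpoints sliding along $a$, across $T$ to the vertex $v$ of $T$ opposite to $a$: if $v$ is a boundary marked point this cancels both crossings of $a$, while if $v$ is a puncture one pushes around $v$, trading the two $a$-crossings for one crossing each with $b$ and $c$. The main obstacle is that in the puncture case this move need not strictly reduce the total number of intersections, so iterating it is not obviously terminating.

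I would bypass this by choosing $\gamma_x$ once and for all as a geodesic representative. Under the standing hypotheses on $\surf$, one may equip $\Sigma\setminus\mathbb{M}$ with a complete finite-area hyperbolic structure, with cusps at the punctures and geodesic boundary at the boundary components, arranged so that every arc of $\tau$ is represented by a geodesic. Any essential free homotopy class of closed curves then contains a unique closed geodesic $\gamma_x$, and this geodesic realises the minimal geometric intersection number with each arc of $\tau$. If $\gamma_x$ had two successive crossings of a single arc $a$, say at $p,q$, then $\gamma_x|_{[p,q]}$ together with the subarc of $a$ between $p$ and $q$ would bound an embedded bigon in $\Sigma\setminus\mathbb{M}$; since two distinct geodesics on a hyperbolic surface never bound a non-trivial bigon, we get the required contradiction and $\gamma_x$ satisfies (3) as well as (1) and (2). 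The remaining free homotopy classes, namely the nullhomotopic class and the classes peripheral to a single marked point or boundary component, admit obvious representatives entirely disjoint from $\tau$ and are handled separately.
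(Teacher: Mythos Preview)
Your argument is essentially correct and takes a genuinely different route from the paper. The paper simply defers to an adaptation of \cite[Lemma~2.3]{Amiot-genus1}, whose proof is combinatorial: one iteratively removes successive double crossings by local homotopies, and the main work is a termination argument. You instead equip $\Sigma\setminus\mathbb{M}$ with a complete hyperbolic structure in which every triangle of $\tau$ is ideal and every arc is geodesic, and take the geodesic representative $\gamma_x$. This bypasses the termination issue entirely: a successive double crossing of an arc $a$ would force a geodesic segment in a single lifted ideal triangle $\tilde T\subset\mathbb{H}^2$ to meet a single geodesic side $\tilde a$ twice, contradicting the fact that two geodesics in $\mathbb{H}^2$ meet at most once. (Here the valency $\geq 3$ hypothesis is used exactly as you say, to ensure that only one side of $\tilde T$ covers $a$.) The combinatorial approach has the advantage of staying within elementary topology; yours is shorter and makes the role of the hypothesis transparent.

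Two small points of imprecision are worth fixing. First, in your bigon step you should note explicitly why the disc bounded by $\gamma_x|_{[p,q]}$ and $a|_{[p,q]}$ lies in $\Sigma\setminus\mathbb{M}$: all three vertices of the triangle $T$ lie on the complementary part of $\partial T$, hence in the other region, so the bigon contains no marked point. Second, your treatment of the peripheral classes is not quite right: a small loop around a puncture $p$ is \emph{not} disjoint from $\tau$, since it must cross every arc incident to $p$. It is nevertheless $\tau$-admissible, because consecutive crossings are with the two distinct sides of a triangle corner at $p$ (distinct by the valency $\geq 3$ hypothesis). Boundary-parallel curves, on the other hand, are essential in $\Sigma\setminus\mathbb{M}$ and are already covered by your main geodesic argument, so need not be handled separately.
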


For any $\tau$-admissible closed curve $\gamma$, define $\widetilde{\gamma}^\tau\in \unredCtau{1}=\mathbb{Z}\unredQtau_1$ to be the signed algebraic sum of the arrows traversed by $\gamma$. The proof of \cite[Lemma 2.5]{Amiot-genus1} can be adapted to show that the rule $x\mapsto\widetilde{\gamma_x}^\tau$ induces a well-defined map $\pi_1(\Sigma\setminus\mathbb{M})\rightarrow\mathbb{Z}\unredQtau_1$. 
We define
$$
\overline{(-)}^\tau=\varphi_1\circ\widetilde{(-)}^\tau:\pi_1(\Sigma\setminus\mathbb M)\to  \mathbb{Z}\Qtau_1,
$$
where $\varphi_1:\mathbb{Z}\unredQtau_1\rightarrow\mathbb{Z}\Qtau_1$ is the group homomorphism defined right before Lemma \ref{lemma:phi-unredCtau->Ctau-is-homotopy-equivalence}.

\begin{remark} The map $\pi_1(\Sigma\setminus\mathbb{M})\rightarrow\mathbb{Z}Q(\triangtau)_1$ just described may not be a group homomorphism. It does, however, factor through the set $\pi_1^{\textrm{free}}(\Sigma\setminus\mathbb{M})$ of free homotopy classes of closed curves.
\end{remark}

Let $\tau$ be a valency $\geq 2$-triangulation of $(\Sigma,\marked)$. We denote by $(C^\bullet(\triangtau),\delta^{\bullet})=((C^n(\triangtau))_{n\in\ZZ},(\delta^{n})_{n\in\ZZ})$ the cochain complex which is dual to $(C_\bullet(\triangtau),\partial_\bullet)$ over $\ZZ$. Thus, for every integer $n\in\ZZ$, the group $C^n(\triangtau)$ and the map $\delta^{n}:C^n(\triangtau)\rightarrow C^{n+1}(\triangtau)$ are defined by
$$
C^n(\triangtau)=\Hom_{\ZZ}(C_n(\triangtau),\ZZ) \ \ \ \text{and} \ \ \ \delta^n(f)=f\circ\partial_{n+1}.
$$

According to Definition \ref{def:equivalence-of-quiver-gradings}, a grading on $\Qtau$ is any function $d:\Qtau_1\rightarrow\ZZ$. Since $\Qtau_1$ is a $\ZZ$-basis of $C_1(\triangtau)$, any grading $d:\Qtau_1\rightarrow\ZZ$ induces a group homomorphism $C_1(\triangtau)\rightarrow\ZZ$, that is, an element of $C^1(\triangtau)$. In a clear but harmless abuse, we will denote this element of $C^1(\triangtau)$ by $d$ as well.

%
%
%

\begin{definition} Let $\tau$ be a tagged (resp. valency $\geq 2$-) triangulation and $d$ be a grading on $\Qtau$. We will say that $d$ is a \emph{degree-1 map} for $\tau$, or that the pair $(\tau,d)$ is a \emph{graded tagged triangulation} (resp. \emph{graded valency $\geq 2$-triangulation}), if the potential $S(\tau)$ defined in \cite[Definitions 3.1 and 3.2]{LF4} is homogeneous of degree 1 with respect to $d$.
\end{definition}

Notice that if $\tau$ is a valency $\geq 2$-triangulation, then a grading $d$ on $\Qtau$ is a degree-1 map if and only if $(d\circ\partial_2)(\xi)=1$ for every $\xi\in\Qtau_2$ (recall that $d\circ\partial_2=\delta^1(d):C_2(\triangtau)\rightarrow\ZZ$).

So, if $\tau$ is a valency $\geq 2$-triangulation, then the difference of two degree-1 maps vanishes on the image of $\partial_2$, which is the same as saying that the difference of any two degree-1 maps is a 1-cocycle in $C^\bullet(\tau)$. 

Recall that we have group isomorphisms $H^1(C^\bullet(\tau))\rightarrow \Hom_{\mathbb{Z}}(H_1(C_\bullet(\tau)),\mathbb{Z})$, $H_1(\Sigma)\rightarrow H_1(C_\bullet(\tau))$ and $\Hom_{\mathbb{Z}}(H_1(\Sigma),\mathbb{Z})\rightarrow H^1(\Sigma)$. If $d_1$ and $d_2$ are degree-1 maps, then we have a well-defined cohomology class $[d_1-d_2]^\tau\in H^1(C^\bullet(\tau))$, and we can think of this class as a group homomorphism $H_1(C_\bullet(\tau))\rightarrow\mathbb{Z}$, hence also as a group homomorphism $H_1(\Sigma)\rightarrow \mathbb{Z}$, which in turn corresponds to a cohomology class in $H^1(\Sigma)$. We will denote this cohomology class by $[d_1-d_2]_{H^1(\Sigma)}$.

\begin{lemma}\label{lemma:equivalence-of-gradings-characterization}  Let $\tau$ be a valency $\geq 2$-triangulation of $\surf$.
For two degree-1 maps $d_1$ and $d_2$ on $Q(\tau)$, the following are equivalent:
\begin{enumerate}
\item $d_1$ and $d_2$ are equivalent gradings on $Q(\tau)$;
\item $[d_1-d_2]^\tau=0$ as element in $H^1(C^\bullet(\tau))$;
\item $d_1(\overline{\gamma}^\tau)=d_2(\overline{\gamma}^\tau)$ for every $\tau$-admissible closed curve $\gamma$ on $\Sigma\setminus\mathbb M$;
\item there exists a set $\Gamma$ of $\tau$-admissible closed curves on $\Sigma\setminus\mathbb{M}$ such that:
\begin{enumerate}\item $\{[\overline{\gamma}^\tau]\suchthat \gamma\in\Gamma\}$ is a $\mathbb{Z}$-basis of $H_1(C_\bullet(\tau))$;
 \item $d_1(\overline{\gamma}^\tau)=d_2(\overline{\gamma}^\tau)$ for all $\gamma\in\Gamma$;
 \end{enumerate}
\item $[d_1-d_2]_{H^1(\Sigma)}=0$ as element in $H^1(\Sigma)$.
\end{enumerate}
\end{lemma}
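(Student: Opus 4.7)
The plan is to prove the implications (1) $\Leftrightarrow$ (2), (2) $\Leftrightarrow$ (5), and the cycle (2) $\Rightarrow$ (3) $\Rightarrow$ (4) $\Rightarrow$ (2), all of which leverage Proposition \ref{prop:C(Xtau)->C(tau)-isomorphism-in-homology} and the dualities it induces.

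The equivalence (1) $\Leftrightarrow$ (2) is a direct translation of definitions: a 0-cochain $r \in C^0(\tau) = \ZZ^{Q(\tau)_0}$ has coboundary $\delta^0(r)(a) = r(h(a)) - r(t(a))$, so two gradings are equivalent in the sense of Definition \ref{def:equivalence-of-quiver-gradings} precisely when their difference is a 1-coboundary, equivalently when the cohomology class $[d_1 - d_2]^\tau \in H^1(C^\bullet(\tau))$ vanishes. For (2) $\Leftrightarrow$ (5), Proposition \ref{prop:C(Xtau)->C(tau)-isomorphism-in-homology} yields a quasi-isomorphism $\psi_\bullet : C_\bullet(X_\tau) \to C_\bullet(\tau)$, and since $X_\tau$ deformation retracts onto $\Sigma$ away from the marked points, we obtain $H_1(C_\bullet(\tau)) \cong H_1(X_\tau) \cong H_1(\Sigma)$. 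Dualizing over $\ZZ$ via universal coefficients (all groups involved are finitely generated free abelian) yields $H^1(C^\bullet(\tau)) \cong H^1(\Sigma)$, and by the construction stated immediately before the lemma, the class $[d_1 - d_2]^\tau$ corresponds to $[d_1 - d_2]_{H^1(\Sigma)}$ under this isomorphism.

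For (2) $\Rightarrow$ (3), the essential point is that $\overline{\gamma}^\tau$ is a 1-cycle in $C_\bullet(\tau)$ for any $\tau$-admissible closed curve $\gamma$. Indeed, $\widetilde{\gamma}^\tau$ agrees with the image under the natural map $\psi_1^{\mathrm{unred}}$ of a cellular 1-cycle in (the unreduced variant of) $X_\tau$ that tracks how $\gamma$ winds through the 1-skeleton; applying $\varphi_1$, which is a chain map by Lemma \ref{lemma:phi-unredCtau->Ctau-is-homotopy-equivalence}, preserves the cycle condition, so $\partial_1 \overline{\gamma}^\tau = 0$. Hence if $d_1 - d_2 = \delta^0(r)$, then $(d_1 - d_2)(\overline{\gamma}^\tau) = r(\partial_1 \overline{\gamma}^\tau) = 0$. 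For (3) $\Rightarrow$ (4), combine Lemma \ref{lemma:nice-representative-curves} with the isomorphism $H_1(C_\bullet(\tau)) \cong H_1(\Sigma)$, a finitely generated free abelian group, to pick finitely many admissible closed curves whose classes $[\overline{\gamma}^\tau]$ realize a $\ZZ$-basis. Finally, for (4) $\Rightarrow$ (2): the difference $d_1 - d_2$ vanishes on $\operatorname{im}(\partial_2)$ since both $d_i$ are degree-1 maps, so it is a 1-cocycle and descends to a homomorphism $H_1(C_\bullet(\tau)) \to \ZZ$; vanishing on a basis forces this homomorphism, hence the class $[d_1 - d_2]^\tau$, to be zero.

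The main obstacle is the cycle claim used in (2) $\Rightarrow$ (3): one must check that the signed algebraic sum of arrows crossed by an admissible curve genuinely coincides with $\psi_1$ of a cellular 1-cycle in $X_\tau$, accounting for the orientation conventions fixed in Section \ref{subsection3.1} and for the correction $\varphi_1$ at valency-2 punctures encoded in Figure \ref{Fig:valency2_arrow_notation}. Once this combinatorial identification is verified, the remaining implications are formal consequences of the quasi-isomorphism from Proposition \ref{prop:C(Xtau)->C(tau)-isomorphism-in-homology} together with elementary homological algebra.
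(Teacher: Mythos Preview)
The paper states this lemma without proof, so there is no argument to compare against; your outline supplies exactly the kind of routine verification the authors evidently regarded as standard once the chain complex $C_\bullet(\tau)$, the quasi-isomorphism of Proposition~\ref{prop:C(Xtau)->C(tau)-isomorphism-in-homology}, and the evaluation pairing $H^1(C^\bullet(\tau))\cong\Hom_\ZZ(H_1(C_\bullet(\tau)),\ZZ)$ (recalled just before the lemma) are in place. Your chain of implications is the natural one and is correct.

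Two small points are worth tightening. First, the phrase ``$X_\tau$ deformation retracts onto $\Sigma$'' has the direction reversed: it is $\Sigma$ that retracts onto $X_\tau$. Second, the paper defines $\tau$-admissible curves and proves Lemma~\ref{lemma:nice-representative-curves} only for valency $\geq 3$-triangulations, whereas the present lemma is stated for valency $\geq 2$; your step (3)$\Rightarrow$(4) therefore inherits a mild expository gap already present in the paper. Finally, for (3)$\Rightarrow$(4) you need not only admissible representatives but also that $\gamma\mapsto[\overline{\gamma}^\tau]$ realises the isomorphism $H_1(\Sigma)\cong H_1(C_\bullet(\tau))$, so that a $\ZZ$-basis of the latter is hit; this is exactly the compatibility you flag as the ``main obstacle'' and follows from homotoping $\gamma$ into $X_\tau$ and applying $\overline{\psi}_1$.
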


\subsection{Graded flip}\label{subsec:graded-flip}


Let $(\triangtau,d)$ be a graded tagged triangulation. By \cite[Corollary 9.1]{LF4}, $(Q(\tau),S(\tau))$ is non-degenerate. Hence there is a well-defined grading $\mu_{k_\ell}^L\ldots\mu_{k_1}^L(d)$ on $\mu_{k_\ell}^L\ldots\mu_{k_1}^L(Q(\tau))=\mu_{k_\ell}\ldots\mu_{k_1}(Q(\tau))$ for any finite sequence $(k_1,\ldots,k_\ell)$ by Lemma \ref{prop:mutation-of-graded-QPs}.

\begin{definition}\label{defi:graded-flip} Let $(\triangtau,d)$ be a graded tagged triangulation. For $k\in\tau=\Qtau_0$ we call the pair $\flip_k(\tau,d):=(\flip_k(\tau),\mu_k^L(d))$ the (left) \emph{graded flip} of $(\tau,d)$ with respect to $k$, where $\flip_k(\tau)$ is the tagged triangulation obtained from $\tau$ by flipping the arc $k$.
\end{definition}

The next lemma is a version of \cite[Lemma 2.14]{Amiot-Grimeland} for punctured surfaces.

\begin{lemma}\label{lemma:graded-flip-preserves-evaluation}
Let $(\triangtau,d)$ be a graded valency $\geq 2$-triangulation, and let $i\in\tau$ be an arc such that $\flip_i(\tau)$ is a valency $\geq 2$-triangulation. For any closed curve $\gamma$ on $\Sigma\setminus \mathbb M$, we have
$$
d(\overline{\gamma}^\tau)=\mu_i^L(d)(\overline{\gamma}^{\flip_i(\tau)}).
$$
\end{lemma}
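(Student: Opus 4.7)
The plan is to reduce the lemma to a local combinatorial check near the flipped arc. First, invoking Lemma \ref{lemma:nice-representative-curves}, I would replace $\gamma$ by a freely homotopic representative that is simultaneously $\tau$-admissible and $\flip_i(\tau)$-admissible (a small perturbation handles the finitely many crossings of $\gamma$ with $i$ that might a priori coincide with the point $i\cap i^*$, where $i^*$ is the arc obtained by flipping $i$). Since $\widetilde{(-)}^\tau$ and $\widetilde{(-)}^{\flip_i(\tau)}$ decompose as additive sums over the successive arcs crossed by $\gamma$, and $\varphi_1$ is likewise linear on the basis of arrows, the desired equality $d(\overline{\gamma}^\tau) = \mu_i^L(d)(\overline{\gamma}^{\flip_i(\tau)})$ reduces to verifying the contribution of each connected subpath of $\gamma$ that meets the quadrilateral $P$ formed by the two triangles of $\tau$ sharing $i$.

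Next, I would exploit the fact that outside $P$ the arcs of the two triangulations coincide, and the gradings $d$ and $\mu_i^L(d)$ agree on the arrows of $\Qtau \cap Q(\flip_i(\tau))$ by part (2) of Definition \ref{def:mutation-of-graded-quiver}. So no discrepancy can arise from subpaths of $\gamma$ disjoint from $P$. For a subpath crossing $P$, the admissibility and the no-backtracking condition leave only a short list of possibilities: either the subpath enters and exits $P$ without crossing $i$ (contributing the same arrow, or the same sum $\varepsilon_\alpha+\eta_\alpha$ after $\varphi_1$, to both sides), or it crosses $i$ exactly once. In the latter case, an arrow $a$ of $\unredQtau$ incident to $i$ is replaced by $a^*$ of $\widehat{Q}(\flip_i(\tau))$ whose $\widetilde{\mu}_i^L(d)$-degree is $-d(a)$ or $1-d(a)$ depending on whether $\tail(a)=i$ or $\head(a)=i$, while the two-step path $j\to i\to k$ crossed by $\gamma$ in $\tau$ corresponds in $\flip_i(\tau)$ to the direct composite arrow $[kj]$ with $\widetilde{\mu}_i^L(d)([kj])=d(k)+d(j)$. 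The $+1$ shifts telescope exactly because each cycle in $\Stau$ containing $i$ has $d$-degree $1$, which is the degree-$1$ hypothesis on $d$.

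The main obstacle will be the configurations specific to the punctured setting, where an endpoint of $i$ or $i^*$ is a puncture of valency $2$. In these configurations $\varphi_1$ is nontrivial: a single arrow $\alpha$ of the unreduced quiver is sent to the sum $\varepsilon_\alpha+\eta_\alpha$ of two arrows in $\Qtau$ (see Figure \ref{Fig:valency2_arrow_notation}), so a single crossing of $\gamma$ in that region contributes two summands to $\overline{\gamma}^\tau$, and $\Stau$ contains short cycles arising from $\varphi_2$ that do not appear in $\unredStau$. The hypothesis that $\flip_i(\tau)$ is itself a valency $\geq 2$-triangulation excludes the degenerate cases that would create a valency-$1$ puncture, but one still has to enumerate carefully how the local valency pattern at each endpoint of $i$ is transformed by the flip. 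I would handle each subcase by writing out both $\overline{\gamma}^\tau$ and $\overline{\gamma}^{\flip_i(\tau)}$ explicitly as signed sums of the reduced arrows, applying the definitions of $d$ and $\mu_i^L(d)$, and observing that the extra $\varepsilon_\alpha$ and $\eta_\alpha$ terms match after using the degree-$1$ condition on every surrounding cycle of $\Stau$. The verification is mechanical but long; the unpunctured template from \cite[Lemma 2.14]{Amiot-Grimeland} covers the generic cases, so the genuine new work is confined to the valency-$2$ analysis.
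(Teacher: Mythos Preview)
Your proposal is correct and matches the paper's approach: the paper does not give an explicit proof but simply declares the lemma to be ``a version of \cite[Lemma 2.14]{Amiot-Grimeland} for punctured surfaces'', so what you outline is precisely the adaptation the paper has in mind, with the local quadrilateral analysis carried over from the unpunctured case and the extra work concentrated in the valency-$2$ configurations handled via $\varphi_1$. One small caveat: Lemma~\ref{lemma:nice-representative-curves} as stated applies only to valency~$\geq 3$-triangulations, so when a puncture has valency~$2$ you should instead pick a representative admissible for the unreduced quiver $\unredQtau$ (where the puncture effectively has valency~$2$ but no backtracking issue arises) and then push through $\varphi_1$; this is exactly what your last paragraph is doing, just make the use of $\widetilde{(-)}^\tau$ versus $\overline{(-)}^\tau$ explicit at that step.
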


Let $(\triangtau,d)$ be a graded valency $\geq 2$-triangulation, and let $\triangtau'$ be any valency $\geq 2$-triangulation. According to \cite[Equation (6.4)]{LF2}, $\triangtau$ and $\triangtau'$ can be connected by a sequence of flips involving only valency $\geq 2$-triangulations. Using any such sequence, we can use Definition \ref{defi:graded-flip} to define a grading on $Q(\triangtau')$. This grading is a degree-1 map (this follows, for instance, from Theorem \ref{thm:flip=>graded-right-equivalence}, which we prove later). Of course, different sequences of flips may yield different degree-1 maps on $Q(\triangtau')$, but these have to be equivalent gradings by a combination of Lemmas \ref{lemma:equivalence-of-gradings-characterization} and \ref{lemma:graded-flip-preserves-evaluation}. If no confusion shall arise, we will write $\{d\}_{\triangtau'}$ for the degree-1 map on $Q(\triangtau')$ obtained by applying any sequence of flips.

\begin{corollary}\label{corollary3.13}
Let $(\tau,d)$ and $(\tau',d')$ be two graded valency $\geq 2$-triangulations. Then $\{d\}_{\triangtau'}-d'$ induces a function $(\{d\}_{\triangtau'}-d')\circ\overline{(-)}^{\tau'}:\pi_1(\Sigma\setminus \mathbb M)\to \mathbb Z$ and the diagram
$$
\xymatrix{
\pi_1(\Sigma\setminus \mathbb M) \ar[r] \ar[dr]_{(\{d\}_{\triangtau'}-d')\circ\overline{(-)}^{\tau'}} & H_1(\Sigma\setminus\mathbb{M}) \ar[r] & H_1(\Sigma) \ar[dl]^{[\{d\}_{\triangtau'}-d']_{H^1(\Sigma)}}\\
& \ZZ & 
}
$$
commutes.
\end{corollary}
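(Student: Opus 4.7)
Write $c:=\{d\}_{\triangtau'}-d'\in C^1(\triangtau')$. Both $\{d\}_{\triangtau'}$ and $d'$ are degree-$1$ maps on $Q(\triangtau')$---the former by the paragraph preceding Definition~\ref{defi:graded-flip}, the latter by hypothesis---so by the paragraph preceding Lemma~\ref{lemma:equivalence-of-gradings-characterization}, $c$ is a $1$-cocycle, i.e., $c\circ\partial_2=0$. By construction, $[\{d\}_{\triangtau'}-d']_{H^1(\Sigma)}\in H^1(\Sigma)$ is the image of $[c]^{\triangtau'}\in H^1(C^\bullet(\triangtau'))$ under the canonical isomorphisms
\[
H^1(C^\bullet(\triangtau'))\cong\Hom_\ZZ(H_1(C_\bullet(\triangtau')),\ZZ)\cong\Hom_\ZZ(H_1(\Sigma),\ZZ)\cong H^1(\Sigma),
\]
where the middle isomorphism uses the corollary to Proposition~\ref{prop:C(Xtau)->C(tau)-isomorphism-in-homology}. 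With this in mind, the proof reduces to showing that the assignment $\gamma\mapsto[\overline{\gamma}^{\triangtau'}]$ defines a group homomorphism $\pi_1(\Sigma\setminus\mathbb M)\to H_1(C_\bullet(\triangtau'))$ that factors through $H_1(\Sigma\setminus\mathbb M)\to H_1(\Sigma)\cong H_1(C_\bullet(\triangtau'))$: granted this, the commutativity of the diagram becomes the tautology $c(\overline{\gamma}^{\triangtau'})=[c]^{\triangtau'}([\overline{\gamma}^{\triangtau'}])$, which holds whenever $c$ is a cocycle and $\overline{\gamma}^{\triangtau'}$ a cycle.

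This reduction splits naturally into two steps. First, $\overline{\gamma}^{\triangtau'}$ is a $1$-cycle in $C_\bullet(\triangtau')$: at each transversal crossing of $\gamma$ with an arc $i\in\triangtau'$, the contributed arrow has boundary $\head(\cdot)-\tail(\cdot)$, and two successive crossings share an adjacent arc with opposite signs, so the sum telescopes around the closed curve $\gamma$ (the valency-$2$ case being absorbed into the fact that $\varphi_\bullet$ is a chain map, Lemma~\ref{lemma:phi-unredCtau->Ctau-is-homotopy-equivalence}). Second, I would use the CW-complex $X_{\triangtau'}$ of Subsection~3.2 to identify the induced map on homology: every $\triangtau'$-admissible $\gamma$ is freely homotopic \emph{in $\Sigma$} to a cellular loop in the $1$-skeleton $X_{\triangtau'}^{(1)}$ whose cellular $1$-chain maps to $\overline{\gamma}^{\triangtau'}$ under $\psi_1$; since $X_{\triangtau'}$ is a strong deformation retract of $\Sigma$, this realizes $[\overline{\gamma}^{\triangtau'}]$ as the image of $[\gamma]_{H_1(\Sigma)}$ under the isomorphism $H_1(\Sigma)\cong H_1(X_{\triangtau'})\xrightarrow{\psi_*}H_1(C_\bullet(\triangtau'))$ of Proposition~\ref{prop:C(Xtau)->C(tau)-isomorphism-in-homology}.

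The main obstacle is the factorization through $H_1(\Sigma)$, i.e., showing that the kernel of $H_1(\Sigma\setminus\mathbb M)\to H_1(\Sigma)$---generated by small loops around the punctures---is killed by $[\overline{(-)}^{\triangtau'}]$. For a puncture $p$ whose associated summand of $\unredStau$ is $\widehat S^p(\triangtau')=a_1\cdots a_{\ell_p}$, a small loop $\gamma$ around $p$ satisfies $\widetilde\gamma^{\triangtau'}=\pm(a_1+\cdots+a_{\ell_p})=\pm\widehat\partial_2(\widehat S^p(\triangtau'))$, a boundary in $\unredCtau{\bullet}$; the chain map $\varphi_\bullet$ of Lemma~\ref{lemma:phi-unredCtau->Ctau-is-homotopy-equivalence} then yields $\overline{\gamma}^{\triangtau'}=\pm\partial_2(\varphi_2(\widehat S^p(\triangtau')))$, a boundary in $C_\bullet(\triangtau')$. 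The valency-$\geq 2$ hypothesis on $\triangtau'$ is essential here, as it guarantees that $\widehat S^p(\triangtau')$ is genuinely a summand of $\unredStau'$ for every puncture $p$ and that $\varphi_2$ sends it to a summand of $\Stau'$, so the displayed boundary formula makes sense.
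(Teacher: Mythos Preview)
Your argument is correct and complete. The paper states Corollary~\ref{corollary3.13} without proof, treating it as an immediate consequence of the preceding material (the construction of $[d_1-d_2]_{H^1(\Sigma)}$ before Lemma~\ref{lemma:equivalence-of-gradings-characterization}, the map $\overline{(-)}^{\tau'}$, and Lemma~\ref{lemma:graded-flip-preserves-evaluation}); you have supplied the unraveling that the paper leaves implicit. In particular, your identification of the essential obstacle---that small loops around punctures must map to boundaries in $C_\bullet(\tau')$---and its resolution via the chain map $\varphi_\bullet$ of Lemma~\ref{lemma:phi-unredCtau->Ctau-is-homotopy-equivalence} is exactly the point that makes the punctured case work, and is not spelled out anywhere in the paper.

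One small correction: the fact that $\{d\}_{\tau'}$ is a degree-$1$ map is asserted in the paragraph \emph{following} Lemma~\ref{lemma:graded-flip-preserves-evaluation} (immediately before the corollary itself), not in a paragraph preceding Definition~\ref{defi:graded-flip}. This is only a misplaced cross-reference and does not affect the mathematics.
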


\subsection{Homeomorphisms and graded equivalence}

Let $\Phi:\Sigma\to \Sigma$ be an orientation-preserving homeomorphism such that $\Phi(\mathbb{M}) = \mathbb{M}$.  It sends arcs to arcs and triangulations to triangulations.  Moreover, if $\tau$ is a triangulation and $\tau'$ is its image by $\Phi$, then $\Phi$ induces an isomorphism of quivers $Q(\tau)\to Q(\tau')$, which we still denote by $\Phi$.

\begin{proposition}[Proposition 8.5 of \cite{Bridgeland-Smith}, graded version]\label{prop::BS}
Assume that $(\Sigma, \mathbb{M})$ is not one of the following:
\begin{itemize}
 \item A sphere with at most $5$ marked points;
 \item An unpunctured disc with at most $3$ marked points;
 \item a once-punctured disc with $1$, $2$ or $4$ marked points on the boundary;
 \item a twice-punctured disc with $2$ marked points on the boundary.
\end{itemize}
Let $(\tau, d)$ and $(\tau', d')$ be two graded triangulations of $\surf$.  Then there exists an orientation preserving homeomorphism $\Phi:\Sigma\to \Sigma$ such that $\Phi(\mathbb{M}) = \mathbb{M}$, $\Phi(\tau) = \tau'$ and $d=d'\circ\Phi$ if and only if the graded quivers associated to $(\tau, d)$ and $(\tau', d')$ are isomorphic.
\end{proposition}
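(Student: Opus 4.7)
My plan is to reduce to the ungraded version of the statement (Proposition 8.5 of \cite{Bridgeland-Smith}), using the fact that a graded quiver isomorphism is in particular an ordinary quiver isomorphism. The forward direction is essentially a definitional check: if $\Phi:\Sigma\to\Sigma$ is an orientation-preserving homeomorphism with $\Phi(\mathbb{M})=\mathbb{M}$ and $\Phi(\tau)=\tau'$, then it sends arcs to arcs, punctures to punctures, and triangles to triangles, and hence induces a quiver isomorphism $\Phi_*:Q(\tau)\to Q(\tau')$; the hypothesis $d=d'\circ\Phi$ is then precisely the statement that $\Phi_*$ respects the gradings.

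For the converse, I will start with an isomorphism of graded quivers $\phi:(Q(\tau),d)\to(Q(\tau'),d')$. Forgetting the grading and invoking the ungraded Bridgeland-Smith proposition, I obtain an orientation-preserving homeomorphism $\Phi_0:\Sigma\to\Sigma$ with $\Phi_0(\mathbb{M})=\mathbb{M}$ and $\Phi_0(\tau)=\tau'$. There is no a priori reason for the induced map $(\Phi_0)_*$ to coincide with the chosen $\phi$, so I will correct this by looking at the quiver automorphism $\sigma:=\phi\circ(\Phi_0)_*^{-1}$ of $Q(\tau')$. Applying the ungraded proposition to the pair of triangulations $(\tau',\tau')$ equipped with $\sigma$ should produce an orientation-preserving self-homeomorphism $\Psi:\Sigma\to\Sigma$ with $\Psi(\mathbb{M})=\mathbb{M}$, $\Psi(\tau')=\tau'$, and $\Psi_*=\sigma$. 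Setting $\Phi:=\Psi\circ\Phi_0$ then yields $\Phi_*=\sigma\circ(\Phi_0)_*=\phi$, and since $\phi$ is a graded isomorphism I conclude $d=d'\circ\phi=d'\circ\Phi$, as desired.

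The main obstacle is precisely this refinement of Bridgeland-Smith: as written, Proposition 8.5 of \cite{Bridgeland-Smith} only asserts the existence of \emph{some} homeomorphism whenever the quivers are isomorphic, rather than the stronger assertion that every quiver isomorphism can be realized by a homeomorphism. I will need to inspect the proof in \cite{Bridgeland-Smith} to verify this stronger form — as is typically the case since the argument proceeds by reconstructing the surface and triangulation from local models around triangles and punctures indexed by the vertices and cycles of the quiver, the constructed homeomorphism does realize the prescribed quiver isomorphism. The exceptional surfaces listed in the hypothesis are exactly the low-complexity cases where such a reconstruction is known to fail, and the same list controls the failure here.
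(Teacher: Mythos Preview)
Your approach is sound, and the forward direction matches the paper's. For the converse, you correctly identify that the crux is a stronger form of the Bridgeland--Smith result: not merely that an isomorphism of quivers implies the existence of some homeomorphism, but that any prescribed quiver isomorphism is induced by a homeomorphism. Note, however, that once you have this stronger form your two-step correction via $\Phi_0$ and $\Psi$ becomes redundant: applying the stronger form directly to the original graded isomorphism $\phi$ already yields $\Phi$ with $\Phi_*=\phi$, whence $d=d'\circ\phi=d'\circ\Phi$ immediately.

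The paper takes a more direct route. Rather than reducing to the ungraded statement and then digging back into its proof to extract the stronger form, it reruns the underlying argument---the block decomposition theory of Fomin--Shapiro--Thurston \cite{FST}---in the graded setting. The point is that the quiver $Q(\tau)$ is built by gluing five types of ``blocks,'' which correspond to the ``puzzle pieces'' whose gluing yields the triangulated surface; both gluings admit obvious graded refinements. For the non-excluded surfaces, Gu's work \cite{Gu1,Gu2} shows the block decomposition of the quiver is unique, so a graded quiver isomorphism automatically matches up graded blocks and hence graded puzzle pieces, producing the desired homeomorphism with the grading condition built in. Your approach would eventually arrive at this same block-decomposition argument upon inspecting the Bridgeland--Smith proof, so the two are not far apart; the paper's version is simply more self-contained and avoids both the detour through the correction automorphism $\sigma$ and the need to separately verify the stronger realizability statement.
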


\begin{proof}

 We give a graded version of the proof in \cite{Bridgeland-Smith}.  One implication is obvious: if there exists a homeomorphism $\Phi$ as in the statement, then the graded quivers associated to the graded triangulations are isomorphic.

 To prove the converse, we use a graded version of the block decomposition of \cite{FST}.  In \cite[Remark 4.1]{FST}, it is shown that any ideal triangulation (except for the four-punctured sphere) can be obtained by gluing three different types of ``puzzle pieces'' together; it is easily seen that graded triangulations are obtained by gluing graded versions of these puzzle pieces.  Next, it was shown in \cite[Theorem 13.3]{FST} that a quiver is the quiver of some ideal triangulation if and only if the quiver is obtained by gluing five different types of ``blocks'' together; moreover, the blocks in the decomposition correspond to puzzle pieces.  Again, it is easy to see that a graded quiver will be associated to a graded triangulation if and only if it is obtained by gluing graded versions of the five types of blocks.

While the quiver of a triangulation is not enough to recover the triangulation itself, the data of the quiver together with its block decomposition is enough to recover it up to an orientation-preserving homeomorphism; this is explained in the proof of \cite[Theorem 13.3]{FST}.  Thus if the (graded) quiver of a triangulation has a unique block decomposition, then any two triangulations $(\tau, d)$ and $(\tau', d')$ giving rise to this quiver are related by a homeomorphism $\Phi$ as in the statement (that is, $\tau'=\Phi(\tau)$ and $d=d'\circ\Phi$).  An algorithm to determine the block decompositions of a given quiver was developed in \cite{Gu1} and used in \cite{Gu2} to list all quivers with multiple block decompositions.  We see from this list that the surfaces admitting different triangulations with the same quiver are those excluded in the statement of the proposition.
\end{proof}

To any tagged triangulation $\triangtau$ we associate the Jacobian algebra $\mathcal{P}(\Qtau,S(\tau))$. Then any degree-1 map on $\Qtau$ gives rise to a grading on the algebra $\mathcal{P}(Q(\triangtau),S(\triangtau))$.

\begin{proposition}\label{fundamental}
Let $(\tau,d)$ and $(\tau',d')$ be two graded valency $\geq 2$-triangulations of $\surf$. Then the following are equivalent.
\begin{enumerate}
\item The algebras $\mathcal{P}(Q(\tau),S(\tau),d)$ and $\mathcal{P}(Q(\tau'),S(\tau'),d')$ are graded equivalent;
\item There exists an orientation preserving homeomorphism $\Phi:\Sigma\to \Sigma$ with $\Phi(\mathbb M)=\mathbb M$ such that $\Phi(\tau)=\tau'$ and for any simple closed curve $\gamma$ on $\Sigma$, we have $d(\overline{\gamma}^\tau)=d'(\overline{\Phi(\gamma)}^{\tau'})$.
\end{enumerate}
\end{proposition}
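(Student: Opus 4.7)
The strategy is to reduce both implications to the characterization of equivalent gradings provided by Lemma \ref{lemma:equivalence-of-gradings-characterization}, combined with the graded Bridgeland--Smith Proposition \ref{prop::BS} and the basic fact that $H_1(\Sigma)$ admits a $\mathbb{Z}$-basis represented by simple closed curves.

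For the implication (2) $\Rightarrow$ (1), I would first observe that an orientation-preserving homeomorphism $\Phi$ with $\Phi(\mathbb{M})=\mathbb{M}$ and $\Phi(\tau)=\tau'$ induces a quiver isomorphism $\phi:\Qtau\to Q(\tau')$ which sends the potential $\Stau$ to $S(\tau')$; this follows from the construction of $(\Qtau,\Stau)$ out of the combinatorics of the triangulated surface in \cite{LF1}. Functoriality of the assignment $\gamma\mapsto\overline{\gamma}^{\tau}$ under homeomorphisms gives $\overline{\Phi(\gamma)}^{\tau'}=\phi(\overline{\gamma}^\tau)$ for every $\tau$-admissible closed curve $\gamma$ (and one may replace any simple closed curve by a $\tau$-admissible representative in its free homotopy class via Lemma \ref{lemma:nice-representative-curves}). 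The hypothesis in (2) therefore translates into $d(\overline{\gamma}^\tau)=(d'\circ\phi)(\overline{\gamma}^\tau)$ for a collection of simple closed curves whose classes form a $\mathbb{Z}$-basis of $H_1(\Sigma)\cong H_1(C_\bullet(\tau))$. By condition (4) of Lemma \ref{lemma:equivalence-of-gradings-characterization}, $d$ and $d'\circ\phi$ are equivalent gradings on $\Qtau$, and then Remark \ref{rema::graded-equiv} yields the graded equivalence in (1).

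For (1) $\Rightarrow$ (2), I would use Remark \ref{rema::graded-equiv} to extract from the graded equivalence a quiver isomorphism $\phi:\Qtau\to Q(\tau')$ sending $\Stau$ to $S(\tau')$ and making $d$ equivalent, as a grading on $\Qtau$, to $d_\ast:=d'\circ\phi$. Since $\phi$ is then by construction an isomorphism of \emph{graded} quivers from $(\Qtau,d_\ast)$ to $(Q(\tau'),d')$, Proposition \ref{prop::BS} applied to the pair of graded triangulations $(\tau,d_\ast)$ and $(\tau',d')$ produces an orientation-preserving homeomorphism $\Phi:\Sigma\to\Sigma$ with $\Phi(\mathbb M)=\mathbb M$, $\Phi(\tau)=\tau'$ and $d_\ast=d'\circ\Phi_1$ on $\Qtau_1$. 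As above, for any $\tau$-admissible closed curve $\gamma$ we have $\overline{\Phi(\gamma)}^{\tau'}=\Phi_1(\overline{\gamma}^\tau)$, hence $d'(\overline{\Phi(\gamma)}^{\tau'})=d_\ast(\overline{\gamma}^\tau)$. Using once more Lemma \ref{lemma:equivalence-of-gradings-characterization}, the equivalence of the gradings $d$ and $d_\ast$ gives $d(\overline{\gamma}^\tau)=d_\ast(\overline{\gamma}^\tau)$ for every such $\gamma$, so condition (2) holds (in particular for simple closed curves).

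The main obstacle is the applicability of Proposition \ref{prop::BS}: it has a short list of excluded surfaces (spheres with few marked points, small discs, etc.), and these cases have to be dealt with separately, either by the very restricted combinatorics of valency $\geq 2$-triangulations of these surfaces or by direct inspection. A secondary technical point that needs care is verifying that the quiver isomorphism supplied by the graded equivalence of Jacobian algebras really does transport the potential $\Stau$ to $S(\tau')$ (as opposed to a merely right-equivalent potential), so that Proposition \ref{prop::BS} can be applied; this requires a standard normalization argument using the non-degeneracy of $\Stau$ from \cite[Corollary 9.1]{LF4}. Everything else is bookkeeping on the identification $\overline{\Phi(\gamma)}^{\tau'}=\Phi_1(\overline{\gamma}^\tau)$ and the passage between evaluation-on-curves and equivalence-of-gradings.
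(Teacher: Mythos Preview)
Your proposal is correct and follows essentially the same route as the paper: extract a quiver isomorphism $\phi$ from the graded equivalence via Remark~\ref{rema::graded-equiv}, observe that $d$ and $d'\circ\phi$ are equivalent gradings, apply Proposition~\ref{prop::BS} to the isomorphic graded quivers, and handle the small excluded surfaces by hand (the paper singles out the once-punctured square and the twice-punctured digon). One minor point: your ``secondary technical concern'' about $\phi$ carrying $\Stau$ to $S(\tau')$ is unnecessary, since Proposition~\ref{prop::BS} only requires an isomorphism of graded \emph{quivers}, not of graded QPs.
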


\begin{proof}

That (2) implies (1) is immediate.  Assume now that (1) holds.  Then there is an isomorphism of quivers $\phi:Q(\tau)\to Q(\tau')$.  According to Remark \ref{rema::graded-equiv}, $d$ and $d'\circ\phi$ are equivalent gradings.  

Note that $d$ and $d'\circ\phi$ give the same result when evaluated at closed curves.  Note also that the graded quivers of $(\tau,d'\circ\phi)$ and $(\tau',d')$ are isomorphic.  Thus Proposition \ref{prop::BS} implies (2) for all cases except possibly the once-punctured disc with $4$ marked points on the boundary and the twice-punctured disc with $2$ marked points on the boundary (indeed, valency $\geq 2$-triangulations of surfaces with non-empty boundary do not fall within the other excluded cases of Proposition \ref{prop::BS}).

For the once-punctured disc with $4$ marked points on the boundary, there are only finitely many triangulations, exactly five of which (up to orientation-preserving homeomorphisms) are valency $\geq 2$-triangulations.  It is easy to check directly on these that (2) holds

For the twice-punctured disc with $2$ marked points on the boundary, there are (up to orientation-preserving homeomorphisms) only four valency $\geq 2$-triangulations.



\end{proof}


\section{Graded flip vs graded mutation}\label{section4}


Let $(\tau,d)$ be a graded tagged triangulation and $i$ be any arc in $\tau$. By definition the potential $\Stau$ is homogeneous of degree 1 with respect to $d$. Hence by Proposition \ref{prop:mutation-of-graded-QPs}, the potential $\mu_i^L(S(\triangtau))$ is homogeneous of degree 1 with respect to $\mu_i^L(d)$. Moreover by \cite[Theorem 30]{LF1}, we know that  $(\mu_i(Q(\tau)),\mu_i(\Stau))$ and $(Q(\flip_i(\tau)),S(\flip_i(\tau)))$ are right-equivalent. 
Can the right equivalence be chosen so as to be an isomorphism of graded algebras with respect to $\mu_i^L(d)$ ? In this section we prove the following result that answers positively this question.

\begin{theorem}\label{thm:flip=>graded-right-equivalence} Let $(\tau,d)$ be a graded tagged triangulation and $i$ be a tagged arc in $\tau$. Then there exists a graded right equivalence $$\Theta:\mu_i^L((Q(\tau)),\Stau,d))\underset{\mathbb Z}{\cong}(Q(\flip_i(\triangtau)),S(\flip_i(\triangtau)),\mu_i^L(d)).$$

Consequently, $S(\flip_i(\triangtau))$ is homogeneous of degree 1 with respect to $\mu_i^L(d)$, and  there is an isomorphism of graded algebras $$ \mathcal{P}(\mu_i(Q(\tau)),\mu_i(\Stau),\mu_i^L(d)) \underset{\mathbb Z}{\cong}\mathcal{P}(Q(\flip_i(\triangtau)),S(\flip_i(\triangtau)),\mu_i^L(d)).$$ 
\end{theorem}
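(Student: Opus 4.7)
The plan is to take the (ungraded) right-equivalence between $(\mu_i(Q(\tau)),\mu_i(\Stau))$ and $(Q(\flip_i(\triangtau)),S(\flip_i(\triangtau)))$ provided by \cite[Theorem~30]{LF1}, and show that it can be realised as an isomorphism of graded algebras with respect to $\mu_i^L(d)$. Granting this, the two ``Consequently'' assertions follow immediately: the target potential $S(\flip_i(\triangtau))$ inherits degree-$1$ homogeneity from $\mu_i(\Stau)$ (which is homogeneous of degree~$1$ by Proposition~\ref{prop:mutation-of-graded-QPs}, applied to the non-degenerate graded QP $(Q(\tau),\Stau,d)$; non-degeneracy is provided by \cite[Corollary~9.1]{LF4}), and any graded right-equivalence descends to a graded isomorphism at the level of Jacobian algebras, since the Jacobian ideal of a homogeneous potential is a graded ideal. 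Thus the task reduces to producing an automorphism $\Theta$ of $\RA{\mu_i(Q(\tau))}=\RA{Q(\flip_i(\triangtau))}$ that intertwines the two potentials and preserves the $\mu_i^L(d)$-grading.

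The main step is a case-by-case inspection of the argument in \cite[Theorem~30]{LF1}. That proof distinguishes the possible local configurations around the flipped arc $i$ --- in particular, whether one of the two adjacent triangles is self-folded, whether the two adjacent triangles share a third vertex, and whether either endpoint of $i$ is a puncture of valency~$2$ --- and in each case writes the relevant right-equivalence down explicitly: on each arrow $a$ of $Q(\flip_i(\triangtau))$ it has the form $a\mapsto\lambda_a\, a+\text{(correction)}$, with $\lambda_a\in\CC^\times$ and the correction a finite sum of longer paths obtained from cyclic derivatives of the pre-mutated potential $\widetilde{\mu}_i(\Stau)$. By Proposition~\ref{prop:mutation-of-graded-QPs}(1), $\widetilde{\mu}_i(\Stau)$ is homogeneous of degree~$1$ with respect to $\widetilde{\mu}_i^L(d)$, so its cyclic derivatives are themselves homogeneous of prescribed degree; a direct reading of Definition~\ref{def:mutation-of-graded-quiver} shows that, in each case, the correcting paths land precisely in $\mu_i^L(d)$-degree equal to $\mu_i^L(d)(a)$. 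Hence the explicit right-equivalence of \cite{LF1} is automatically degree-preserving.

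The main obstacle is the presence of valency-$2$ punctures. In those cases the reduced QP $(\Qtau,\Stau)$ differs from the unreduced QP $(\unredQtau,\unredStau)$, and the argument in \cite[Theorem~30]{LF1} routes the right-equivalence through an auxiliary ``popped'' QP before carrying out the pre-mutation and two reduction steps. The grading has to be transported through this entire chain of operations: each reduction step is controlled by Proposition~\ref{prop:mutation-of-graded-QPs}, while the popping step and the intermediate changes of variables are visibly homogeneous once the degrees of the newly created and removed arrows are read off from Definition~\ref{def:mutation-of-graded-quiver}. Lemma~\ref{lemma:d1_d2-equiv=>muL(d1)_muL(d2)-equiv} plays an auxiliary role, allowing us to work up to equivalence of gradings, which is what is actually needed because different sequences of operations may yield gradings equivalent but not literally equal to $\mu_i^L(d)$. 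Assembling the verifications across all cases produces the desired graded $\Theta$, and the claimed isomorphism of graded Jacobian algebras follows formally.
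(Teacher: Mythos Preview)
Your overall strategy---verify that the explicit right-equivalences already constructed in the literature are degree-preserving---matches the paper's, and for the generic configuration (Case~1 in the paper: $i$ not the folded side of a self-folded triangle) your sketch is essentially what the paper does. The gap is in the hard case, which you have misidentified.

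The real obstacle is not valency-$2$ punctures but the case where $i$ \emph{is} the folded side of a self-folded triangle, so that $\flip_i(\tau)$ is no longer ideal. Here \cite[Theorem~30]{LF1} does not apply; the relevant right-equivalence is the one from \cite[Theorem~8.1]{LF4}, built as a composite $\Theta_4\circ\Theta_3\circ\Theta_2\circ\Theta_1$ passing through an auxiliary triangulation $\sigma$ and ``popped'' potentials $W(\tau',i,j)=\psi_{i,j}(S(\tau'))$, where $\psi_{i,j}$ is the quiver automorphism swapping the vertices $i$ and $j$. Your claim that the popping step is ``visibly homogeneous'' is precisely what fails: $\psi_{i,j}$ does \emph{not} preserve the grading on the nose. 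The paper's substantive contribution here is Proposition~\ref{prop:pop-is-quiver-grading-equivalence}, which shows that $\widetilde d$ and $\widetilde d\circ\psi_{i,j}$ are \emph{equivalent} gradings, via an explicit $r:Q_0\to\ZZ$ read off from the arrow-degrees in the digon containing the self-folded triangle; this is then upgraded (Corollaries~\ref{cor:psiW-homogenous} and~\ref{cor:psi-conjugacy}) to the statement that conjugation by $\psi_{i,j}$ sends graded automorphisms to graded automorphisms, which is exactly what is needed to check that $\Theta_3=\psi_{i,j}\circ\Phi\circ\psi_{i,j}^{-1}$ is graded. Without this, the folded-side case is not handled. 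The valency-$2$ discussion is a red herring for this theorem: the reduced/unreduced distinction is absorbed in the initial reduction from $\tau$ to $\tau^\circ$, and the paper's case split never singles it out.
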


The proof of this result, given in Subsection \ref{subsection:proof}, involves checking that the right equivalence between $(\mu_i(Q(\tau)),\mu_i(\Stau))$ and $(Q(\flip_i(\triangtau)),S(\flip_i(\triangtau)))$ constructed in the proof of \cite[Theorem 8.1]{LF4} is graded with respect to $\mu_i^L(d)$. Since this latter proof is quite long, and since it in turn depends heavily on the proof of \cite[Theorem 30]{LF1} and on several technical statements, we devote Subsection \ref{subsection:proof} to describing how the reader can verify this. We first show that we can restrict to the case where $\tau$ is an ideal triangulation. Then we consider two cases depending wether $\flip_i(\tau)$ is ideal or not. This latter case is the most delicate. To handle it, we need intermediate results (Corollaries~\ref{cor:psiW-homogenous} and \ref{cor:psi-conjugacy}) that follow from Proposition \ref{prop:pop-is-quiver-grading-equivalence}.

\subsection{The automorphism $\psi_{i,j}$}

\begin{proposition}\label{prop:pop-is-quiver-grading-equivalence}
Let $\nu$ be an ideal triangulation. Suppose that $i\in\nu$ is the folded side of a self-folded triangle of $\nu$, and that $j\in\nu$ is the arc that, together with $i$, forms such triangle.  Let $\psi_{i,j}$ be the $\CC$-algebra automorphism of $\RA{Q(\nu)}$ induced by the quiver automorphism of $Q(\nu)$ that interchanges $i$ and $j$. For any grading $\widetilde{d}:Q(\nu)\rightarrow\ZZ$ with respect to which $S(\nu)$ is homogeneous of degree 1, define $\widetilde{d}':= \widetilde{d}\circ \psi_{i,j}$. Then $\widetilde{d}$ and $\widetilde{d}'$ are equivalent as quiver gradings (see Definition \ref{def:equivalence-of-quiver-gradings}).
\end{proposition}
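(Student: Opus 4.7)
The goal is to construct an integer-valued function $r:Q(\nu)_0\rightarrow\ZZ$ such that $\widetilde{d}(a)-\widetilde{d}'(a)=r(h(a))-r(t(a))$ for every arrow $a$ of $Q(\nu)$. The plan is to use the local structure of $Q(\nu)$ at the self-folded triangle to compute the difference $\widetilde{d}-\widetilde{d}'$ explicitly on arrows incident to $i$ or $j$, and then to invoke the degree-$1$ condition on $S(\nu)$ to verify that this difference is a coboundary.

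First I would note that the quiver automorphism underlying $\psi_{i,j}$ fixes every vertex of $Q(\nu)$ other than $i$ and $j$, and hence fixes every arrow whose head and tail both lie outside $\{i,j\}$. Consequently $\widetilde{d}$ and $\widetilde{d}'$ agree on such arrows, so the candidate function $r$ may be sought vanishing on every vertex outside a small neighborhood of $\{i,j\}$ in $Q(\nu)$.

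Next, using the construction of $(Q(\nu),S(\nu))$ in \cite[Definition 8]{LF1}, I would enumerate the arrows of $Q(\nu)$ incident to $i$ or $j$. These arise at the outer vertex $v$ (the shared endpoint of $i$ and $j$, at which the counterclockwise cyclic order of arcs displays both sides of the loop $j$ with $i$ sandwiched between them) and at the puncture $p$ (the endpoint of $i$ inside the loop $j$, which has valency $1$ in $\nu$). The $\mathbb{Z}/2$-symmetry of the self-folded triangle, built into the LF1 construction, ensures that these arrows come in pairs swapped by $\psi_{i,j}$: an arrow $x\to i$ with $x\notin\{i,j\}$ is paired with the arrow $x\to j$, an arrow $i\to x$ with the arrow $j\to x$, and the $2$-cycle $\alpha:j\to i$, $\beta:i\to j$ at $v$ is swapped with itself.

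Finally, I would combine this pairing with the constraint that $S(\nu)$ is homogeneous of degree $1$ with respect to $\widetilde{d}$. Each cycle of $S(\nu)$ that uses an arrow near the self-folded triangle (the $3$-cycles coming from adjacent triangles and the cycle encircling $p$) yields a linear equation in the degrees of the local arrows; comparing such an equation to its image under $\psi_{i,j}$, which is again a cycle of $S(\nu)$ because the LF1 construction is symmetric in $i$ and $j$, produces a relation of the form $\widetilde{d}(a)-\widetilde{d}(\psi_{i,j}(a))=r(h(a))-r(t(a))$ that is consistent across all arrows for a suitable choice of integer values of $r$ at $i$, at $j$, and at the adjacent vertices. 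The main obstacle is the combinatorial case analysis at the outer vertex $v$, whose valency governs how many outside arrows are attached to $j$ there, together with the specific convention of \cite[Definition 8]{LF1} at the valency-$1$ puncture $p$, which contributes arrows and potential terms particular to the self-folded configuration; in each case, however, the constraints from $S(\nu)$ pin down $r$ up to a harmless global additive constant, establishing the required equivalence.
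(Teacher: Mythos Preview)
Your strategy matches the paper's: define $r$ supported near $\{i,j\}$ and use the degree-$1$ constraint on the potential to check that $\widetilde{d}-\widetilde{d}'$ is the coboundary of $r$. The paper in fact shows that $r$ may be taken nonzero only at $i$ and $j$ themselves (with $r(j)=-r(i)$), and it streamlines the local analysis by first passing to the unreduced quiver $\widehat{Q}(\nu)$ and potential $\widehat{S}(\nu)$, where the arrows attached to $i$ and $j$ are most transparently described.

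One point in your description is off and would derail the computation if taken literally: there is no $2$-cycle between $i$ and $j$ in $Q(\nu)$ (or in $\widehat{Q}(\nu)$). In the construction of \cite{LF1}, the self-folded triangle does not contribute arrows $j\to i$ and $i\to j$; rather, the relevant local picture is governed by the enclosing digon. If the digon has sides $k$ and $\ell$, then (in the generic case) one has an arrow $\alpha:k\to\ell$ together with two parallel pairs $\beta:\ell\to i$, $\delta:\ell\to j$ and $\gamma:i\to k$, $\varepsilon:j\to k$, with $\alpha\beta\gamma$ and $\alpha\delta\varepsilon$ among the terms of $\widehat{S}(\nu)$. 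The automorphism $\psi_{i,j}$ swaps $\beta\leftrightarrow\delta$ and $\gamma\leftrightarrow\varepsilon$ and fixes $\alpha$. Setting $r(i)=\widetilde{d}(\beta)-\widetilde{d}(\delta)$, $r(j)=-r(i)$ and $r=0$ elsewhere, the single relation $\widetilde{d}(\beta)+\widetilde{d}(\gamma)=\widetilde{d}(\delta)+\widetilde{d}(\varepsilon)$ forced by the degree-$1$ condition is exactly what makes the coboundary check go through. The remaining cases (where one side of the digon is a boundary segment, or where further self-folded triangles abut) are handled by the same mechanism. Once you correct the local picture, your outline becomes the paper's proof.
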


\begin{proof} The quiver automorphism of $Q(\nu)$ can be extended to a quiver automorphism of $\widehat{Q}(\nu)$ where $\widehat{Q}(\nu)$ is the unreduced signed adjacency quiver of $\nu$ as in Subsection \ref{subsection3.1}.  We denote again by $\psi_{i,j}$ the corresponding $\CC$-algebra automorphism of $\RA{\widehat{Q}(\nu)}$.

The grading $\widetilde{d}:Q(\nu)\rightarrow\ZZ$ can be extended in a unique way to a grading $\widehat{Q}(\nu)\rightarrow\ZZ$ with respect to which $\widehat{S}(\nu)$ is homogeneous of degree 1. We denote this extension by $\widetilde{d}$ as well. To prove the lemma it is enough to show that the gradings $\widetilde{d}$ and $\widetilde{d}'=\widetilde{d}\circ \psi_{i,j}:\widehat{Q}(\nu)\rightarrow\ZZ$ are equivalent.

The self-folded triangle formed by $i$ and $j$ is contained in a digon. Assume first that both sides of this digon are arcs in $\nu$ (and not segments of the boundary of $\Sigma$). 
Then the full subquiver of $\widehat{Q}(\nu)$ formed by the arcs that are connected to $i$ or $j$ by arrows of $\widehat{Q}(\nu)$ is one of the three depicted in Figure \ref{Fig:sf_triangles}.
\begin{figure}[!h]
                \caption{Possibilities for a self-folded triangle and the digon surrounding it}\label{Fig:sf_triangles}
                \centering
                \includegraphics[scale=.5]{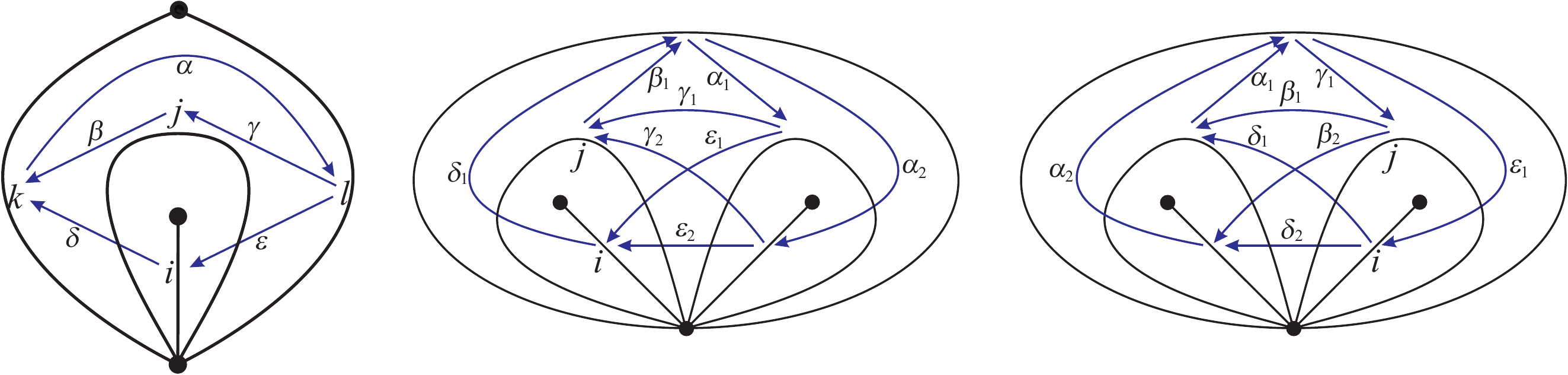}
        \end{figure}
        
 In the first case we have $\widetilde{d}'(\beta)=\widetilde{d}\circ \psi_{i,j}(\beta)=\widetilde{d}(\delta)$.  Similarly we have $\widetilde{d}'(\delta)=\widetilde{d}(\beta)$,  $\widetilde{d}'(\gamma)=\widetilde{d}(\epsilon)$,  $\widetilde{d}'(\epsilon)=\widetilde{d}(\gamma)$ and  $\widetilde{d}'(a)=\widetilde{d}(a)$ for any other arrow. Then define $r:\widehat{Q}(\nu)_0\to \mathbb Z$ to be $$r(i)=\widetilde{d}(\beta)-\widetilde{d}(\delta),\quad r(j)=\widetilde{d}(\delta)-\widetilde{d}(\beta), \textrm{ and } r(m)=0 \textrm{ for any other vertex}.$$ Since $\widehat{S}(\nu)$ is homogenous of degree 1 with respect to $\widetilde{d}$, and since and $\alpha\beta\gamma$ and $\alpha\delta\varepsilon$ are terms that appear in $\widehat{S}(\nu)$ with non-zero coefficient, we deduce that
$$
\widetilde{d}(\beta)+\widetilde{d}(\gamma)=\widetilde{d}(\delta)+\widetilde{d}(\varepsilon).$$ One then easily checks that $$ \widetilde{d}'(a)=\widetilde{d}(a)+r(t(a))-r(h(a))$$ for any arrow $a$ of $Q$ which means that $\widetilde{d}$ and $\widetilde{d}'$ are equivalent gradings.

In the second case, since $\alpha_1\beta_1\gamma_1$, $\alpha_1\delta_1\epsilon_1$, $\alpha_2\delta_1\epsilon_2$ and $\alpha_2\delta_1\epsilon_2$ are terms in the potential, one deduces that $\widetilde{d}(\gamma_2)+\widetilde{d}(\epsilon_1)=\widetilde{d}(\gamma_1)+\widetilde{d}(\epsilon_2).$ Then the map $r:\widehat{Q}(\nu)_0\to \mathbb Z$ defined by $$r(i)=\widetilde{d}(\epsilon_2)-\widetilde{d}(\gamma_2)\quad r(j)=\widetilde{d}(\gamma_2)-\widetilde{d}(\epsilon_2) \textrm{ and } r(k)=0 \textrm{ for } k\neq i,j$$ satisfies $ \widetilde{d}'(a)=\widetilde{d}(a)+r(t(a))-r(h(a))$ for any arrow $a$ of $\widehat{Q}(\nu)$.

The third case and the case where one side of the digon is a segment of the boundary are similar. 
        
\end{proof}

Combining this proposition together with Lemma \ref{lemma:d1_d2-equiv=>muL(d1)_muL(d2)-equiv} and the fact that $\psi_{i,j}=\psi_{i,j}^{-1}$, one has the following. 
\begin{corollary}\label{cor:psiW-homogenous}
Let $\nu$, $i$ and $j$ be as in Proposition \ref{prop:pop-is-quiver-grading-equivalence}, and $\widetilde{d}$ a grading on $Q(\nu)$. Then a potential $W$ on $Q(\nu)$ is  homogeneous of degree 1 with respect to $\widetilde{d}$ if and only if $\psi_{i,j}(W)$ is homogeneous of degree 1 with respect to $\widetilde{d}$.
\end{corollary}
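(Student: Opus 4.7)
The plan is to combine, essentially mechanically, the three ingredients named after the statement: Proposition \ref{prop:pop-is-quiver-grading-equivalence}, Lemma \ref{lemma:d1_d2-equiv=>muL(d1)_muL(d2)-equiv}, and the involutivity $\psi_{i,j}=\psi_{i,j}^{-1}$. First, I would introduce the twisted grading $\widetilde{d}':=\widetilde{d}\circ\psi_{i,j}$ on $Q(\nu)$. Under the standing hypothesis that $\widetilde{d}$ makes $S(\nu)$ homogeneous of degree $1$ (inherited from the setup of the proposition to which the corollary refers), Proposition \ref{prop:pop-is-quiver-grading-equivalence} immediately delivers that $\widetilde{d}$ and $\widetilde{d}'$ are equivalent as quiver gradings.

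Next I would record the following bookkeeping. Since $\psi_{i,j}$ is induced from a quiver automorphism, it sends any path $a_1\cdots a_\ell$ to a path $\psi_{i,j}(a_1)\cdots\psi_{i,j}(a_\ell)$, so that
$$
\widetilde{d}\bigl(\psi_{i,j}(a_1\cdots a_\ell)\bigr)=\sum_{k=1}^{\ell}\widetilde{d}(\psi_{i,j}(a_k))=\sum_{k=1}^{\ell}\widetilde{d}'(a_k).
$$
Extending linearly, this shows that a potential $W$ is homogeneous of degree $1$ with respect to $\widetilde{d}'$ if and only if $\psi_{i,j}(W)$ is homogeneous of degree $1$ with respect to $\widetilde{d}$. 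On the other hand, Lemma \ref{lemma:d1_d2-equiv=>muL(d1)_muL(d2)-equiv} applied to the equivalent gradings $\widetilde{d}$ and $\widetilde{d}'$ guarantees that $W$ is homogeneous of degree $1$ with respect to $\widetilde{d}$ if and only if it is so with respect to $\widetilde{d}'$. Chaining these two equivalences produces the desired iff.

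The involutivity $\psi_{i,j}=\psi_{i,j}^{-1}$ is what makes the conclusion perfectly symmetric in $W$ and $\psi_{i,j}(W)$, so no direction is privileged and no further argument is required for the converse. I do not anticipate any serious obstacle, as each ingredient is already at our disposal; the only point one must handle carefully is tracking the direction of the composition $\widetilde{d}\circ\psi_{i,j}$ when passing between an action on potentials and a twisting of the grading, together with the (easy) fact that $\psi_{i,j}$ preserves the path structure so that $\psi_{i,j}(W)$ is again a potential to which degree-$1$-homogeneity makes sense.
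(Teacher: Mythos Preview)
Your proof is correct and follows exactly the approach the paper indicates: the paper merely states that the corollary follows by ``combining this proposition together with Lemma~\ref{lemma:d1_d2-equiv=>muL(d1)_muL(d2)-equiv} and the fact that $\psi_{i,j}=\psi_{i,j}^{-1}$'', and you have faithfully unpacked this into the chain of equivalences via the auxiliary grading $\widetilde{d}'=\widetilde{d}\circ\psi_{i,j}$. Your remark that the hypothesis on $\widetilde{d}$ (making $S(\nu)$ homogeneous of degree~$1$) is inherited from Proposition~\ref{prop:pop-is-quiver-grading-equivalence} is well taken, since that hypothesis is indeed needed to invoke the proposition.
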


\begin{corollary}\label{cor:psi-conjugacy} 
Let $\nu$, $i$ and $j$ be as in Proposition \ref{prop:pop-is-quiver-grading-equivalence}, $d$ be a grading on $Q(\nu)$ and $\rho:(\mathbb C \langle\langle Q(\nu)\rangle\rangle,d) \to \mathbb (C \langle\langle Q(\nu)\rangle\rangle,d)$ be a graded automorphism that is identity on the vertices. Then the automorphism $\rho':=\psi_{i,j}\circ\rho\circ\psi_{i,j}^{-1}$ is also graded with respect to $d$.
\end{corollary}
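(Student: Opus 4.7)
The plan is to leverage the gauge equivalence produced in the proof of Proposition \ref{prop:pop-is-quiver-grading-equivalence} together with an antisymmetry of its transition function. First I would take the function $r:Q(\nu)_0\to\ZZ$ supplied by that proof, which satisfies
$$d(\psi_{i,j}(a))=d(a)+r(t(a))-r(h(a))$$
for every arrow $a$ of $Q(\nu)$. Inspection of the explicit formulas for $r$ in each case treated there shows that $r$ may be chosen supported on $\{i,j\}$ with $r(i)=-r(j)$, so that
$$r\circ\psi_{i,j}=-r$$
as functions on $Q(\nu)_0$.

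Next, for a path $p=a_n\cdots a_1$ from $u$ to $v$ in $Q(\nu)$, summing the arrow identity over $k=1,\ldots,n$ makes the internal $r$-contributions telescope and yields
$$d(\psi_{i,j}(p))=d(p)+r(u)-r(v).$$
Thus $\psi_{i,j}$ shifts the $d$-degree of any $u$-to-$v$ path by the coboundary $r(u)-r(v)$.

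With these two ingredients in place, I would finish as follows. Since $\psi_{i,j}^2=\mathrm{id}$, one has $\rho'(x)=\psi_{i,j}(\rho(\psi_{i,j}(x)))$ for every $x$. Fix a path $p$ of $d$-degree $n$ from $u$ to $v$, so that $\psi_{i,j}(p)$ has $d$-degree $n+r(u)-r(v)$ and runs from $\psi_{i,j}(u)$ to $\psi_{i,j}(v)$. Because $\rho$ is the identity on vertices and is graded with respect to $d$, the element $\rho(\psi_{i,j}(p))$ is a (possibly infinite) $\CC$-linear combination of paths from $\psi_{i,j}(u)$ to $\psi_{i,j}(v)$, each of $d$-degree $n+r(u)-r(v)$. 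Applying $\psi_{i,j}$ to any such path $q'$ and using the telescoping formula together with $r\circ\psi_{i,j}=-r$ gives
$$d(\psi_{i,j}(q'))=[n+r(u)-r(v)]+r(\psi_{i,j}(u))-r(\psi_{i,j}(v))=n.$$
Hence $\rho'(p)$ lies in the $d$-degree-$n$ component; extending by $\CC$-linearity and continuity over $\RA{Q(\nu)}$ concludes that $\rho'$ is graded with respect to $d$.

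The only point that genuinely needs verifying is the antisymmetry $r\circ\psi_{i,j}=-r$, and this I expect to be the main obstacle. It is immediate from the explicit formulas for $r$ in the proof of Proposition \ref{prop:pop-is-quiver-grading-equivalence}, and no calculation beyond what already appears there is required.
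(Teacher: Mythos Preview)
Your proposal is correct and follows essentially the same approach as the paper's own proof: both use the function $r$ from Proposition~\ref{prop:pop-is-quiver-grading-equivalence}, observe that it can be chosen with $r\circ\psi_{i,j}=-r$, extend the arrow identity to paths by telescoping, and then carry out the identical three-step degree computation on $\psi_{i,j}\circ\rho\circ\psi_{i,j}$. The only cosmetic difference is that you phrase the antisymmetry as ``$r$ supported on $\{i,j\}$ with $r(i)=-r(j)$'' whereas the paper writes it directly as $r(\psi_{i,j}(k))=-r(k)$.
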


\begin{proof}
By Proposition \ref{prop:pop-is-quiver-grading-equivalence}, there exists a map $r:Q(\nu)_0\to \mathbb Z$ such that for any arrow $a$ in $Q(\nu)$ we have $d(\psi_{i,j}(a))=d(a)+r(t(a))-r(h(a)).$
Since $\psi_{i,j}=\psi_{i,j}^{-1}$ this map can be chosen in such a way that $r(\psi_{i,j}(k))=-r(k)$ for each vertex $k$ of $Q(\nu)$ (see for example the choice of $r$ in the proof of Proposition \ref{prop:pop-is-quiver-grading-equivalence}).
 
Then we deduce that for any vertices $k$ and $\ell$, the automorphism $\psi_{i,j}$ sends any homogenous element $w$ which is a linear combination of paths from $k$ and $\ell$ to an homogenous element of degree  $d(\psi_{i,j}(w))=d(w)+r(k)-r(\ell)$ which is a linear combination of paths from $\psi_{i,j}(k)$ to $\psi_{i,j}(\ell)$. 
 
Let $\alpha$ be a path from $k$ to $\ell$ in $Q(\nu)$. Then $\psi_{i,j}(\alpha)$ is a path from $\psi_{i,j}(k)$ to $\psi_{i,j}(\ell)$ in $Q(\nu)$, hence is homogenous with respect to $d$ and has degree $d(\psi_{i,j}(\alpha))=d(\alpha)+r(k)-r(\ell)$. Now since $\rho$ is a graded automorphism which is identity on the vertices, $\rho(\psi_{i,j}(\alpha))$ is a linear combination of paths from $\psi_{i,j}(k)$ to $\psi_{i,j}(\ell)$ and is homogenous of degree $d(\alpha)+r(k)-r(\ell)$.  Hence $\psi_{i,j}\circ\rho\circ\psi_{i,j}(\alpha)$ is a linear combination of paths from $k$ to $\ell$ and is homogenous of degree:
$$d(\psi_{i,j}\circ\rho\circ\psi_{i,j}(\alpha))=(d(\alpha)+r(k)-r(\ell))+r(\psi_{i,j}(k))-r(\psi_{i,j}(\ell))=d(\alpha).$$

This finishes the proof.  

\end{proof}

\subsection{Proof of Theorem \ref{thm:flip=>graded-right-equivalence}}\label{subsection:proof}

Recall that to a general tagged triangulation $\tau$, Fomin-Shapiro-Thurston associate an ideal triangulation $\tau^\circ$ (cf. the second paragraph of \cite[Definition 9.2]{FST}). The quiver $Q(\tau)$ is then defined to be the quiver of $\tau^\circ$, but with $\tau$ as vertex set rather than $\tau^\circ$ (cf. \cite[Definition 9.6]{FST}).  It can be easily verified (with the aid of \cite[Examples 27, 28 and subsequent paragraphs]{LF1} if necessary) that $\Stau$ can be obtained from $S(\tau^\circ)$ by changing the signs of some of the terms that arise from punctures. Hence the potential $S(\tau^\circ)$ is homogenous of degree $1$ with respect to $d$ and the right equivalence $(Q(\tau),S(\tau))\cong (Q(\tau^\circ),S(\tau^\circ))$ (see \cite[Proposition 10.2]{LF4}) obviously preserves the grading $d$. For the same reasons we also have a graded right equivalence 

$$(Q(\flip_i(\tau)),S(\flip_i(\tau)),\mu_i^L(d))\underset{\mathbb Z}{\cong}(Q(\flip_{i^\circ}(\tau^\circ)),S(\flip_{i^\circ}(\tau^\circ)),\mu_i^L(d)),$$ where $i^\circ$ is the arc corresponding to $i$ in $\tau^\circ$.  Therefore, we can assume that $\tau$ is an ideal triangulation.

\medskip

\noindent
\emph{Case 1: $i$ is not the folded side of a self-folded triangle.}

This case corresponds to the case where $\flip_i(\tau)$ is also an ideal triangulation. Then there exists a right equivalence between $(\mu_i(Q(\tau)),\mu_i(\Stau))$ and $(Q(\flip_i(\triangtau)),S(\flip_i(\triangtau)))$ which is explicitely described in the proof of  \cite[Theorem 30]{LF1}, depending on the local configuration around $i$. It is not difficult to check that in each case the right equivalence is compatible with any grading on  $\mu_i(Q(\tau))$ such that $\mu_i(\Stau))$ is homogenous of degree $1$.

\medskip

\noindent
\emph{Case 2: $i$ is the folded side of a self-folded triangle.}

Let $j\in\tau$ be the loop that encloses $i$ and that, together with the latter, forms the alluded self-folded triangle. 
Then there exists a right equivalence $$\Theta:(Q(\tau),S(\tau))\rightarrow\mu_i^{-1}(Q(\flip_i(\tau)),S(\flip_i(\tau)))$$ by  \cite[Theorem 8.1]{LF4}. The proof of this result uses auxillary potentials $W(\tau',i,j)$ associated to any ideal triangulation that contains $i$ and $j$.  
The right equivalence $\Theta$ is defined to be the following composition: 
\[\begin{array}{ll}
(Q(\tau),S(\tau),d)  & =(Q(\flip_s(\sigma)),S(\flip_s(\sigma)),d) \\
& \overset{\Theta_1}{\longrightarrow} \mu_s^L(Q(\sigma),S(\sigma),(\mu_s^{L})^{-1}(d)) \\ 
 & \overset{\Theta_2}{\longrightarrow} \mu_s^L(Q(\sigma),W(\sigma,i,j),(\mu_s^{L})^{-1}(d))  \\
  &  \overset{\Theta_3}{\longrightarrow} (Q(\flip_s(\sigma)),W(\flip_s(\sigma),i,j),d) = (Q(\tau),W(\tau,i,j),d) \\
   
    & \overset{\Theta_4}{\longrightarrow} (\mu_i^L)^{-1}(Q(\flip_i(\tau)),S(\flip_i(\tau)), \mu_i^L(d))
   
\end{array}\]
where $\sigma$ is a particular ideal triangulation containing $i$ and $j$ as in \cite[Proposition 6.4]{LF4} and $s=(i_1,\ldots,i_\ell)$ is a sequence of flips relating $\sigma$ to $\tau=\flip_s(\sigma)$ that does not contain $i$ and $j$ and such that any intermediate triangulation is ideal. 

Let us now explain why all these right equivalences are graded. 

The right equivalence $\Theta_1$ is graded by Case 1, since all intermediate triangulations between $\tau$ and $\sigma$ are ideal. 

By definition, and since $\sigma$ is a triangulation as in \cite[Proposition 6.4]{LF4}, $W(\sigma,i,j)$ is the image under $\psi_{i,j}$ of $S(\sigma)$ which is homogeneous of degree 1 with respect to $(\mu_s^L)^{-1}(d)$. Consequently, $W(\sigma,i,j)$ is homogeneous of degree 1 with respect to $(\mu_s^L)^{-1}(d)$ by Corollary \ref{cor:psiW-homogenous}. The right equivalence $\Theta_2$ is defined in the proof of \cite[Proposition 6.4]{LF4} as a limit $\lim_{n\to\infty}\varphi_n\ldots\varphi_1$ of right equivalences. One checks by induction that each $\varphi_n$ is graded. 

The right equivalence $\Theta_3$ is the composition $\psi_{i,j}\circ\Phi\circ\psi_{i,j}^{-1}$  where $\Phi$ is the right equivalence $\mu_s^L(Q(\sigma),S(\sigma),(\mu_s^L)^{-1}(d))\longrightarrow (Q(\flip_s(\sigma)),S(\flip_s\sigma),d)$ which is graded by Case 1. Therefore by Corollary \ref{cor:psi-conjugacy} the right equivalence $\Theta_3$ is graded.

Finally the right equivalence $\Theta_4$ follows from \cite[Proposition 7.1]{LF4}. The proof is divided in 4 cases, in each of which an explicit right equivalence is exhibited. In all 4 cases, the exhibited right-equivalence sends each arrow to a scalar multiple of itself, and thus is graded.

Consequently, the right equivalence $\Theta$ is graded and Theorem \ref{thm:flip=>graded-right-equivalence} follows.

\qed

We end this section by the following result which is a consequence of Proposition \ref{prop:pop-is-quiver-grading-equivalence} and which will be used in the proof of Theorem \ref{theorem cluster cat}.

\begin{lemma}\label{lemma:tau-and-tau-circ-equivalent-grading} Let $(\tau,d)$ be a graded valency $\geq 2$-tagged triangulation. Then there exists a sequence of flips $s$ such that $\flip_s(\tau^\circ)=\tau$ and such that $d$ is equivalent to $\mu_s^L(d)$ as gradings on $Q(\tau^\circ)$ (through the canonical quiver isomorphism $Q(\tau) \simeq Q(\tau^\circ)$). 

Therefore the graded algebras $\mathcal{P}(Q(\tau),S(\tau),\mu^L_s(d))$ and $\mathcal{P}(Q(\tau^\circ),S(\tau^\circ),d)$ are graded equivalent.
\end{lemma}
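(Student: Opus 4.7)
The plan is to build $s$ by flipping the folded sides of the self-folded triangles of $\tau^\circ$, one per ``notched'' puncture. By the definition of $\tau^\circ$ (cf.\ \cite[Definition 9.2]{FST}), the punctures $p_1,\ldots,p_m$ of $\surf$ enclosed by self-folded triangles of $\tau^\circ$ are precisely those at which $\tau$ has notched incident arcs; let $i_k$ denote the folded side at $p_k$ and $j_k$ the enclosing loop, and set $s:=(i_1,\ldots,i_m)$. Under the valency $\geq 2$ hypothesis the self-folded triangles of $\tau^\circ$ occur in pairwise disjoint local pictures, so the flips in $s$ commute, every intermediate tagged triangulation remains valency $\geq 2$, and $\flip_s(\tau^\circ)=\tau$ by the definition of the flip of a folded side in the tagged setting.

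The crucial observation is that at each step the mutation at $i_k$ does not truly change the quiver. A local inspection of the arrows around a self-folded triangle (cf.\ Figure~\ref{Fig:sf_triangles}) shows that $\mu_{i_k}$ acts on the current quiver as the swap automorphism $\psi_{i_k,j_k}$, and tracing the identifications along $s$ one sees that the canonical quiver isomorphism $Q(\tau)\simeq Q(\tau^\circ)$ coincides with the composition $\psi_{i_m,j_m}\circ\cdots\circ\psi_{i_1,j_1}$. Proposition~\ref{prop:pop-is-quiver-grading-equivalence} applied inductively at each step then shows that $\mu_s^L(d)$ and $d$ become equivalent gradings on $Q(\tau^\circ)$ once pulled back through this canonical isomorphism.

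For the ``therefore'' statement, combining the grading equivalence just obtained with Lemma~\ref{lemma:d1_d2-equiv=>muL(d1)_muL(d2)-equiv} yields a graded equivalence
$$\mathcal{P}(Q(\tau^\circ),S(\tau^\circ),d)\underset{\mathbb Z}{\cong}\mathcal{P}(Q(\tau^\circ),S(\tau^\circ),\mu_s^L(d)).$$
The remaining identification with $\mathcal{P}(Q(\tau),S(\tau),\mu_s^L(d))$ is supplied by the fact, recalled at the beginning of Subsection~\ref{subsection:proof}, that $S(\tau)$ and $S(\tau^\circ)$ differ only by signs of puncture terms, so that the canonical quiver isomorphism $Q(\tau)\simeq Q(\tau^\circ)$ induces a degree-preserving right equivalence of graded QPs, hence a graded isomorphism of Jacobian algebras. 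The main obstacle will be pinning down the factorization $Q(\tau)\simeq Q(\tau^\circ)=\psi_{i_m,j_m}\circ\cdots\circ\psi_{i_1,j_1}$ — a case-by-case local check following Figure~\ref{Fig:sf_triangles}, where one has to keep track of how each single flip at a folded side re-tags the configuration at $p_k$ and how the arrows incident to the affected vertices transform under the mutation rule of Definition~\ref{def:mutation-of-graded-quiver}.
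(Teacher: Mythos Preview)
Your approach rests on a misreading of the construction $\tau\mapsto\tau^\circ$. Under the valency $\geq 2$ hypothesis, the ideal triangulation $\tau^\circ$ has \emph{no} self-folded triangles at all: a self-folded triangle would enclose a puncture of valency $1$. A self-folded triangle in $\tau^\circ$ arises only from a puncture with \emph{mixed} tagging in $\tau$ (one plain and one notched arc with the same underlying arc), whereas a ``notched puncture'' here is one at which \emph{all} incident arcs are notched. In the valency $\geq 2$ setting the mixed case is excluded, so $\tau^\circ$ has the same underlying arcs as $\tau$ with the notching simply forgotten. Your sequence $s=(i_1,\ldots,i_m)$ of ``folded sides'' is therefore empty precisely when it matters, and $\flip_s(\tau^\circ)=\tau^\circ\neq\tau$ whenever $\tau$ has a notched puncture. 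The rest of the argument (the claim that $\mu_{i_k}$ acts as $\psi_{i_k,j_k}$, the factorization of the canonical isomorphism) is built on this non-existent local picture.

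The paper's proof takes a genuinely different route. Since $\tau$ and $\tau^\circ$ differ only in tagging, one cannot pass between them by a single local flip at each notched puncture; instead, for each notched puncture $p$ one first flips (via a sequence $t$ through valency $\geq 2$ ideal triangulations) to an auxiliary triangulation $\tau'$ in which $p$ has valency exactly $2$, performs there the two-step maneuver $\flip_i\circ\psi_{i,j}\circ\flip_i$ (realized as honest flips by $\flip_j\circ\flip_i$ after relabelling) which toggles the tag at $p$, and then flips back via $t^-$. The grading computation then reduces, via Proposition~\ref{prop:pop-is-quiver-grading-equivalence} and Remark~\ref{rmk:gradedmut}(3), to showing that $\mu_{t^-}^L\mu_i^L\psi_{i,j}^*\mu_i^L\mu_t^L(d)$ is equivalent to $d$, and one concludes by induction on the number of notched punctures.
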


\begin{proof}
We start by a piece of notation. For a valency $\geq 2$-tagged triangulation $\sigma$ and a puncture $p$, we denote by $t_p(\sigma)$ the valency $\geq 2$-tagged triangulation obtained from $\sigma$ by changing the tagging at $p$. Notice that if $j$ is an arc of $\sigma$ such that $\flip_j(\sigma)$ is also a valency $\geq 2$-tagged triangulation, then we clearly have $t_p\circ\flip_j(\sigma)=\flip_j\circ t_p(\sigma)$.

We proceed by induction on the number $\ell$ of punctures of $\tau$ that are notched. If $\ell=0$, the result trivially holds. Assume $\ell>0$ and let $p$ be a notched puncture. Let $\tau'$ be a triangulation with the following properties:

\begin{itemize}
\item $\tau'$ is valency $\geq 2$-tagged triangulation;
\item $\tau$ and $\tau'$ have the same tagging at each puncture;
\item $p$ has valency $2$ at $p$.
\end{itemize}

Then there exists a sequence of flips $t=(t_1,\ldots, t_k)$ such that $\flip_t(\tau^\circ)=\tau'^\circ$ and such that any intermediate triangulation is a valency $\geq 2$-ideal triangulation. Since $\flip_t$ commutes with any $t_{q}$, we have $\flip_t(\tau)=\tau'$ and any intermediate triangulation is a valency $\geq 2$-tagged triangulation.

Denote by $i$ and $j$ the arcs incident to $p$ in $\tau'$. Then one has $\flip_i\circ\psi_{i,j}\circ\flip_i(\tau')=t_p(\tau')$ as shown in the following picture.

\[\scalebox{0.9}{\begin{tikzpicture}[scale=0.8,>=stealth]
\draw (0,-2)--node [fill=white, inner sep=2pt]{$i$}(0,0)--node [fill=white, inner sep=2pt]{$j$}(0,2).. controls (1,1.5) and (1,-1.5).. (0,-2).. controls (-1,-1.5) and (-1,1.5)..(0,2);
\node at (0,-2) {$\bullet$};
\node at (0,2) {$\bullet$};
\node at (0,0) {$\bullet$};
\node at (0,0.3) {$\bowtie$};
\node at (0,-0.3) {$\bowtie$};

\draw[<->] (1,0)--node[yshift=10pt]{$\flip_i$} (3,0);

\draw (4,0).. node [fill=white, inner sep=2pt]{$i$} controls (4.2,0.2) and (4.2,1.8)..(4,2);
\draw (4,0).. node [fill=white, inner sep=2pt]{$j$} controls (3.8,0.2) and (3.8,1.8)..(4,2);
\draw(4,2).. controls (5,1.5) and (5,-1.5).. (4,-2).. controls (3,-1.5) and (3,1.5)..(4,2);
\node at (4,-2) {$\bullet$};
\node at (4,2) {$\bullet$};
\node at (4,0) {$\bullet$};
\node at (3.9,0.4) {$\bowtie$};

\draw[<->] (5,0)--node[yshift=10pt]{$\psi_{i,j}$} (7,0);

\draw (8,0).. node [fill=white, inner sep=2pt]{$j$} controls (8.2,0.2) and (8.2,1.8)..(8,2);
\draw (8,0).. node [fill=white, inner sep=2pt]{$i$} controls (7.8,0.2) and (7.8,1.8)..(8,2);
\draw(8,2).. controls (9,1.5) and (9,-1.5).. (8,-2).. controls (7,-1.5) and (7,1.5)..(8,2);
\node at (8,-2) {$\bullet$};
\node at (8,2) {$\bullet$};
\node at (8,0) {$\bullet$};
\node at (7.9,0.4) {$\bowtie$};

\draw[<->] (9,0)--node[yshift=10pt]{$\flip_{i}$} (11,0);

\draw (12,-2)--node [fill=white, inner sep=2pt]{$i$}(12,0)--node [fill=white, inner sep=2pt]{$j$}(12,2).. controls (13,1.5) and (13,-1.5).. (12,-2).. controls (11,-1.5) and (11,1.5)..(12,2);
\node at (12,-2) {$\bullet$};
\node at (12,2) {$\bullet$};
\node at (12,0) {$\bullet$};

\end{tikzpicture}}\]

 Denote by $t^{-}$ the sequence $(t_k,\ldots, t_1)$. Since $t_p$ commutes with $\flip_{t}$, we get $$\flip_{t^{-}}\circ \flip_i\circ \psi_{i,j}\circ\flip_i\circ\flip_t(\tau)=\flip_{t^{-}}\circ \flip_i\circ \psi_{i,j}\circ\flip_i(\tau')=\flip_{t^-}\circ t_p(\tau')=t_p\circ\flip_{t^-}(\tau')=t_p(\tau).$$

Note that if we denote by $t'$ the sequence of flips $t^-=(t_k,\ldots,t_1)$ where each occurence of $i$ is replaced by $j$ and vice versa, then the triangulation $\flip_{t'}\circ \flip_j\circ\flip_i\circ\flip_t(\tau)$ is the triangulation $t_p(\tau)$ where the labeling of the arcs $i$ and $j$ are switched.  

Applying the above sequence of flips to the grading $d$ we obtain 
\begin{align*}
\mu_{t^-}^L\circ \mu_i^L\circ\psi_{i,j}^*\circ\mu_i^L\circ\mu_t^L(d) & =\mu_{t-}^L\mu_i^L((\mu_i^L\mu_t^L(d))\circ\psi_{i,j}) \\
& \underset{\gr}{\sim} \mu_{t^-}^L\mu_i^L (\mu_i^L\mu_t^L(d)) & \textrm{by Proposition \ref{prop:pop-is-quiver-grading-equivalence}}\\
& \underset{\gr}{\sim} d & \textrm{by Remark \ref{rmk:gradedmut} (3) }.
\end{align*}

Now $t_p\triangtau$ is a nice tagged triangulation with $\ell-1$ notched punctures, hence by induction hypothesis there exists a sequence $s$ such that $\flip_s(t_p\tau)=(t_p\tau)^\circ=\triangtau^\circ$  and $\mu_s^L(d)$ is graded equivalent to $d$. Therefore we get the first statement of the lemma.

\medskip
Therefore the algebras  $\mathcal{P}(Q(\tau),S(\tau),d)$ and $\mathcal{P}(Q(\tau),S(\tau),\mu_s^L(d))$ are graded equivalent via the identity. Moreover, as mentioned at the beginining of the proof of Theorem \ref{thm:flip=>graded-right-equivalence}, the right equivalence $(Q(\tau),S(\tau))\to (Q(\tau^\circ),S(\tau^{\circ}))$ is graded for any grading making $S(\tau)$ (hence $S(\tau^\circ)$) homogenous of degree $1$. Thus the graded algebras $\mathcal{P}(Q(\tau),S(\tau),d)$ and $\mathcal{P}(Q(\tau^\circ),S(\tau^\circ),d)$ are isomorphic and we get the second statement.

\end{proof}


\section{Derived Invariants for surface cut algebras}\label{section5}


\subsection{Derived and cluster categories for algebras of global dimension 2}\label{sect::derived}
We start by briefly recalling some definitions and results of \cite{Amiot-gldim2,AO-cl-equiv-der-equiv} that will be used in the proof of the main theorem. For basic notions on $2$-Calabi-Yau categories and cluster-tilting theory we refer to \cite{Amiot-survey}.

For a finite dimensional algebra $\Lambda$ with finite global dimension, we denote by $\mathcal{D}^b(\Lambda)$ the bounded derived category of finitely generated $\Lambda$-modules. We denote by $\mathbb S_2$ the autoequivalence $-\overset{L}\otimes_\Lambda \Hom_k(\Lambda,k)[-2]$ of $\mathcal{D}^b(\Lambda)$. The algebra $\Lambda$ is said to be $\tau_2$-finite if the functor $\tau_2=H^0(\mathbb S _2):\mod \Lambda \to \mod \Lambda$ is nilpotent.

Let $\Lambda$ be a finite dimensional algebra of global dimension $\leq 2$ which is $\tau_2$-finite. Its cluster category  $\mathcal{C}_2(\Lambda)$ is defined in \cite{Amiot-gldim2}. It is a $2$-Calabi-Yau category with cluster-tilting objects. It comes together with a triangle functor $\pi:\mathcal{D}^b(\Lambda)\to \mathcal{C}_2(\Lambda)$ and with a canonical algebra isomorphism $$\End_{\mathcal{C}_2(\Lambda)}(\pi M)\cong \bigoplus_{p\in\mathbb Z}\Hom_{\mathcal{D}^b(\Lambda)}(M,\mathbb{S}_2^{-p}M), \textrm{ for all }M\in \mathcal{D}^b(\Lambda).  $$
Hence each algebra $\End_{\mathcal{C}_2(\Lambda)}(\pi M)$ inherits a natural $\mathbb Z$-grading. Furthermore $\pi\Lambda$ is a cluster-tilting object in $\mathcal{C}_2(\Lambda)$.

Note that if $X$ and $Y$ are indecomposable objects in $\mathcal{D}^b(\Lambda)$ such that $\pi M\simeq \pi N$ in $\mathcal{C}_2(\Lambda)$, then there exists an integer $r$ such that $M\simeq \mathbb S_2^rN$. Therefore, if $M\simeq M_1\oplus\cdots\oplus M_n$ and $N\simeq N_1\oplus\cdots\oplus N_n$ are basic objects in $\mathcal{D}^b(\Lambda)$ such that $\pi M\simeq \pi N$, then (up to renumbering the summands of $N$) there exist integers $r_i$ such that $M_i\simeq \mathbb S^{r_i}_2N_i$, which exactly means that the graded algebras $\End_{\mathcal{C}_2(\Lambda)}(\pi M)$ and $\End_{\mathcal{C}_2(\Lambda)}(\pi N)$ are graded equivalent.

\medskip
For $T$ a basic object in $\mathcal{D}^b(\Lambda)$ such that $\pi T$ is a cluster-tilting object in $\mathcal{C}_2(\Lambda)$ a notion of graded mutation is introduced in \cite{AO-cl-equiv-der-equiv}. This notion coincides with the mutation in $\mathcal{C}_2(\Lambda)$ in the following sense: For any $T_i$ indecomposable summand of $T$ we have $$\pi(\mu_{T_i}^L(T))\cong \mu_{\pi(T_i)}(\pi T).$$ Moreover assume there exists an isomorphism of graded algebra $$\End_{\mathcal{C}_2(\Lambda)}(\pi T)\underset{\mathbb Z}{\cong}\mathcal{P}(Q,S,d)$$ for a certain graded QP $(Q,S,d)$ and assume that there is no loops and no $2$-cycles in $Q$ at the vertex $i$ corresponding with $T_i$, then we have $$\End_{\mathcal{C}_2(\Lambda)}(\pi (\mu_i^LT))\underset{\mathbb Z}{\cong}\mathcal{P}(\mu_i^L(Q,S,d)).$$

\subsection{Surface cut algebras}

\begin{definition}
Let $\triangtau$ be a valency $\geq 2$-triangulation on $\surf$.
\begin{enumerate}
\item An \emph{admissible cut} for $\tau$ is a degree-1 map $d:\Qtau_1\to \mathbb Z$ whose image is contained in $\{0,1\}$ and satisfies $d(a)=0$ for every arrow $a\in\Qtau_1$ not appearing in $\Qtau_2$. 
\item
If $d$ is an admissible cut, then the degree zero subalgebra $\Lambda(\tau,d)$ of $\mathcal{P}(Q(\tau), S(\tau),d)$ is called \emph{surface cut algebra associated with $(\tau,d)$} if its global dimension is $\leq 2$.
\end{enumerate}
\end{definition}

We will see in the next sections that, given a triangulation, it is not always possible to find an admissible cut, and that, given an admissible cut, the corresponding degree zero subalgebra of the Jacobian algebra does not always have global dimension at most two. However, we will show (Corollary \ref{cor:existence-surface-algebra}) that for any surface $\surf$ with non empty boundary, there exists a valency $\geq 3$-triangulation $\tau$ and an admissible cut giving rise to a surface cut algebra.

The following result allows us to use all the results mentioned in the previous section in the special case of surface cut algebras.

\begin{proposition}\label{prop::cutAlgebra}
Let $\surf$ be a marked surface with non-empty boundary.  Let $\Lambda(\tau,d)$ be a surface cut algebra associated to a triangulation $\tau$ and an admissible cut $d$. Then the following properties hold for $\Lambda$:
\begin{itemize}
\item The algebra $\Lambda$ is $\tau_2$-finite;
\item There is an isomorphism of graded algebras $\End_{\mathcal{C}_2(\Lambda)}(\pi \Lambda)\cong \mathcal{P}(Q(\tau), S(\tau),d)$;
\item All cluster-tilting objects in $\mathcal{C}_2(\Lambda)$ can be linked by a sequence of mutations.
\end{itemize}
\end{proposition}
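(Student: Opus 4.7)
The strategy is to leverage Keller's theory of deformed Calabi-Yau completions, which for a cut algebra of global dimension $\leq 2$ identifies the Jacobian algebra of the ambient QP with the graded $3$-Calabi-Yau completion $\Pi_3(\Lambda) := T_\Lambda(\mathrm{Ext}^2_\Lambda(D\Lambda,\Lambda)[-2])$ of $\Lambda$. Once this identification is in place, the first two bullets follow from the Jacobi-finiteness of $(\Qtau,\Stau)$ for surfaces with boundary, and the third from the compatibility between mutation and flip combined with the connectedness of the flip graph.

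First I would invoke Keller's theorem on cut algebras: since $\Lambda=\Lambda(\tau,d)$ is the degree-zero part of $\mathcal{P}(\Qtau,\Stau,d)$ under an admissible cut and has global dimension at most $2$, there is a canonical isomorphism of graded algebras
\[
\mathcal{P}(\Qtau,\Stau,d) \;\underset{\mathbb Z}{\cong}\; \Pi_3(\Lambda).
\]
Since $\surf$ has non-empty boundary, the QP $(\Qtau,\Stau)$ is Jacobi-finite by \cite{LF1}, so $\mathcal{P}(\Qtau,\Stau)$ is finite dimensional, hence so is $\Pi_3(\Lambda)$. Finite dimensionality of $\Pi_3(\Lambda)$ is equivalent to nilpotency of $\tau_2=H^0(\mathbb S_2)$ on $\mod\Lambda$ (see \cite[\S 4]{Amiot-gldim2}), yielding the first bullet. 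For the second bullet, Amiot's construction of $\mathcal{C}_2(\Lambda)$ provides a canonical graded isomorphism
\[
\End_{\mathcal{C}_2(\Lambda)}(\pi\Lambda)\;\cong\;\bigoplus_{p\in\mathbb Z}\Hom_{\mathcal{D}^b(\Lambda)}(\Lambda,\mathbb S_2^{-p}\Lambda)\;\cong\;\Pi_3(\Lambda),
\]
and composing with Keller's isomorphism produces the desired graded identification with $\mathcal{P}(\Qtau,\Stau,d)$.

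For the third bullet I would combine the flip/mutation dictionary of Theorem \ref{thm:flip=>graded-right-equivalence} with the connectedness of the flip graph of tagged triangulations of $\surf$ established in \cite{FST}: every tagged triangulation $\tau'$ can be reached from $\tau$ by a finite sequence of flips, so the corresponding cluster-tilting object in $\mathcal{C}_2(\Lambda)$ is reachable from $\pi\Lambda$ by iterated mutation (using also the compatibility between mutation of cluster-tilting objects and graded mutation of graded QPs recalled in Section \ref{sect::derived}). To promote this to a statement about \emph{all} cluster-tilting objects in $\mathcal{C}_2(\Lambda)$, I would invoke the classification, valid for surfaces with non-empty boundary, of cluster-tilting objects of the generalized cluster category of $(\Qtau,\Stau)$ as being in bijection with tagged triangulations of $\surf$ (obtained from the work of Br\"ustle-Yang and Plamondon, via the general principle that reachability from the initial cluster-tilting object exhausts the class for Jacobi-finite surface QPs).

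The delicate step is the last one: the flip-mutation dictionary is essentially in hand, but deducing that \emph{every} cluster-tilting object lies in the mutation class of $\pi\Lambda$ uses nontrivial reachability results in $2$-Calabi-Yau cluster categories coming from surfaces with boundary. Modulo invoking this classification, the argument consists of stringing together Keller's identification of the Jacobian algebra with the $3$-Calabi-Yau completion, Jacobi-finiteness of surface QPs with boundary, and the flip-mutation compatibility already proved in Section \ref{section4}.
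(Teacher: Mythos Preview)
Your approach is correct and essentially coincides with the paper's. The paper simply refers to \cite[Theorem 3.8]{Amiot-Grimeland}, whose proof is precisely the Keller/$\Pi_3$ argument you outline: the isomorphism $\mathcal{P}(\Qtau,\Stau,d)\underset{\mathbb Z}{\cong}\Pi_3(\Lambda)$ together with Jacobi-finiteness for surfaces with boundary yields $\tau_2$-finiteness and the graded identification $\End_{\mathcal{C}_2(\Lambda)}(\pi\Lambda)\cong\mathcal{P}(\Qtau,\Stau,d)$.

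For the third bullet there is a small mismatch worth noting. You route the argument through Theorem~\ref{thm:flip=>graded-right-equivalence}, but the \emph{graded} compatibility is irrelevant here; all that is needed is the ungraded fact that flip of triangulations corresponds to QP-mutation (already in \cite{LF1}) and hence to mutation of cluster-tilting objects. More importantly, the ``delicate step'' you flag---that every cluster-tilting object in $\mathcal{C}_2(\Lambda)$ is reachable from $\pi\Lambda$---is exactly what the paper extracts from \cite[Corollary~3.5]{CKLP} combined with \cite[Theorem~7.11]{FST}; your references to ``Br\"ustle--Yang and Plamondon'' are imprecise (and not in the bibliography), whereas CKLP gives the reachability statement directly for Jacobi-finite surface QPs with boundary. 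With that citation adjusted, your argument matches the paper's.
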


\begin{proof}
The proof is exaclty similar to the proof of Theorem 3.8 in \cite{Amiot-Grimeland}. The fact that all cluster tilting object are related by mutations follows from a combination of \cite[Corollary 3.5]{CKLP} and \cite[Theorem 7.11]{FST}.
\end{proof}

\subsection{Main result}

\begin{theorem}\label{theorem cluster cat}
Let $\surf$ be a surface with non-empty boundary. 
Let $\tau$ and $\tau'$ be valency $\geq 2$-triangulations of $\surf$, with respective admissible cuts $d$ and $d'$.  
Assume that the corresponding cut algebras $\Lambda=\Lambda(\tau,d)$ and $\Lambda'=\Lambda(\tau', d')$ have global dimension $\leq 2$. Then the following statements are equivalent:
\begin{enumerate}
\item there is a triangle equivalence $\mathcal{D}^b(\Lambda)\cong\mathcal{D}^b(\Lambda')$;
\item there exists an orientation preserving homeomorphism $\Phi:\Sigma\to \Sigma$ with $\Phi(\mathbb M)=\mathbb M$ such that for any closed curve $\gamma$, $d(\overline{\gamma}^\tau)=d'(\overline{\Phi(\gamma)}^{\tau'})$.

\end{enumerate}
\end{theorem}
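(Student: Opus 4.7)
I would adapt the Amiot--Grimeland argument from the unpunctured setting, running the now-standard dictionary between graded right equivalences of graded Jacobian algebras, graded flips of graded tagged triangulations, graded mutations of cluster-tilting objects in $\mathcal{C}_2(\Lambda)$, and derived equivalences of the degree-$0$ cut subalgebras. The technical payoff of Sections \ref{section3} and \ref{section4} (Proposition \ref{fundamental}, Theorem \ref{thm:flip=>graded-right-equivalence}, Lemmas \ref{lemma:equivalence-of-gradings-characterization}, \ref{lemma:graded-flip-preserves-evaluation}, \ref{lemma:tau-and-tau-circ-equivalent-grading}) is exactly that every link in this dictionary now works in the punctured, valency $\geq 2$ setting, while Proposition \ref{prop::cutAlgebra} lets me move from graded Jacobian algebras to cluster-tilting objects in $\mathcal{C}_2(\Lambda)$.

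\smallskip

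\noindent\textbf{Direction $(2)\Rightarrow(1)$.} First I would transport $(\tau,d)$ by $\Phi$ to obtain the graded triangulation $(\Phi(\tau),d\circ\Phi^{-1})$; this is graded isomorphic to $(\tau,d)$, and the hypothesis rewrites as $(d\circ\Phi^{-1})(\overline{\gamma}^{\Phi(\tau)})=d'(\overline{\gamma}^{\tau'})$ for every closed curve $\gamma$. I would then connect $\Phi(\tau)$ and $\tau'$ by a sequence of flips through valency $\geq 2$-(tagged) triangulations, using \cite{LF2} and, where necessary, Lemma \ref{lemma:tau-and-tau-circ-equivalent-grading} to pass between tagged and ideal triangulations, and propagate the grading along these flips. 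Lemma \ref{lemma:graded-flip-preserves-evaluation} guarantees that the resulting degree-$1$ map on $Q(\tau')$ coincides with $d'$ on every closed curve, so by Lemma \ref{lemma:equivalence-of-gradings-characterization} it is equivalent to $d'$. Combining Theorem \ref{thm:flip=>graded-right-equivalence} with Lemma \ref{lemma:d1_d2-equiv=>muL(d1)_muL(d2)-equiv} then yields $\mathcal{P}(Q(\tau),S(\tau),d)\underset{\mathbb{Z}}{\cong}\mathcal{P}(Q(\tau'),S(\tau'),d')$. To upgrade this graded equivalence to a triangle equivalence, I would invoke Proposition \ref{prop::cutAlgebra}, identifying each side with $\End_{\mathcal{C}_2(\Lambda)}(\pi\Lambda)$ and $\End_{\mathcal{C}_2(\Lambda')}(\pi\Lambda')$ respectively, and reread the flip sequence as a sequence of graded cluster-tilting mutations producing a cluster-tilting object $T\in\mathcal{C}_2(\Lambda)$ with $\End_{\mathcal{C}_2(\Lambda)}(\pi T)$ graded isomorphic to the graded Jacobian algebra of $(\tau',d')$. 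Lifting $T$ to $\mathcal{D}^b(\Lambda)$ and absorbing the grading shifts from Definition \ref{def:graded-equiv-of-algebras} by applying suitable powers of $\mathbb{S}_2$ to indecomposable summands furnishes a tilting complex with endomorphism algebra $\Lambda'$, and Rickard's theorem supplies the desired triangle equivalence.

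\smallskip

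\noindent\textbf{Direction $(1)\Rightarrow(2)$.} A triangle equivalence $\mathcal{D}^b(\Lambda)\cong\mathcal{D}^b(\Lambda')$ induces an equivalence $\mathcal{C}_2(\Lambda)\cong\mathcal{C}_2(\Lambda')$ sending $\pi\Lambda'$ to a cluster-tilting object $T\in\mathcal{C}_2(\Lambda)$. By the mutation connectedness in Proposition \ref{prop::cutAlgebra}, $T$ is reached from $\pi\Lambda$ by a sequence of cluster-tilting mutations, which via the dictionary of \S\ref{sect::derived} together with Theorem \ref{thm:flip=>graded-right-equivalence} corresponds to a sequence of graded flips carrying $(\tau,d)$ to some graded valency $\geq 2$-triangulation $(\tau'',d'')$ with $\mathcal{P}(Q(\tau''),S(\tau''),d'')\underset{\mathbb{Z}}{\cong}\mathcal{P}(Q(\tau'),S(\tau'),d')$. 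Proposition \ref{fundamental} then yields an orientation-preserving homeomorphism $\Phi:\Sigma\to\Sigma$ with $\Phi(\mathbb{M})=\mathbb{M}$, $\Phi(\tau'')=\tau'$, and $d''(\overline{\gamma}^{\tau''})=d'(\overline{\Phi(\gamma)}^{\tau'})$ for every simple closed curve $\gamma$. Since graded flips preserve curve evaluation (Lemma \ref{lemma:graded-flip-preserves-evaluation}), $d''(\overline{\gamma}^{\tau''})=d(\overline{\gamma}^{\tau})$, and the extension from simple to arbitrary closed curves follows from Lemma \ref{lemma:equivalence-of-gradings-characterization}.

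\smallskip

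\noindent\textbf{Main obstacle.} The delicate step I expect to be hardest is the upgrade, in direction $(2)\Rightarrow(1)$, from a graded equivalence of graded Jacobian algebras to a genuine triangle equivalence of their degree-$0$ cut subalgebras; this requires the full strength of the $2$-Calabi-Yau cluster-tilting machinery of \cite{Amiot-gldim2,AO-cl-equiv-der-equiv}, in particular the description of the graded endomorphism algebra of a cluster-tilting object via $\mathbb{S}_2$-twisted $\Hom$-spaces. A secondary source of technical care, peculiar to the punctured case, is the bookkeeping between tagged and ideal triangulations and the need to keep every intermediate flip and mutation inside the valency $\geq 2$ world so that Propositions \ref{fundamental} and \ref{prop::cutAlgebra} remain applicable throughout; this is handled respectively by Lemma \ref{lemma:tau-and-tau-circ-equivalent-grading} and by the results of \cite{LF2,CKLP} already invoked in Proposition \ref{prop::cutAlgebra}.
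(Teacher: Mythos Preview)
Your proposal is essentially correct and follows the same architecture as the paper's proof: pass between derived and cluster categories via Proposition~\ref{prop::cutAlgebra}, translate cluster-tilting mutations into graded flips via Theorem~\ref{thm:flip=>graded-right-equivalence}, and close the loop using Proposition~\ref{fundamental} and Lemma~\ref{lemma:graded-flip-preserves-evaluation}. Two points where the paper is more explicit than your sketch: first, it disposes of the small surfaces (once-punctured disc with $\leq 4$ boundary points, twice-punctured disc with $2$ boundary points) separately at the outset, since Proposition~\ref{fundamental} relies on Proposition~\ref{prop::BS} which excludes them; second, in $(1)\Rightarrow(2)$ the tagged triangulation corresponding to the image of $\pi\Lambda$ is not a priori valency $\geq 2$, and the paper argues this via the quiver isomorphism $Q((\tau^{\bowtie})^\circ)\cong Q(\tau)$ together with Proposition~\ref{prop::BS} before re-choosing a flip sequence through valency $\geq 2$ triangulations --- a step you allude to in your obstacle paragraph but do not quite pin down. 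Finally, for $(2)\Rightarrow(1)$ the paper shortcuts your hand-built tilting complex by directly invoking \cite[Corollary~6.14]{AO-cl-equiv-der-equiv} once the gradings are shown equivalent; your construction is effectively what that corollary encapsulates.
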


\begin{remark}
Suppose that $\tau'$ is a valency $\geq 2$-triangulation and $\Phi$ is an orientation-preserving homeomorphism such that $\Phi(\mathbb{M})=\mathbb{M}$. Then $\Phi$ induces a quiver isomorphism $Q(\Phi^{-1}(\tau'))\rightarrow Q(\tau')$ that in turn induces an isomorphism of chain complexes $C_\bullet(\Phi^{-1}(\tau'))\rightarrow C_\bullet(\tau')$. We abuse notation and write $\Phi$ to denote the latter chain complex isomorphism. Then we have a group homomorphism $C^1(\tau')\rightarrow C^1(\Phi^{-1}(\tau'))$ given by $d'\mapsto d'\circ\Phi$. Suppose $d'$ is a degree-1 map on $Q(\tau')$; then its image $d'\circ\Phi$ is a degree-1 map on $Q(\Phi^{-1}(\tau'))$. Hence, if we are given another valency $\geq 2$-triangulation $\tau$, then by connecting $\Phi^{-1}(\tau')$ with $\tau$ by a sequence of flips involving only nice valency $\geq 2$-triangulations, then $d'\circ\Phi$ induces a degree-1 map $\{d'\circ\Phi\}_\tau$ on $Q(\tau)$ (again, this degree-1 map depends on the sequence of flips, but any two such sequences induce equivalent gradings on $Q(\tau)$ --provided that only valency $\geq 2$-triangulations are involved in the sequences).

Then combining Lemma \ref{lemma:equivalence-of-gradings-characterization} with Corollary \ref{corollary3.13} we easily deduce that item (2) of Theorem \ref{theorem cluster cat} is equivalent to:
\begin{itemize}
\item[(2')] there exists an orientation preserving homeomorphism $\Phi:\Sigma\to \Sigma$ with $\Phi(\mathbb M)=\mathbb M$ such that $[d-\{d'\circ\Phi\}_\tau]_\tau=0$ as an element in $H^1(C^\bullet(\tau))$.

\item[(2'')] there exists an orientation preserving homeomorphism $\Phi:\Sigma\to \Sigma$ with $\Phi(\mathbb M)=\mathbb M$ such that $[d-\{d'\circ\Phi\}_\tau]_{H^1(\Sigma)}=0$ as an element in $H^1(\Sigma)$.
\end{itemize}

\end{remark}

\begin{proof}[Proof of Theorem \ref{theorem cluster cat}]
The proof here is  similar to the proof of \cite[Thm 3.13]{Amiot-Grimeland}, but slightly more complicated given the presence of punctures, which makes it necessary to keep self-folded triangles and tagged triangulations in mind.

Let us immediately treat the small cases.  If $\surf$ is an unpunctured disc with at most $3$ marked points, then the result is trivial since there are no internal arcs.  If $\surf$ is a once-punctured disc with at most $4$ marked points or a twice-punctured disc with $2$ marked points, then the result is true by \cite[Corollary 3.16]{AO-acyc-cl-type}.

Therefore, for the rest of the proof, we will assume that $\surf$ is not one of the following:
\begin{itemize}
 \item An unpunctured disc with at most $3$ marked points;
 \item a once-punctured disc with at most $4$ marked points on the boundary;
 \item a twice-punctured disc with $2$ marked points on the boundary.
\end{itemize}

$(1)\Rightarrow (2)$ Assume that we have a derived equivalence $F:\mathcal{D}^b(\Lambda) \to \mathcal{D}^b(\Lambda')$.   This equivalence extends to cluster categories, and we get a commutative diagram of functors
$$
\xymatrix{\mathcal{D}^b(\Lambda)\ar[r]^{F}_{\sim}\ar[d]_{\pi} & \mathcal{D}^b(\Lambda')\ar[d]^{\pi'}\\ \mathcal{C}_2(\Lambda)\ar[r]^{f}_{\sim} & \mathcal{C}_2(\Lambda')}.
$$

By a combination of \cite[Theorem 7.11]{FST} and \cite[Corollary 3.5]{CKLP}, there is a bijection $$\xymatrix{ \{ \textrm{cluster-tilting objects in }\mathcal{C}_2(\Lambda')\} \ar@{<->}[rr]^{1-1} &&\{\textrm{tagged triangulations of } \surf\}},$$
compatible with flips and mutations and in which $\pi' \Lambda'$ corresponds to $\tau'$.  Since $f(\pi\Lambda)$ is a cluster-tilting object of $\mathcal{C}_2(\Lambda')$, it corresponds to a tagged triangulation $\tau^{\bowtie}$.  

The two triangulations $\tau'$ and $\tau^{\bowtie}$ are related by a sequence of flips, which we can split into two subsequences $s'$ and $s$ such that
$$
 \flip_{s'}(\tau') = (\tau^{\bowtie})^\circ \quad \textrm{ and } \quad \flip_{s}((\tau^{\bowtie})^\circ) = \tau^{\bowtie}.
$$

We can assume that all intermediate triangulations obtained while applying $s'$ are valency $\geq 2$, and thus ideal. To see this, we first show that $(\tau^{\bowtie})^\circ$ is a valency $\geq 2$-triangulation.  Indeed, the quivers $Q((\tau^{\bowtie})^\circ)$ and 
$Q(\tau^{\bowtie})$ are isomorphic (cf. \cite{FST}), $Q(\tau^{\bowtie})$ is (isomorphic to) the Gabriel 
quiver of $\End_{\mathcal{C}_2(\Lambda')}(\tau^{\bowtie})$, $Q(\tau)$ is (isomorphic to) the Gabriel 
quiver of $\End_{\mathcal{C}_2(\Lambda)}(\tau)$, and
$\End_{\mathcal{C}_2(\Lambda')}(\tau^{\bowtie})\cong\End_{\mathcal{C}_2(\Lambda)}(\tau)$ since 
$f$ is an equivalence of categories. Hence $Q((\tau^{\bowtie})^\circ)$ and $Q(\tau)$ are isomorphic 
quivers, and this implies that $(\tau^{\bowtie})^\circ$ is a valency $\geq 2$-triangulation by Proposition 
\ref{prop::BS}. 

Since both $\tau'$ and $(\tau^{\bowtie})^\circ$ are valency $\geq 2$-triangulations, \cite[Equation 6.4]{LF2} implies the existence of a sequence $s'$ of flips such that $\flip_{s'}(\tau')=(\tau^{\bowtie})^\circ$ and with the property that all the intermediate triangulations of the sequence are valency $\geq 2$-triangulations. 

This also implies that we can take the sequence $s$ to be as in Lemma \ref{lemma:tau-and-tau-circ-equivalent-grading}.

Consider the object $\mu^L_s\mu^L_{s'}\Lambda'$ in $\mathcal{D}^b(\Lambda')$.  Its image by $\pi'$ is $f(\pi\Lambda)$, since $\pi'$ commutes with mutation.  Moreover, since $f(\pi\Lambda)$ is isomorphic to $\pi'F\Lambda$, then by Section \ref{sect::derived}, the algebra $\End_{\mathcal{C}_2(\Lambda')}(\pi'F\Lambda) = \bigoplus_{p\in\mathbb{Z}} \Hom_{\mathcal{D}^b(\Lambda')}(F\Lambda, \mathbb{S}_2^{-p}F\Lambda)$ is graded equivalent to $\End_{\mathcal{C}_2(\Lambda')}(\pi'\mu^L_s\mu^L_{s'}\Lambda') = \bigoplus_{p\in\mathbb{Z}} \Hom_{\mathcal{D}^b(\Lambda')}(\mu^L_s\mu^L_{s'}\Lambda', \mathbb{S}_2^{-p}\mu^L_s\mu^L_{s'}\Lambda')$.  Moreover, by Section \ref{sect::derived},
$$
\End_{\mathcal{C}_2(\Lambda')}(\pi'\mu^L_s\mu^L_{s'}\Lambda') \underset{\mathbb Z}{\cong} \mathcal{P}\big(\mu^L_s\mu^L_{s'}(Q(\tau'), S(\tau'), d')\big).
$$

Now, by Theorem \ref{thm:flip=>graded-right-equivalence}, 
$$
\mathcal{P}\big(\mu^L_s\mu^L_{s'}(Q(\tau'), S(\tau'), d')\big) \underset{\mathbb Z}{\cong} \mathcal{P}\big( Q(\flip_s\flip_{s'}\tau'), S(\flip_s\flip_{s'}\tau'), \mu^L_s\mu^L_{s'}d' \big).
$$
The right hand side of the last equation is equal to $\mathcal{P}\big( Q(\flip_s(\tau^{\bowtie})^\circ), S(\flip_s(\tau^{\bowtie})^\circ), \mu^L_s\mu^L_{s'}d' \big)$, which is in turn graded equivalent to $\mathcal{P}\big( Q((\tau^{\bowtie})^\circ), S((\tau^{\bowtie})^\circ), \mu^L_{s'}d' \big)$ by Lemma \ref{lemma:tau-and-tau-circ-equivalent-grading}.

On the other hand, since $F$ is an equivalence, it commutes with $\mathbb{S}_2$, so
$$
\End_{\mathcal{C}_2(\Lambda')}(\pi' F\Lambda) \underset{\mathbb Z}{\cong} \bigoplus_{p\in \mathbb{Z}} \Hom_{\mathcal{D}^b\Lambda}(\Lambda, \mathbb{S}_2^{-p}\Lambda) = \End_{\mathcal{C}_2(\Lambda)}(\pi\Lambda),
$$
which is in turn isomorphic to $\mathcal{P}(Q(\tau), S(\tau), d)$ by Proposition \ref{prop::cutAlgebra}.

Combining this, we get that $\mathcal{P}(Q(\tau), S(\tau), d)$ is graded equivalent to $\mathcal{P}(Q((\tau^{\bowtie})^\circ), S((\tau^{\bowtie})^\circ), \mu^L_{s'}(d))$.

Thus, by Proposition \ref{fundamental} there exists an orientation preserving homeomorphism $\Phi:\Sigma\to\Sigma$ with $\Phi(\mathbb M)=\mathbb M$ and such that for every $\tau$-admissible closed curve $\gamma$ we have
$$
d(\overline{\gamma}^\tau)=\mu_{s'}^L(d')(\overline{\Phi(\gamma)}^{\flip_{s'}(\tau')}).
$$
Since in the sequence of flips $s'$ all intermediate triangulations are valency $\geq 2$-triangulations, Lemma \ref{lemma:graded-flip-preserves-evaluation} implies that

$$
\mu_{s'}^L(d')(\overline{\Phi(\gamma)}^{\flip_{s'}(\tau')})=
\overline{\Phi(\gamma)}^{\tau'}.
$$

$(2)\Rightarrow(1)$ For this direction, the proof is exactly similar to the one of \cite{Amiot-Grimeland}, we sketch it for the convenience of the reader. Let $\tau''=\Phi^{-1}(\tau')$ and $d''$ be the grading induced by $d'$ on $Q(\tau'')$ via the isomorphism $Q(\tau')\cong Q(\tau'')$. Then we clearly get an isomorphism of graded algebras
$$
\mathcal{P}(Q(\tau'), S(\tau'),d')\underset{\mathbb Z}{\cong}\mathcal{P}(Q(\tau''), S(\tau''),d'').
$$ 
Hence we have an isomorphism $\Lambda'\simeq\Lambda''$ where $\Lambda''$ is the surface cut algebra associated with $(\tau'',d'')$.
Moreover we have for any simple closed curve $\gamma$
$$d''(\overline{\gamma}^{\tau''})=d''(\overline{\gamma}^{\Phi^{-1}\tau'})=d'(\overline{\Phi (\gamma)}^{\tau'}).$$

Now let $s$ be a sequence of flips such that $\mu_s(\tau'')=\tau$ and such that all intermediate triangulations are nice and plain. Then $\mu_s^L(d'')$ and $d$ are both degree-1 maps on $\Qtau$ satisfying
\begin{align*} \mu_s^L(d'')(\overline{\gamma}^\tau) & = d'' (\overline{\gamma}^{\tau''}) & \textrm{by Lemma \ref{lemma:graded-flip-preserves-evaluation}}\\
 & = d'(\overline{\Phi (\gamma))}^{\tau'}) & \textrm{by the above equality}\\
 & = d(\overline{\gamma}^\tau) & \textrm{by hypothesis}
\end{align*}
for any simple closed curve $\gamma$ on $\Sigma$.

Thus $d$ and $\mu_s^L(d'')$ are equivalent gradings, which implies that $\Lambda$ and $\Lambda''$ (and hence $\Lambda'$) are derived equivalent by \cite[Corollary 6.14]{AO-cl-equiv-der-equiv}.

\end{proof}


Theorem \ref{theorem cluster cat} has the following immediate consequence:

\begin{corollary}
All surface cut algebras coming from an arbitrarily punctured polygon are derived equivalent.
\end{corollary}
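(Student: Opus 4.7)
My plan is to invoke Theorem~\ref{theorem cluster cat} directly, using the cohomological reformulation labeled (2'') in the Remark that follows it. That reformulation asserts that two surface cut algebras $\Lambda(\tau,d)$ and $\Lambda(\tau',d')$ on $\surf$ are derived equivalent if and only if there exists an orientation-preserving homeomorphism $\Phi:\Sigma\to\Sigma$ with $\Phi(\mathbb{M})=\mathbb{M}$ such that the cohomology class $[d-\{d'\circ\Phi\}_\tau]_{H^1(\Sigma)}$ vanishes in $H^1(\Sigma;\mathbb{Z})$.

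The key observation is purely topological: an arbitrarily punctured polygon is, as a compact oriented surface (forgetting the marked points), homeomorphic to a closed disc. Since a disc is contractible, we have $H^1(\Sigma;\mathbb{Z})=0$, so the cohomological obstruction vanishes automatically for any choice of $\Phi$. I would therefore take $\Phi=\mathrm{id}_\Sigma$, which trivially preserves orientation and fixes $\mathbb{M}$. The remaining task is to ensure that the induced degree-$1$ map $\{d'\}_\tau$ on $Q(\tau)$ is well-defined; by the discussion preceding Corollary~\ref{corollary3.13}, this requires connecting $\tau'$ to $\tau$ by a sequence of flips staying within valency $\geq 2$-triangulations, which is guaranteed by \cite[Equation (6.4)]{LF2}. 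Once this is in place, $d-\{d'\}_\tau$ is a $1$-cocycle whose image in $H^1(\Sigma;\mathbb{Z})=0$ is automatically zero, and Theorem~\ref{theorem cluster cat} yields the derived equivalence $\mathcal{D}^b(\Lambda)\cong\mathcal{D}^b(\Lambda')$.

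I do not foresee any real obstacle: the corollary reduces immediately to the vanishing of $H^1$ on a disc, once Theorem~\ref{theorem cluster cat} is in hand. The only minor bookkeeping point is that the small exceptional cases flagged in Proposition~\ref{prop::BS} (once- and twice-punctured discs with few boundary marked points) are not an issue here; they are already dispatched at the beginning of the proof of Theorem~\ref{theorem cluster cat}, so the argument applies uniformly to every arbitrarily punctured polygon.
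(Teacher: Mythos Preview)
Your proof is correct and takes essentially the same approach as the paper, which simply records the corollary as an ``immediate consequence'' of Theorem~\ref{theorem cluster cat} without further argument. You have spelled out precisely the implicit reasoning: a punctured polygon has $H^1(\Sigma;\mathbb{Z})=0$, so the cohomological obstruction in condition~(2'') vanishes for $\Phi=\mathrm{id}_\Sigma$, and the well-definedness of $\{d'\}_\tau$ via \cite[Equation~(6.4)]{LF2} together with the handling of small cases inside the proof of Theorem~\ref{theorem cluster cat} disposes of all remaining bookkeeping.
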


This result was already known in the case of the disk with one (resp. two) puncture. Indeed, in this case, the surface cut algebra is cluster equivalent to the cluster category of type $D$ (resp. $\tilde{D}$). And since $D$ and $\tilde{D}$ are trees, one can conclude using \cite[Cor. 3.16]{AO-acyc-cl-type}.


\section{Cuts and perfect matchings}\label{section6}


Let $\surf$ be a surface with marked points and non-empty boundary, and suppose that $\triangtau$ is a valency $\geq 3$-triangulation. Then every triangle of $\triangtau$ contains either 0, 1 or 3 arrows of the quiver $\Qtau$. Define a graph $\bipgraph{\triangtau}$ as follows:

\begin{enumerate}
\item The vertex set of $\bipgraph{\triangtau}$ is divided into three types of vertices, white, black and grey:
\begin{itemize}
\item the set $\whiteset$ of white vertices is in bijection with the union of the set of internal triangles of $\triangtau$ with the set of triangles with one side being a boundary segment and one vertex being a puncture;
\item the set $\blackset$ of black vertices is in bijection with the set of punctures of the surface;
\item the set $\greyset$ of grey vertices is in bijection with the union of the set of internal triangles that share at least one point with the boundary with the set of triangles with one side being a boundary segment and one vertex being a puncture.
\end{itemize}
\item the edges of $\bipgraph{\triangtau}$ are only allowed to connect white vertices to black or grey vertices, and they satisfy the following further constraints:
\begin{itemize}
\item for every arrow $\alpha$ shared by a puncture and a triangle, $\bipgraph{\triangtau}$ has an edge $E_\alpha$ connecting the corresponding black and white vertices;
\item for every arrow $\alpha$ in an internal triangle and not shared by a puncture, $\bipgraph{\triangtau}$ has an edge $E_\alpha$ connecting the corresponding grey and white vertices;
\item for every triangle containing exactly one arrow $\alpha$, if this arrow is shared by such triangle and a puncture, then $\bipgraph{\triangtau}$ has an edge $F_\alpha$ connecting the corresponding white and grey vertices;
\item $\bipgraph{\triangtau}$ does not have any other edges besides the ones we have just introduced.
\end{itemize}
\end{enumerate}

\begin{remark}
\begin{itemize}
\item The graph $\bipgraph{\triangtau}$ may be disconnected (see Figure \ref{Fig:Gtau_example1} );
\item for every internal triangle, the corresponding white vertex has valency 3, although the edges emanating from it do not necessarily always go to black vertices;
\item for every triangle with one side being a boundary segment and one vertex being a puncture, the corresponding white vertex has valency 2;
\item it is possible for two vertices of $\bipgraph{\triangtau}$ to be connected by more than one edge;
\item the graph $\bipgraph{\triangtau}$ is bipartite: a bipartition is given by the white vertex set and the union of the black vertex set and the grey vertex set.
\end{itemize}
\end{remark}

\begin{example} In Figure \ref{Fig:Gtau_example1} we can see a triangulation $\triangtau$ of a twice-punctured octogon as well as the associated graph $\bipgraph{\triangtau}$.
\begin{figure}[!h]
                \caption{Triangulation $\triangtau$ and associated graph $\bipgraph{\triangtau}$}
  \label{Fig:Gtau_example1}
 \[\scalebox{0.8}{
\begin{tikzpicture}[>=stealth,scale=0.7]

\draw (0,0)--(1,3)--(4,4)--(7,3)--(8,0)--(7,-3)--(4,-4)--(1,-3)--(0,0);
\node at (5,0) {$\bullet$};
\node at (3,2) {$\bullet$};
\draw (0,0)--(4,-4)--(1,3)..controls (2,1) and (3,0)..(5,0)--(4,-4)..controls (7,-2) and (7,2)..(4,4)--(5,0)--(3,2)--(4,4).. controls (5,3.8) and (8,0)..(8,0)..controls (8,0) and (5,-3.8)..(4,-4);
\draw (1,3)--(3,2);

\draw[dotted,gray] (10,0)--(11,3)--(14,4)--(17,3)--(18,0)--(17,-3)--(14,-4)--(11,-3)--(10,0);
\node at (15,0) {$\bullet$};
\node at (13,2) {$\bullet$};
\draw[dotted,gray] (10,0)--(14,-4)--(11,3)..controls (12,1) and (13,0)..(15,0)--(14,-4)..controls (17,-2) and (17,2)..(14,4)--(15,0)--(13,2)--(14,4).. controls (15,3.8) and (18,0)..(18,0)..controls (18,0) and (15,-3.8)..(14,-4);
\draw[dotted,gray] (11,3)--(13,2);

\draw[thick] (12.5,3)--(13,2)--(14,2)--(15,0)--(13,1)--(13,2);
\draw[thick] (14,-1)--(15,0)--(15.5,-1);
\draw[thick,double] (13,0)--(14,-1);
\draw[thick] (16.4,-0.9)--(16.4,0.9);
\draw[thick] (16.5,-0.9)--(16.5,0.9);
\draw[thick] (16.6,-0.9)--(16.6,0.9);
\draw[thick,double] (15.5,-1)--(15.5,1);
\draw[very thick, dotted] (12,4)--(12.5,3);
\draw[thick] (14,3)--(14,2);
\draw[thick] (12,2)--(13,1);

\node[fill=white, inner sep=0pt] at (14,-1) {$\circ$};
\node[fill=white, inner sep=0pt] at (13,1) {$\circ$};
\node[fill=white, inner sep=0pt]  at (12.5,3) {$\circ$};
\node[fill=white, inner sep=0pt]  at (14,2) {$\circ$};
\node[fill=white, inner sep=0pt]  at (15.5, -1) {$\circ$};
\node[fill=white, inner sep=0pt]  at (16.5,-1) {$\circ$};

\node[gray] at (15.5,1) {$\bullet$};
\node[gray] at (16.5,1) {$\bullet$};
\node[gray] at (13,0) {$\bullet$};
\node[gray] at (12,2) {$\bullet$};
\node[gray] at (12,4) {$\bullet$};
\node[gray] at (14,3) {$\bullet$};

\end{tikzpicture}}\]  
        \end{figure}
Edges of type $E_\alpha$ have been drawn undotted, while the edge of type $F_\alpha$ has been drawn dotted.
\end{example}

\begin{lemma} The set of cuts of $\Qtau$ is in bijection with the set $\{M\suchthat M$ is a partial matching on $\bipgraph{\triangtau}$ and there exists a subset $V_M\subseteq\greyset$ such that $M$ is a perfect matching on $\bipgraph{\triangtau}\setminus V_M\}$.
\end{lemma}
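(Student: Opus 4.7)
The strategy is to exhibit explicit mutually inverse maps between cuts of $\Qtau$ and the partial matchings described in the statement, and then to verify the required properties by a vertex-by-vertex analysis of $\bipgraph\triangtau$.

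Given a cut $d\colon \Qtau_1\to\{0,1\}$, I would define
\[
M_d = \{\,E_\alpha \mid d(\alpha)=1\,\}\;\cup\;\{\,F_\alpha \mid \alpha \text{ is the arrow of a triangle with one boundary side and one puncture vertex, and } d(\alpha)=0\,\};
\]
in the reverse direction, given a partial matching $M$ that becomes perfect after removing a subset $V_M\subseteq\greyset$, I would set $d_M(\alpha)=1$ if $E_\alpha\in M$ and $d_M(\alpha)=0$ otherwise. The verification then proceeds locally at each vertex of $\bipgraph\triangtau$. At a white vertex $w_T$ coming from an internal triangle $T$, the incident edges are precisely the three $E_{\alpha_i}$ associated to the arrows of the $3$-cycle $\xi_T\in\Qtau_2^+$, so the homogeneity $d(\xi_T)=1$ is equivalent to exactly one of them lying in $M_d$. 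At a black vertex $b_p$, the incident edges $E_\alpha$ index the arrows of the puncture cycle $\xi_p$, and $d(\xi_p)=1$ again selects exactly one. At a white vertex $w_T$ of a triangle with one boundary side and one puncture vertex, whose unique arrow is some $\alpha$, the incident edges are exactly $E_\alpha$ and $F_\alpha$, and the construction $M_d$ picks precisely one of the two according to the value of $d(\alpha)$. Every grey vertex $g_T$ is adjacent only to the single white vertex $w_T$ of the corresponding triangle (its incident edges, whether of type $E$ or type $F$, all emanate from the same $w_T$), so the matching condition at $g_T$ follows automatically from the one at $w_T$. Declaring $V_M$ to consist of the grey vertices uncovered by $M_d$ then yields a perfect matching on $\bipgraph\triangtau\setminus V_M$.

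Conversely, $d_M$ is a cut because arrows not appearing in any cycle of $\Qtau_2$ give rise to no edge $E_\alpha$ in $\bipgraph\triangtau$ (so $d_M$ vanishes on them automatically), while the perfect-matching condition at the internal white vertices and at the black vertices forces exactly one degree-$1$ arrow in each $3$-cycle and each puncture cycle, i.e.\ in each term of $\Stau$. The two assignments $d\mapsto M_d$ and $M\mapsto d_M$ are then manifestly mutually inverse by construction.

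The most delicate point is the treatment of triangles with one boundary side and one puncture vertex: these contribute a white vertex $w_T$ which imposes a matching constraint but corresponds to \emph{no} cycle in $\Qtau_2$. The $F$-edges are precisely what absorbs this constraint when $d(\alpha)=0$ (so that $E_\alpha\notin M_d$); the puncture cycle at the adjacent puncture is then satisfied by some other arrow $\alpha'\neq\alpha$ with $E_{\alpha'}\in M_d$ at $b_p$. Symmetrically, when $d(\alpha)=1$ the grey vertex $g_T$ of such a triangle is left uncovered by $M_d$, which is exactly why the statement must allow a subset $V_M\subseteq\greyset$ of grey vertices to be removed before $M_d$ becomes a perfect matching.
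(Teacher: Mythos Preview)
Your proposal is correct and follows essentially the same approach as the paper: you define the same pair of maps $d\mapsto M_d$ and $M\mapsto d_M$ (the paper writes these as $C\mapsto M(C)$ and $M\mapsto C(M)$ via the set $C=\{\alpha\mid d(\alpha)=1\}$) and carry out the same vertex-by-vertex verification, including the key observation that each grey vertex is adjacent only to the white vertex of its own triangle. Your final paragraph on the role of the $F$-edges is a bit more explicit than the paper's treatment, but the argument is the same.
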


\begin{proof}
Let $d$ be an admissible cut of $\Qtau$, then $d$ is entirely defined by the set $C:=\{\alpha\in Q(\tau)_1 | d(\alpha)=1\}$. Define the subset $M=M(C)$ of the edge set of $\bipgraph{\triangtau}$ as follows:
\[M(C):=\{E_\alpha\suchthat \alpha\in C\}\cup\{F_\alpha\suchthat \alpha\notin C\textrm{ and }F_\alpha\textrm{ is defined}\}.\]

Note that is $\alpha$ is an arrow of $Q(\tau)$ such that $E_\alpha$ is not defined, then $\alpha$ belongs to a triangle with one side being a boundary segment and the three vertices on the boundary. Then $\alpha$ does not appear in $Q(\tau)_2$, and thus cannot be in $C$.
 
We claim that $M$ is a  partial matching on $\bipgraph{\triangtau}$. To prove this claim, we first show that every vertex which is white or black belongs exactly to one edge in $M$. It is obvious that every black vertex belongs to exactly one edge in $M$. Let $w\in \bipgraph{\triangtau}$ be a white vertex. If it has valency three then  the corresponding triangle $\triang$ of $\triangtau$ is internal, and so contains exactly three arrows $\alpha$, $\beta$ and $\gamma$.  Exactly one of these arrows, say $\alpha$, belongs to $C$, and hence $E_\alpha\in M$ and $E_\beta,E_\gamma\notin M$. If $w$ has valency two, then the corresponding triangle contains one boundary segment and one puncture, hence it contains exaclty one arrow $\alpha$. Then either $\alpha\in C$, in which case $E_\alpha\in M$ and $F_\alpha\notin M$ or  $\alpha\notin C$, in which case $E_\alpha\notin M$ and $F_\alpha\in M$.

Now we show that every grey vertex does not belong to more than one edge in $M$.
Let $g\in \bipgraph{\triangtau}$ be a grey vertex, and let $\triang$ be the corresponding triangle of $\triangtau$. Let $w$ be the white vertex of $\bipgraph{\triangtau}$ corresponding to $\triang$. It is clear that no edge of $\bipgraph{\triangtau}$ connects $g$ with a vertex different from $w$, and that $w$ is connected to $g$ by at least one edge of $\bipgraph{\triangtau}$. If $\triang$ contains exactly one arrow of $\Qtau$, then $w$ and $g$ are connected by exactly one edge of $\bipgraph{\triangtau}$ and it is hence obvious that $g$ belongs to at most one edge in $M$. If $\triang$ contains exactly three arrows, say $\alpha$, $\beta$ and $\gamma$, then exactly one of these three arrows, say $\alpha$, belongs to $C$, and then it is clear that both of the following containments are impossible:
$
E_\beta\in M, \ E_{\gamma}\in M,
$
which shows that $g$ belongs to at most one edge in $M$.

We conclude that $M$ is a partial matching on $\bipgraph{\triangtau}$. Since we have shown that every white vertex and every black vertex belong to at least one edge in $M$, the existence of a subset $V_M$ of the grey vertex set such that $M$ is a perfect matching on $\bipgraph{\triangtau}\setminus V_M$ is obvious.

\medskip
Conversely, suppose that $M$ is a partial matching on $\bipgraph{\triangtau}$ and that there exists a subset $V_M$ of the grey vertex set such that $M$ is a perfect matching on $\bipgraph{\triangtau}\setminus V_M$. Define $C=C(M)$ defined as follows.
\[C(M)=\{\alpha\suchthat E_\alpha\in M\}.\]
It is obvious that $C$ is a cut of $\QStau$.

We leave in the hands of the reader the verification that the assignments $M\mapsto C(M)$ and $C\mapsto M(C)$ are inverse to each other.
\end{proof}

\begin{remark} Let $M$ be a partial matching on $\bipgraph{\triangtau}$ for which there exists a subset $V_M$ of the grey vertex set with the property that $M$ is a perfect matching on $\bipgraph{\triangtau}\setminus V_M$. Note that the subset $\{E\in M \ | E=E_\alpha\}$ of $M$ determines $M$ uniquely.

\end{remark}

\begin{example} In Figure \ref{Fig:Gtau_cut_example1} we can see a triangulation $\tau$ of a twice-punctured octogon, a cut of the QP $\QStau$ (arrows of degree 1 are dotted), and the partial matching corresponding to the cut.

\begin{figure}[!h]
                \caption{Admissible cut of $\triangtau$ and associated perfect matching of  $\bipgraph{\triangtau}$}
  \label{Fig:Gtau_cut_example1}
 \[\scalebox{0.8}{
\begin{tikzpicture}[>=stealth,scale=0.7]

\draw (0,0)--(1,3)--(4,4)--(7,3)--(8,0)--(7,-3)--(4,-4)--(1,-3)--(0,0);
\node at (5,0) {$\bullet$};
\node at (3,2) {$\bullet$};
\draw (0,0)--(4,-4)--(1,3)..controls (2,1) and (3,0)..(5,0)--(4,-4)..controls (7,-2) and (7,2)..(4,4)--(5,0)--(3,2)--(4,4).. controls (5,3.8) and (8,0)..(8,0)..controls (8,0) and (5,-3.8)..(4,-4);
\draw (1,3)--(3,2);

\node (A) at (2,-2) {};
\node (B) at (2.7,-1) {};
\node (C) at (2,2.5) {};
\node (D) at (3,0.5) {};
\node (E) at (3.5,3) {};
\node (F) at (4,1) {};
\node (G) at (4.5,-2) {};
\node (H) at (4.5,2) {};
\node (I) at (6.3,0) {};
\node (J) at (7,1.3) {};
\node (K) at (7,-1.3) {};

\draw[thick,->] (B)--(A);\draw[thick,->] (B)--(D);\draw[thick,->] (D)--(C);\draw[thick,->] (C)--(F);\draw[thick,->] (E)--(C);\draw[thick,->] (E)--(H);\draw[thick,->] (H)--(F);\draw[thick,->] (H)..controls (5.5,2) and (6,0.5)..(I);\draw[thick,->] (G)..controls (5.5,-1) and (5.5,1)..(H);\draw[thick,->] (I)--(J);
\draw[thick,->] (J)--(K);\draw[thick,->] (D)--(G);

\draw[thick, dotted] (B)--(G); \draw[thick, dotted] (D)--(F); \draw[thick, dotted] (E)--(F); \draw[thick, dotted] (I)--(G); \draw[thick, dotted] (I)--(K);

\draw[dotted,gray] (10,0)--(11,3)--(14,4)--(17,3)--(18,0)--(17,-3)--(14,-4)--(11,-3)--(10,0);
\node at (15,0) {$\bullet$};
\node at (13,2) {$\bullet$};
\draw[dotted,gray] (10,0)--(14,-4)--(11,3)..controls (12,1) and (13,0)..(15,0)--(14,-4)..controls (17,-2) and (17,2)..(14,4)--(15,0)--(13,2)--(14,4).. controls (15,3.8) and (18,0)..(18,0)..controls (18,0) and (15,-3.8)..(14,-4);
\draw[dotted,gray] (11,3)--(13,2);

\draw[very thick] (13,2)--(14,2);
\draw[very thick](15,0)--(13,1);

\draw[very thick] (13,0)--(14,-1);

\draw[very thick] (16.5,-0.9)--(16.5,0.9);

\draw[very thick] (15.5,-1)--(15.5,1);
\draw[very thick] (12,4)--(12.5,3);

\node[fill=white, inner sep=0pt] at (14,-1) {$\circ$};
\node[fill=white, inner sep=0pt] at (13,1) {$\circ$};
\node[fill=white, inner sep=0pt]  at (12.5,3) {$\circ$};
\node[fill=white, inner sep=0pt]  at (14,2) {$\circ$};
\node[fill=white, inner sep=0pt]  at (15.5, -1) {$\circ$};
\node[fill=white, inner sep=0pt]  at (16.5,-1) {$\circ$};

\node[gray] at (15.5,1) {$\bullet$};
\node[gray] at (16.5,1) {$\bullet$};
\node[gray] at (13,0) {$\bullet$};
\node[gray] at (12,2) {$\bullet$};
\node[gray] at (12,4) {$\bullet$};
\node[gray] at (14,3) {$\bullet$};

\end{tikzpicture}}\]  

        \end{figure}

\end{example}

We recall from \cite{Hall} the following criterion for a general bipartite graph to have at least one perfect matching:

\begin{theorem}[Hall's marriage theorem]\label{thm:criterion-for-perfect-matchings} Let $G$ be a bipartite graph. A perfect matching on $G$ exists if and only if every collection of white (resp. black) vertices is edge-connected to at least as many black (resp. white) vertices. In particular, if $G$ admits a perfect matching, then its number of white vertices equals its number of black vertices.
\end{theorem}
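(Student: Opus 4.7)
The plan is to prove both implications, with the forward direction being immediate and the reverse direction proceeding by induction on the size of the smaller side.

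For the necessity direction, suppose $G$ admits a perfect matching $M$. Given any set $S$ of white vertices, the restriction of $M$ to $S$ is an injection from $S$ into $N(S)$, hence $|N(S)| \geq |S|$; the argument for subsets of black vertices is symmetric. Taking $S$ to be the full set of white vertices (respectively black vertices) yields $|\whiteset| = |\blackset|$, which is the ``in particular'' claim. Note also that applying the hypothesis of the theorem to the full set of white vertices and then to the full set of black vertices already forces $|\whiteset| = |\blackset|$, so we may assume this equality in what follows.

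For sufficiency, I would induct on $n = |\whiteset| = |\blackset|$. The base case $n=1$ is immediate, since Hall's condition applied to the unique white vertex gives it at least one black neighbour. For the inductive step, split into two cases. In \emph{Case 1}, suppose the inequality $|N(S)| > |S|$ is strict for every proper non-empty $S \subsetneq \whiteset$. Choose any edge $\{w,b\}$, remove both endpoints, and verify that the remaining bipartite graph still satisfies Hall's condition: for any $S' \subseteq \whiteset \setminus \{w\}$, the new neighbourhood has lost at most one vertex ($b$), so its size is at least $|N(S')|-1 \geq |S'|$. Apply the inductive hypothesis and adjoin the edge $\{w,b\}$. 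In \emph{Case 2}, there is a proper non-empty $S_0 \subsetneq \whiteset$ with $|N(S_0)| = |S_0|$. Consider separately the bipartite subgraph $G_1$ induced on $S_0 \cup N(S_0)$ and the one $G_2$ induced on $(\whiteset \setminus S_0) \cup (\blackset \setminus N(S_0))$. The subgraph $G_1$ trivially inherits Hall's condition (for $S \subseteq S_0$ the neighbourhoods in $G$ and $G_1$ coincide); for $G_2$, any violating set $T \subseteq \whiteset \setminus S_0$ would, when combined with $S_0$, violate Hall's condition in $G$. By induction each piece admits a perfect matching, and their union is a perfect matching of $G$.

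The only delicate point is the verification of Hall's condition for the residual graph $G_2$ in Case 2; this is where the tight set $S_0$ is essential, since the equality $|N(S_0)|=|S_0|$ is precisely what allows the bound to transfer cleanly from $S_0 \cup T$ back to $T$. Once that verification is in place, the rest of the argument is routine bookkeeping.
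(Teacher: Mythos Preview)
Your argument is the standard inductive proof of Hall's theorem and is correct. There is nothing to compare against, however: the paper does not supply a proof of this statement. It is quoted as a classical result with a citation to Hall's original paper \cite{Hall}, and is then used as a black box in the proof of the subsequent Theorem~\ref{thm:criterion-for-good-partial-matchings}. So your proposal is not wrong, but it is proving something the authors simply take for granted.
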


Since grey vertices are somehow disposable when it comes to partial matchings on $\bipgraph{\triangtau}$ that correspond to cuts of $\QStau$, we give the following slight modification of Theorem \ref{thm:criterion-for-perfect-matchings}:

\begin{theorem}\label{thm:criterion-for-good-partial-matchings} The graph $\bipgraph{\triangtau}$ admits a partial matching that saturates all white and black vertices if and only if the following two conditions are simultaneously satisfied:
\begin{enumerate}
\item\label{item:white<black-and-grey} For any subset $S_1$ of $\whiteset$ there are at least $|S_1|$ elements of $\blackset\cup\greyset$ that are edge-connected to elements of $S_1$;
\item\label{item:black<white} for any subset $S_2$ of $\blackset$ there are at least $|S_2|$ elements of $\whiteset$ that are edge-connected to elements of $S_2$.
\end{enumerate}
In particular, if a partial matching exists that saturates all black and white vertices, then we have $|\mathbb{B}|\leq |\mathbb{W}|\leq |\mathbb{B}|+|\mathbb{G}|$.
\end{theorem}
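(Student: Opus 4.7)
The plan is to derive this criterion from the classical Hall marriage theorem (Theorem \ref{thm:criterion-for-perfect-matchings}), combined with a standard augmenting-path argument that carefully preserves the saturation of $\blackset$ established at the first step.

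Necessity of (1) and (2) is immediate: given a partial matching $M$ of $\bipgraph{\triangtau}$ saturating $\whiteset \cup \blackset$, the edges of $M$ incident to a subset $S_1 \subseteq \whiteset$ (resp.\ $S_2 \subseteq \blackset$) exhibit $|S_1|$ distinct vertices of $\blackset \cup \greyset$ edge-connected to $S_1$ (resp.\ $|S_2|$ distinct vertices of $\whiteset$ edge-connected to $S_2$).

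For sufficiency, I would work in two stages. First, I would apply Theorem \ref{thm:criterion-for-perfect-matchings} to the subgraph of $\bipgraph{\triangtau}$ obtained by deleting all vertices in $\greyset$ together with their incident edges: condition (2) is precisely Hall's hypothesis for a matching saturating $\blackset$ in this subgraph, so such a matching $M_0$ exists, and viewed inside $\bipgraph{\triangtau}$ it is a partial matching saturating $\blackset$ but possibly leaving some white vertices uncovered. Second, I would enlarge $M_0$ one step at a time by the usual augmenting-path procedure: picking any unmatched $w_0 \in \whiteset$, let $W_1 \subseteq \whiteset$ (resp.\ $V_1 \subseteq \blackset \cup \greyset$) be the set of white (resp.\ non-white) vertices reachable from $w_0$ by alternating paths with respect to the current matching. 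Since $\blackset$ is already saturated, any unmatched vertex of $V_1$ must lie in $\greyset$; if such a vertex exists, I augment along the corresponding alternating path. Otherwise every element of $V_1$ is matched, the current matching restricts to an injection $V_1 \hookrightarrow W_1 \setminus \{w_0\}$, and by construction of the alternating tree one has $N(W_1) \subseteq V_1$; this yields $|N(W_1)| \leq |V_1| \leq |W_1|-1$, contradicting condition (1). Iterating until $\whiteset$ is fully saturated produces the desired matching, and the concluding inequality $|\blackset| \leq |\whiteset| \leq |\blackset| + |\greyset|$ then follows immediately from the injectivity of the matching restricted to $\blackset$ and to $\whiteset$ respectively.

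The step I expect to require the most care is the verification that the augmenting phase does not destroy the saturation of $\blackset$ achieved in the first stage. The key observation is that an augmenting path runs from the unmatched white vertex $w_0$ to an unmatched grey vertex $g \in \greyset$ (never to a black vertex, since all of $\blackset$ is already matched), and the swap along the path causes every intermediate matched vertex, possibly including black vertices, to merely change partner rather than become unmatched, while $w_0$ and $g$ become newly matched. Consequently no black vertex ever transitions from matched to unmatched during the augmentation, so the saturation of $\blackset$ persists throughout the iteration.
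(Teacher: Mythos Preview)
Your argument is correct. Both the necessity direction and the two-stage sufficiency argument (first saturate $\blackset$ via Hall, then augment toward $\greyset$ to cover the remaining white vertices) are sound, and your verification that augmentation never unsaturates a black vertex is exactly the point that needs checking.

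The paper takes a different route. Rather than starting from a single matching and augmenting, it invokes Hall twice to obtain two separate matchings---one $M_1$ saturating $\whiteset$ (from condition~(1)) and one $M_2$ saturating $\blackset$ (from condition~(2))---and then merges them by analysing the connected components of the symmetric difference $D=(M_1\cup M_2)\setminus(M_1\cap M_2)$, which are alternating paths and cycles. It then iteratively modifies $M_1$ and $M_2$ along these components so as to shrink $D$ while preserving the respective saturation properties, until a single matching saturating $\whiteset\cup\blackset$ emerges. Your approach is more economical: you only invoke Hall once, and the augmenting-path step replaces the somewhat delicate case analysis of the components of $D$. The paper's approach, on the other hand, treats the two hypotheses more symmetrically and makes the role of grey vertices as ``disposable'' endpoints of alternating paths visually explicit in the component picture. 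Both are standard matching-theory manoeuvres; yours is the cleaner of the two here.
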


\begin{proof} Necessity is obvious, we prove sufficiency. From \eqref{item:white<black-and-grey} we deduce that there exists a partial matching $M_1$ on $\bipgraph{\triangtau}$ that saturates $\whiteset$, whereas from \eqref{item:black<white} we deduce that there exists a partial matching $M_2$ on $\bipgraph{\triangtau}$ that saturates $\blackset$. If $M_2\subseteq M_1$, then $M_1$ is a partial matching that saturates all white and black vertices and the theorem follows. So, let us assume that $M_2\setminus M_1\neq \varnothing$ and write $D=(M_1\cup M_2)\setminus(M_1\cap M_2)$. Note that $M_2\setminus M_1\subseteq D$.

As a graph, $D$ is the union of its connected components. Any such component $C$ either contains a grey vertex or it does not. If it does, then it is a line\footnote{that is, isomorphic, as a graph, to a Dynkin diagram of type $A$}, it contains exactly one grey vertex and this grey vertex is a leaf (see Figure \ref{Fig:grey_leaf_in_graph_component}). If $C$ does not contain any grey vertex, then $C$ is a cycle.
\begin{figure}[!h]
                \caption{If $C$ contains a grey vertex, then $C$ is a line and the grey vertex is a leaf of $C$.}\label{Fig:grey_leaf_in_graph_component}
                \centering
                \includegraphics[scale=1]{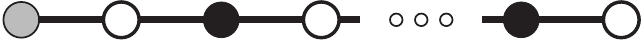}
        \end{figure}

If at least one connected component $C$ of $D$ is either a cycle or a line having a black vertex as a leaf, define $M_1'=(M_1\setminus C)\cup (M_2\cap C)$ and $M_2'=M_2$. Then $M_1'$ is a partial matching on $\bipgraph{\triangtau}$ and it saturates $\whiteset$. Furthermore, the symmetric difference $(M_1'\cup M_2')\setminus (M_1'\cap M_2')$ is properly contained in $D$.

If every connected component of $D$ is a line that has a white vertex as a leaf, then:
\begin{itemize}\item If at least one such component, say $C$, has at least two edges, let $e$ be the unique edge in $C$ that is incident to a grey vertex and define $M_1'=M_1$ and $M_2'=(M_2\setminus C)\cup ((M_1\cap C)\setminus\{e\})$. Then $M_2'$ is a partial matching on $\bipgraph{\triangtau}$ and it saturates $\blackset$. Furthermore, the symmetric difference $(M_1'\cup M_2')\setminus (M_1'\cap M_2')$ is properly contained in $D$.
\item If all components of $D$ are singletons, then at least one of them, say $C$, is contained in a connected component of $\bipgraph{\triangtau}$ that has a black vertex, for otherwise we would have $M_2\subseteq M_1$, against our assumption that $M_2\setminus M_1\neq \varnothing$. Let $e_0$ be the unique element of $C$. Then $e_0$ connects a vertex $g_0\in\greyset$ with a vertex $w_0\in\whiteset$ and there exists an edge $e_1$ that connects $w_0$ with a black vertex $b_0$. Since $e_0\in M_1$, the edge $e_1$ does not belong to $M_1$. It does not belong to $M_2$ either, for otherwise $C=\{e_0\}$ would not be a connected component of $D$. Therefore, $e_1\notin M_1\cup M_2$. Let $e_2$ be the unique element of $M_2$ containing $b_0$. Then $e_2\in M_1$, for otherwise $e_2$ would be contained in a connected component of $D$ that would not be a singleton with a white vertex as a leaf. Set $M_1'=(M_1\setminus\{e_0,e_2\})\cup\{e_1\}$ and $M_2'=(M_2\setminus\{e_2\})\cup\{e_1\}$. Then $M_1'$ (resp. $M_2'$) is a partial matching on $\bipgraph{\triangtau}$ that saturates $\whiteset$ (resp. $\blackset$), and the symmetric difference $(M_1'\cup M_2')\setminus (M_1'\cap M_2')$ has less elements than $D$.
\end{itemize}

We conclude that a partial matching on $\bipgraph{\triangtau}$ indeed exists that simultaneously saturates all white and black vertices.
\end{proof}

%

\begin{corollary}\label{coro::nocut} If $\surf$ is a surface with empty boundary different from a sphere with less than five punctures, and if $\triangtau$ is a valency $\geq 3$-triangulation, then $\QStau$ does not admit any cut whatsoever.
\end{corollary}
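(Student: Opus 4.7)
The plan is to extract a numerical obstruction by double-counting the degrees of arrows in two different ways. First I would observe that when $\Sigma$ has empty boundary and $\triangtau$ is a valency $\geq 3$-triangulation, every triangle of $\tau$ is internal, and $\Stau$ consists exactly of one $3$-cycle $\xi_\triang$ for each triangle $\triang$ of $\tau$ and one cycle $\eta_p$ around each puncture $p$ (with signs, which play no role here); moreover, each arrow $\alpha \in \Qtau_1$ belongs to exactly one such $\xi_\triang$ and exactly one such $\eta_p$. This clean description is what the valency $\geq 3$ hypothesis buys us, since it rules out the exceptional terms that appear in $\Stau$ near punctures of valency $1$ or $2$.

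Next, for any admissible cut $d$, by definition $d(\xi_\triang) = 1$ for every triangle $\triang$ and $d(\eta_p) = 1$ for every puncture $p$. I would then sum $d(\alpha)$ over all arrows by grouping the summands first according to triangles and then according to punctures, which yields
\[
\sum_{\alpha \in \Qtau_1} d(\alpha) \;=\; \#\{\text{triangles of }\tau\} \;=\; \#\{\text{punctures of }\surf\},
\]
so $T = P$, where $T$ denotes the number of triangles of $\tau$ and $P = |\marked|$.

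The last step would be to compare this with Euler's formula. Viewing $\tau$ as a CW decomposition of $\Sigma$ with $V = P$ vertices, $F = T$ faces, and $E = 3T/2$ edges (since every arc bounds exactly two triangles in the absence of boundary), I obtain
\[
P - \tfrac{3T}{2} + T \;=\; 2 - 2g,
\]
which simplifies to $T = 2P + 4g - 4$. Combined with $T = P$, this forces $P = 4 - 4g$, whose only solution with $P \geq 1$ is $(g,P) = (0,4)$. Hence an admissible cut can exist only when $\surf$ is a sphere with at most four punctures, which is exactly the case excluded from the statement. The only step requiring genuine care is the first: confirming that under the valency $\geq 3$ hypothesis the potential really has the clean form described, and that admissibility forces each of its terms to have degree exactly one; everything afterwards is pure bookkeeping, and in particular one does not need to invoke the combinatorics of $\bipgraph{\triangtau}$ developed for the boundary case.
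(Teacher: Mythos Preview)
Your argument is correct and takes a genuinely different route from the paper. The paper proves the corollary via the bipartite graph $\bipgraph{\triangtau}$: since the boundary is empty one has $\greyset=\varnothing$, so Theorem~\ref{thm:criterion-for-good-partial-matchings} forces $|\blackset|=|\whiteset|$ for a cut to exist, whereas an induction on the number of punctures shows $|\blackset|<|\whiteset|$. Your double-counting bypasses the matching machinery entirely: summing $d(\alpha)$ over all arrows and grouping first by triangles, then by punctures, yields $T=P$ directly, and Euler's formula then pins down $(g,P)=(0,4)$ as the only possibility. Both arguments ultimately rest on the same numerical inequality $T\neq P$ (indeed $T=2P+4g-4$), but you reach it by an explicit Euler-characteristic computation rather than by induction, and you extract the obstruction without ever mentioning perfect matchings. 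Your approach is more self-contained for this particular corollary; the paper's route has the advantage that $\bipgraph{\triangtau}$ is already in place for the boundary case, where the grey vertices genuinely matter and a pure counting argument would not suffice. One small bonus: your argument never uses the condition that $d$ take values in $\{0,1\}$, so it in fact shows the stronger statement that no degree-$1$ map whatsoever exists on $\Qtau$ for these surfaces.
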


\begin{proof} Using induction on the number of punctures of $\surf$, it is easy to prove that $|\mathbb{B}|< |\mathbb{W}|$. The result follows then from Theorem \ref{thm:criterion-for-good-partial-matchings} since $\mathbb{G}=\varnothing$.
\end{proof}

\begin{proposition}\label{prop::nocut} Let $\surf$ be a surface with non-empty boundary and at least one puncture. If the genus of $\surface$ is positive, then there exist ideal triangulations of $\surf$ whose associated QPs do not admit any cuts whatsoever.
\end{proposition}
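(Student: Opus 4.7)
My plan is to exhibit an ideal triangulation $\tau$ whose bipartite graph $\bipgraph{\tau}$ fails condition~(\ref{item:white<black-and-grey}) of Theorem~\ref{thm:criterion-for-good-partial-matchings}; by the bijection between cuts of $\Qtau$ and good partial matchings of $\bipgraph{\tau}$ established earlier in the section, this will yield an ideal triangulation whose QP admits no cut whatsoever. The obstruction will come from an abundance of \emph{puncture-triangles}, i.e., internal triangles of $\tau$ whose three vertices are all punctures; the positive-genus hypothesis is exactly what makes such an abundance possible while keeping all vertices in $\marked$.

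The point of puncture-triangles is the following: if $\triang$ is a puncture-triangle, then it produces a white vertex $w_\triang$ in $\bipgraph{\tau}$ but no grey vertex (no vertex of $\triang$ lies on $\partial\surface$), and each of the three arrows of $\Qtau$ associated with $\triang$ is shared between a puncture and $\triang$. Hence every edge of $\bipgraph{\tau}$ incident with $w_\triang$ goes to a black vertex. Letting $S_1\subseteq\whiteset$ be the set of white vertices coming from the puncture-triangles of $\tau$, the subset of $\blackset\cup\greyset$ edge-connected to $S_1$ is therefore contained in $\blackset$, and so has cardinality at most $|\punct|$. As soon as $\tau$ has strictly more than $|\punct|$ puncture-triangles, condition~(\ref{item:white<black-and-grey}) of Theorem~\ref{thm:criterion-for-good-partial-matchings} is violated, so $\bipgraph{\tau}$ admits no partial matching saturating $\whiteset\cup\blackset$, and hence $\Qtau$ admits no cut.

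To construct such a triangulation, I would fix a puncture $q\in\punct$ and, using $g\geq 1$, choose an embedded compact subsurface $\surface'\subset\surface$ of genus $g$ with one boundary curve $\gamma$, containing $q$ and none of the other marked points of $\marked$. On $\surface'$, I pick $2g$ simple loops $\alpha_1,\beta_1,\ldots,\alpha_g,\beta_g$ based at $q$, pairwise disjoint except at $q$, forming a symplectic basis of $H_1(\surface';\ZZ)$; cutting $\surface'$ along these loops yields a single topological $(4g+1)$-gon $P$ whose sides are $\gamma$ together with two copies of each $\alpha_i^{\pm 1},\beta_i^{\pm 1}$. Subdividing $P$ by $4g-2$ further diagonals at $q$ yields $4g-1$ triangles inside $\surface'$: exactly one has $\gamma$ as a side, while the other $4g-2$ have all three vertices at $q$ and are puncture-triangles. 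I then extend to an ideal triangulation $\tau$ of $\surf$ by triangulating the planar complement $\surface\setminus\interior(\surface')$ (which contains $\partial\surface$ and the remaining punctures) in any compatible way. When $|\punct|$ is large I would iterate the subdivision, adding further loops at $q$ to split existing puncture-triangles in $P$ into three new puncture-triangles each, until the total count of puncture-triangles exceeds $|\punct|$; the second paragraph then finishes the argument.

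The hardest part will be the geometric realization: proving that the required disjoint simple loops at $q$ (both the symplectic basis and the extra diagonals) can actually be drawn inside $\surface'$, and that the resulting family glues to an honest ideal triangulation of $\surf$. This amounts to tracking the boundary word of $P$ and verifying that each new diagonal can be realized as a simple loop at $q$ disjoint from the previous ones. The numerical side is easy by Euler characteristic: any ideal triangulation of $\surf$ has $c+2p+4g+2b-4$ triangles in total, and the $O(b+c)$ triangles absorbing $\partial\surface$ still leave abundant room for puncture-triangles once $g\geq 1$, so the required count always fits.
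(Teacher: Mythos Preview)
Your overall strategy---produce internal triangles whose white vertices in $\bipgraph{\tau}$ are connected only to black vertices, and then violate condition~(\ref{item:white<black-and-grey})---is exactly the paper's approach. But you have made the counting much harder than necessary, and this leads you into a step that does not work.

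All of the $4g-2$ ``puncture-triangles'' you construct have their three corners at the \emph{single} puncture $q$. Hence each arrow of such a triangle is shared with the puncture $q$, and every edge of $\bipgraph{\tau}$ leaving the corresponding white vertex ends at the \emph{one} black vertex $q$. Taking $S_1$ to be these white vertices, the set of neighbours in $\blackset\cup\greyset$ has cardinality~$1$, not $|\punct|$. So condition~(\ref{item:white<black-and-grey}) already fails once there are two such triangles, i.e.\ as soon as $g\geq 1$; there is nothing to iterate. (Equivalently and more directly: a cut must pick exactly one degree-$1$ arrow from each of the two triangle $3$-cycles, giving at least two degree-$1$ arrows in the big cycle around $q$, contradicting the requirement that this cycle contain exactly one.) This is precisely the paper's argument: it exhibits, on the once-punctured one-holed torus, a triangulation with two triangles whose sides are all loops at the puncture, and then glues the rest of $\surf$ on.

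Your proposed iteration ``adding further loops at $q$ to split existing puncture-triangles into three'' cannot be carried out: once a region is ideally triangulated, the number of arcs and triangles is fixed by the Euler characteristic, and no further arcs can be added without introducing new marked points. Also, as written, your boundary curve $\gamma=\partial\surface'$ passes through no marked point, so it is not an arc and cannot be a side of a triangle; you need $\gamma$ to be a loop based at a marked point (the paper avoids this by letting the boundary component of the model torus carry a marked point). With those two fixes---tightening the neighbour count to $\{q\}$ and routing $\gamma$ through a marked point---your construction becomes exactly the paper's.
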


\begin{proof} It suffices to show the existence of a triangulation $\triangtau$ with the property that for some puncture $p$ and some pair $\triang$, $\triang'$, of different triangles of $\triangtau$, all the sides of $\triang$ and all the sides of $\triang'$ are loops based at $p$. Such a triangulation $\triangtau$ can be constructed as follows. Consider the triangulation $\triangsigma$ shown in Figure \ref{Fig:torus_1bound_1punct}, where the underlying surface $\surfprime$ is the once-punctured torus with exactly one boundary component and exactly one marked point on such component.
\begin{figure}[!h]
                \caption{Triangulation without cuts}\label{Fig:torus_1bound_1punct}
                \centering
                \includegraphics[scale=0.6]{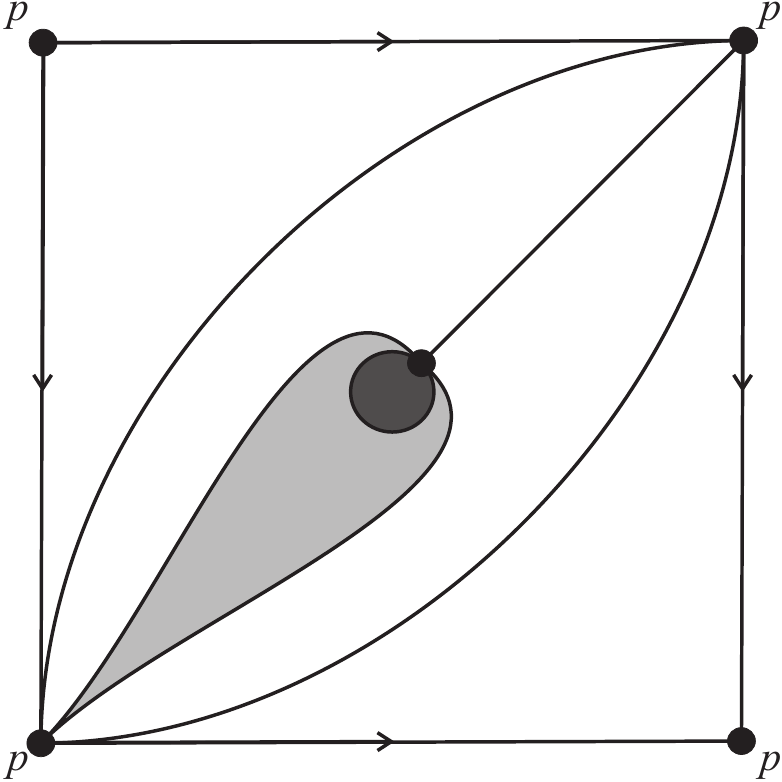}
        \end{figure}
If $\surf=\surfprime$, we take $\triangtau$ to be $\triangsigma$. Otherwise, it is clear that $\surf$ can be obtained from $\surfprime$ by gluing a surface $\surface''$ of genus $g(\surface)-1$ to $\surfaceprime$ along the boundary of a disc cut out in the interior of the unique non-internal triangle of $\triangsigma$ (which has been shadowed in Figure \ref{Fig:torus_1bound_1punct}), and by suitably adding marked points to $\surface''$ and/or to the unique boundary component of $\surfaceprime$ if necessary. Completing $\sigma$ to a triangulation of $\surf$ yields a triangulation $\triangtau$ whose associated QP clearly lacks cuts.
\end{proof}


\section{Cuts and global dimension $\leq 2$}\label{section7}


The aim of this section is to describe the cuts which give rise to algebras of global dimension $\leq 2$.

Let $\surf$ be an oriented surface with marked points and with non empty boundary. Let $\triangtau$ be an ideal valency $\geq 3$-triangulation of $\surf$. Let $d$ be an admissible cut on $\triangtau$. We denote by $\Lambda$ the degree zero subalgebra of the corresponding Jacobian algebra.  The Gabriel quiver $Q(\Lambda)$ of $\Lambda$ can be obtained from $Q(\tau)$ by deleting the arrows that have degree 1. The ideal of relations in $\Lambda$ is generated by the set $\{\partial_\alpha S(\tau), d(\alpha)=1\}$.

We start this section with a result (Lemma \ref{lemma:hook=zero}) that gives some zero relations between the arrows from $\Lambda$. This allows us to describe explicitely all the shapes that a projective indecomposable module can take (Proposition~\ref{prop:shape-projectives}). From this description, we deduce for which configuration a simple has projective dimension $\geq 3$ (Proposition \ref{prop:pd(S_i)>2}). We end the section proving that there exist surface cut algebras associated to almost any surface (Corollary~\ref{cor:existence-surface-algebra}).

\subsection{Zero relations in $\Lambda$}

For arrows $\alpha$, $\beta$ of $\Qtau$, we say that $\alpha\beta$ is a \emph{hook} if the composition $\alpha\beta$ is a path and $\alpha$ and $\beta$ belong to the same triangle of $\triangtau$.

\begin{lemma}\label{lemma:hook=zero}
If $\alpha\beta$ is a hook in $\Lambda$, then for any arrow $\gamma$ of the Gabriel quiver of $\Lambda$, the products $\alpha\beta\gamma$ and $\gamma\alpha\beta$ are zero in $\Lambda$.
\end{lemma}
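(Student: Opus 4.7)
The plan is to apply the Jacobian relation at the degree-$1$ arrow completing the triangle that contains the hook, and then to iterate.

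Since $\alpha\beta$ is a hook in $\Lambda$, the arrows $\alpha,\beta$ lie in a common (necessarily internal) triangle $\triangle$ of $\tau$, which contributes a $3$-cycle $\alpha\beta\delta$ to $S(\tau)$. Since $d(\alpha)=d(\beta)=0$ (both are arrows of $Q(\Lambda)$), the degree condition $d(\alpha\beta\delta)=1$ forces $d(\delta)=1$. Let $v$ denote the vertex of $\triangle$ at which $\delta$ lives, i.e.\ the common endpoint of the two arcs joined by $\delta$. The relation $\partial_\delta S(\tau)=0$ in $\Lambda$ yields
\[
\alpha\beta \;=\; -\epsilon\, c_1 c_2 \cdots c_{\ell-1} \quad\text{in } \Lambda,
\]
where $\delta c_1c_2\cdots c_{\ell-1}$ is the cycle of length $\ell\geq 3$ around $v$ appearing in $S(\tau)$ (with $\epsilon\in\{\pm 1\}$ a sign), and where the right-hand side is understood to be $0$ when $v\in\partial\Sigma$.

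If $v\in\partial\Sigma$, then $\alpha\beta=0$ in $\Lambda$ and both products vanish trivially. Otherwise $v$ is a puncture, and for $\alpha\beta\gamma$ to be a non-trivial composition we need $t(\gamma)=h(\beta)=:k$. The arc $k$ bounds exactly two triangles of $\tau$: inside $\triangle$ the unique arrow out of $k$ is $\delta$, which is excluded since $d(\delta)=1$; so $\gamma$ must lie in the adjacent triangle across $k$, and one checks this coincides with the triangle containing $c_{\ell-1}$ in the cycle around $v$. In particular $c_{\ell-1}\gamma$ is itself a hook in $\Lambda$, and substituting the relation above gives
\[
\alpha\beta\gamma \;=\; -\epsilon\, c_1\cdots c_{\ell-2}\,(c_{\ell-1}\gamma) \quad\text{in } \Lambda.
\]
One now iterates: the third arrow of the triangle containing the new hook $c_{\ell-1}\gamma$ is again of degree $1$, and its Jacobian relation either kills $c_{\ell-1}\gamma$ (if the corresponding corner lies on $\partial\Sigma$, and then $\alpha\beta\gamma=0$) or replaces $c_{\ell-1}\gamma$ by a path around another puncture, giving a longer expression in which a new hook appears one triangle further into $\tau$. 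The argument for $\gamma\alpha\beta=0$ is symmetric, using $h(\gamma)=t(\alpha)=h(\delta)$ and $c_1$ in place of $c_{\ell-1}$.

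The main obstacle is proving termination of this iteration. The substitution process defines a deterministic walk through the triangles of $\tau$ guided by the cut $d$: each step moves to the triangle determined by the puncture cycle at the current ``third-arrow'' vertex. Since $\surf$ has non-empty boundary, $\Lambda$ is finite-dimensional, so non-zero elements of $\Lambda$ are represented by paths of length at most some fixed bound $N$. Each iterative substitution preserves the equality with $\alpha\beta\gamma$ and, whenever the walk crosses a puncture of valency $\geq 4$, strictly increases the length of the substituted expression. An infinite non-terminating walk must therefore eventually yield an expression of length exceeding $N$, forcing $\alpha\beta\gamma=0$; the only remaining concern is a walk that cycles indefinitely through punctures all of valency exactly $3$, which requires a finer combinatorial analysis of the possible closed trajectories of such walks on $\Sigma$.
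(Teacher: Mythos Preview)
Your approach---iterate Jacobian relations at the degree-$1$ arrow of each successive triangle, replacing the current hook by a path around the adjacent puncture---is exactly the mechanism the paper uses. Your identification of the new hook after one substitution is correct, and the alternating structure you gesture at (``one triangle further'') can be made precise and matches the paper's sequence $(\beta_n)_{n\geq 0}$, where one alternately rewrites $\alpha_m\beta_m$ and $\beta_m\alpha_m$.

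The gap you flag is genuine and is precisely the obstruction the paper must overcome. If the walk becomes periodic and every puncture it visits has valency exactly $3$, then each substitution replaces a length-$2$ hook by another length-$2$ path, and the total length never grows; finite-dimensionality of $\Lambda$ gives you nothing. You cannot resolve this by a ``finer combinatorial analysis'' on $\Sigma$ alone, because such valency-$3$ configurations do occur in surfaces with boundary.

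The paper's fix is to \emph{embed} $\surf$ and $\tau$ into a closed surface $(\widetilde{\Sigma},\widetilde{\mathbb{M}})$ with an extension $\widetilde{\tau}$ of $\tau$, chosen so that every puncture has valency $\geq 3$ and $(\widetilde{\Sigma},\widetilde{\mathbb{M}})$ is not the $4$-punctured sphere. The surjection $\mathcal{P}(Q(\widetilde{\tau}),S(\widetilde{\tau}))\twoheadrightarrow\mathcal{P}(Q(\tau),S(\tau))$ reduces the problem to showing $\alpha\beta\gamma\in J(S(\widetilde{\tau}))$. In the closed surface the walk is automatically periodic, and the exclusion of the $4$-punctured sphere forces at least one puncture on the orbit to have valency $\geq 4$ (a valency-$3$ puncture in such a surface is adjacent to at most one other valency-$3$ vertex). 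One full period then yields $\alpha\beta\gamma\equiv p_1\alpha\beta\gamma p_2$ modulo $J(S(\widetilde{\tau}))$ with $p_1p_2$ of positive length, and iterating shows $\alpha\beta\gamma$ lies in the (closed) Jacobian ideal.
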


\begin{proof}
We only prove that $\alpha\beta\gamma$ is zero in $\Lambda$. The proof that the other product is zero as well is similar.

Without loss of generality we can clearly assume that $\alpha\beta\gamma$ is an actual path, that is, that the head of $\gamma$ is the tail of $\beta$. Furthermore, it clearly suffices to show that $\alpha\beta\gamma$ belongs to the Jacobian ideal $J(S(\tau))$.

Let $(\widetilde{\Sigma},\widetilde{\mathbb{M}})$ be a surface with empty boundary, and $\widetilde{\tau}$ be an ideal triangulation of $(\widetilde{\Sigma},\widetilde{\mathbb{M}})$, with the following properties:
\begin{itemize}
\item $\Sigma\subseteq\widetilde{\Sigma}$, $\mathbb{M}\subseteq\widetilde{\mathbb{M}}$ and $\tau\subseteq\widetilde{\tau}$;
\item every puncture of $(\widetilde{\Sigma},\widetilde{\mathbb{M}})$ is incident to at least three arcs in $\widetilde{\tau}$;
\item $(\widetilde{\Sigma},\widetilde{\mathbb{M}})$ is not a sphere with exactly four punctures.
\end{itemize}
One can construct $(\widetilde{\Sigma},\widetilde{\mathbb{M}})$ and $\widetilde{\tau}$ by gluing suitable, possibly punctured, triangulated polygons to $(\Sigma,\mathbb{M})$ along its boundary components.
In what follows, all the arrows are meant to be arrows of $Q(\widetilde{\tau})$.

For any arrow $\eta$ of $Q(\widetilde{\tau})$ there exists exactly one arrow $\mathfrak{t}(\eta)\neq\eta$ that has the same tail as $\eta$, and exactly one arrow $\mathfrak{h}(\eta)$ that has the same head as $\beta$.

Because of the surjective algebra homomorphism $\mathcal{P}(Q(\widetilde{\tau},S(\widetilde{\tau})))\rightarrow\mathcal{P}(Q(\tau),S(\tau))$, in order to prove that $\alpha\beta\gamma$ belongs to $J(S(\tau))$ it is enough to show that $\alpha\beta\gamma$ belongs\footnote{That $\alpha\beta\gamma\in J(S(\widetilde{\tau}))$ has been proved in \cite[Lemma 3.10]{Ladkani} under a hypothesis which is less general than assuming that every puncture of $(\widetilde{\Sigma},\widetilde{\mathbb{M}})$ has valency at least 3.} to the Jacobian ideal $J(S(\widetilde{\tau}))$. Furthermore, to prove that $\alpha\beta\gamma$ belongs to the Jacobian ideal $J(S(\widetilde{\tau}))$, it is enough to show that there exist paths $p_1$ and $p_2$ with the property that the length of the path $p_1\alpha\beta\gamma p_2$ is greater than 3 and, moreover, $\alpha\beta\gamma-p_1\alpha\beta\gamma p_2\in J(S(\widetilde{\tau}))$.
By the previous paragraph, $\beta$ uniquely determines a sequence of arrows $(\beta_n)_{n\geq 0}$ such that
\begin{enumerate}
\item $\beta_0=\beta$ and $\beta_1=\mathfrak{t}(\beta_0)$;
\item for all $n\geq 0$,
$
\beta_{n+2}=
\begin{cases}
\mathfrak{h}(\beta_{n+1}) & \text{if $\beta_{n+1}=\mathfrak{t}(\beta_n)$;}\\
\mathfrak{t}(\beta_{n+1}) & \text{if $\beta_{n+1}=\mathfrak{h}(\beta_n)$.}
\end{cases}
$
\end{enumerate}

Note that we always have $\beta_1\neq \beta_0$. Furthermore, since $Q(\widetilde{\tau})$ has only finitely many arrows, there exists an $n\geq 0$ such that
$\beta_{n+1}=\beta_0$ and $|\{\beta_0,\ldots,\beta_n\}|=n+1$. This integer $n$ is always odd.

For each $m\in\{0,\ldots,n\}$,
\begin{itemize}
\item if $m$ is even, let $\alpha_m$ be the unique arrow such that $\alpha_m\beta_m$ is a hook, and let $\delta_m$ be the unique arrow such that $\beta_m\delta_m$ is a hook;
\item if $m$ is odd, let $\alpha_m$ be the unique arrow such that $\beta_m\alpha_m$ is a hook, and let $\delta_m$ be the unique arrow such that $\delta_m\beta_m$ is a hook.
\end{itemize}
Note that $\alpha_0=\alpha$ and $\alpha_1=\gamma$.

There certainly exist paths $\lambda_0,\ldots,\lambda_n$, such that
$$
\partial_{\delta_m}(S(\widetilde{\tau})) =
\begin{cases}
\alpha_m\beta_m-\lambda_m\alpha_{m+2}\beta_{m+1} & \text{if $m$ is even;}\\
\beta_m\alpha_m-\beta_{m+1}\alpha_{m+2}\lambda_m & \text{if $m$ is odd;}
\end{cases}
$$
which can be written as the following sequence of equalities:
\begin{eqnarray*}
\alpha_0\beta_0 &=& \partial_{\delta_0}(S(\widetilde{\tau}))+ \lambda_0\alpha_{2}\beta_{1} \\
\beta_1\alpha_1 &=& \partial_{\delta_1}(S(\widetilde{\tau}))+\beta_{2}\alpha_{3}\lambda_1\\
\alpha_2\beta_2 & = & \partial_{\delta_2}(S(\widetilde{\tau}))+\lambda_2\alpha_{4}\beta_{3}\\
\beta_3\alpha_3 &=& \partial_{\delta_3}(S(\widetilde{\tau}))+\beta_{4}\alpha_{5}\lambda_3\\
\alpha_4\beta_4 & = & \partial_{\delta_4}(S(\widetilde{\tau}))+\lambda_4\alpha_{6}\beta_{5}\\
\beta_5\alpha_5 &=& \partial_{\delta_5}(S(\widetilde{\tau}))+\beta_{6}\alpha_{7}\lambda_5\\
\alpha_6\beta_6 & = & \partial_{\delta_6}(S(\widetilde{\tau}))+\lambda_6\alpha_{8}\beta_{7}\\
\beta_7\alpha_7 &=& \partial_{\delta_7}(S(\widetilde{\tau}))+\beta_{8}\alpha_{9}\lambda_7\\
\vdots & \vdots & \vdots\\
\alpha_{n-1}\beta_{n-1}&=&\partial_{\delta_{n-1}}(S(\widetilde{\tau}))+\lambda_{n-1}\alpha_{0}\beta_{n}\\
\beta_n\alpha_n&=&\partial_{\delta_n}(S(\widetilde{\tau}))+\beta_{0}\alpha_{1}\lambda_n.
\end{eqnarray*}
Using this sequence of equalities we see that
\begin{eqnarray*}
\alpha\beta\gamma = \alpha_0\beta_0\alpha_1 &\equiv&  \lambda_0\alpha_{2}\beta_{1}\alpha_1 \\
&\equiv& \lambda_0\alpha_{2}\beta_{2}\alpha_{3}\lambda_1\\
&\equiv& \lambda_0\lambda_2\alpha_{4}\beta_{3}\alpha_{3}\lambda_1\\
&\equiv& \lambda_0\lambda_2\alpha_{4}\beta_{4}\alpha_{5}\lambda_3\lambda_1\\
&\equiv& \lambda_0\lambda_2\lambda_4\alpha_{6}\beta_{5}\alpha_{5}\lambda_3\lambda_1\\
&\equiv& \lambda_0\lambda_2\lambda_4\alpha_{6}\beta_{6}\alpha_{7}\lambda_5\lambda_3\lambda_1\\
&\equiv& \lambda_0\lambda_2\lambda_4\lambda_6\alpha_{8}\beta_{7}\alpha_{7}\lambda_5\lambda_3\lambda_1\\
&\equiv& \lambda_0\lambda_2\lambda_4\lambda_6\alpha_{8}\beta_{8}\alpha_{9}\lambda_7\lambda_5\lambda_3\lambda_1\\
\vdots & \vdots & \vdots\\
&\equiv& \lambda_0\lambda_2\lambda_4\lambda_6\ldots\lambda_{n-1}\alpha_{0}\beta_{n}\alpha_{n}\lambda_{n-2}\ldots\lambda_7\lambda_5\lambda_3\lambda_1\\
&\equiv& \lambda_0\lambda_2\lambda_4\lambda_6\ldots\lambda_{n-1}\alpha_{0}\beta_{0}\alpha_{1}\lambda_n\lambda_{n-2}\ldots\lambda_7\lambda_5\lambda_3\lambda_1\\
&=& p_1 \alpha\beta\gamma p_2,
\end{eqnarray*}
where $x\equiv y$ means that $x-y\in J(S(\tau))$.  That $p_1 \alpha\beta\gamma p_2$ has length greater than 3 follows from the fact that for some $m$, the puncture associated to $\delta_m$ has valency at least 4 in $\widetilde{\tau}$, a fact which itself follows from the easy-to-prove fact that, since $(\widetilde{\Sigma},\widetilde{\mathbb{M}})$ is not a sphere with exactly 4 punctures, if a puncture $p$ has valency 3 in $\widetilde{\tau}$, then at most one of the arcs it is connected to by means of arcs in $\widetilde{\tau}$ has valency 3 in $\widetilde{\tau}$.
\end{proof}

\subsection{Projective indecomposable modules}

Let $i$ be an (internal) arc of $\triangtau$.  We denote $\Delta$ and $\Delta'$ the triangles  that contain $i$. We adopt the following notations and orientations for the arcs that are sides of $\Delta$ and $\Delta'$, and for the quiver $Q(\tau)$ around $i$:

 \[\scalebox{0.8}{
\begin{tikzpicture}[>=stealth,scale=1.2]
\draw (1,0) node[below] {$i$};
\draw (1.5,0.75) node[right] {$k$};
\draw (0.5,0.75) node[left] {$j$};
\draw (1.5,-0.75) node[right] {$j'$};
\draw (0.5,-0.75) node[left] {$k'$};
\draw (0,0) node[left] {$p$};
\draw (2,0) node[right] {$q$};
\draw [->](0,0)--(1,0);
\draw (1,0)--(2,0);
\draw[->](2,0)--(1.5,0.75);
\draw(1.5,0.75)--(1,1.5);
\draw[->](1,1.5)--(0.5,0.75);
\draw (0.5,0.75)--(0,0);
\draw(2,0)--(1.5,-0.75);
\draw[<-](1.5,-0.75)--(1,-1.5);
\draw(1,-1.5)--(0.5,-0.75);
\draw [<-](0.5,-0.75)--(0,0);

\draw (4,0)--(6,0)--(5,1.5)--(4,0)--(5,-1.5)--(6,0);
\draw[blue,thick,->] (5,0)--node [fill=white, inner sep=1pt]{$\alpha$}(4.5,0.75);
\draw[blue,thick,->] (4.5,0.75)--node [fill=white, inner sep=1pt]{$\beta$}(5.5,0.75);
\draw[blue,thick,->] (5.5,0.75)--node [fill=white, inner sep=1pt]{$\gamma$}(5,0);
\draw[blue,thick,->] (5,0)--node [fill=white, inner sep=1pt]{$\alpha'$}(5.5,-0.75);
\draw[blue,thick,->] (5.5,-0.75)--node [fill=white, inner sep=1pt]{$\beta'$}(4.5,-0.75);
\draw[blue,thick,->] (4.5,-0.75)--node [fill=white, inner sep=1pt]{$\gamma'$}(5,0);
\draw[blue,thick,->] (4.5,0.75)--node [fill=white, inner sep=1pt]{$\alpha_2$}(3.5,0.75);
\draw[blue,thick,->] (3.5,0.75)--node [fill=white, inner sep=1pt]{$\alpha_3$}(3,0);
\draw[blue,thick,dotted] (3,0)--(3,-0.5);
\draw[blue,thick,->] (5.5,-0.75)--node [fill=white, inner sep=1pt]{$\alpha'_2$}(6.5,-0.75);
\draw[blue,thick,->] (6.5,-0.75)--node [fill=white, inner sep=1pt]{$\alpha'_3$}(7,0);
\draw[blue,thick,dotted] (7,0)--(7,0.5);

\end{tikzpicture}}
\]
Note that there might be identification between vertices or arcs in this picture. But since the valency of each puncture is at least $3$ the two triangles $\Delta$ and $\Delta'$ must be different.
The arrows $\alpha$, $\alpha'$, $\beta$, $\beta'$, $\gamma$ and $\gamma'$ may or may not exist in the quiver $Q(\tau)$ depending wether $j$, $k$, $j'$ or $k'$ are internal arcs, and if they exists they may or may not be arrows in $Q(\Lambda)$ depending on the cut $d$.  We also denote by $\alpha_1=\alpha,\alpha_2,\alpha_3\ldots$ (resp. $\alpha'_1=\alpha',\alpha'_2,\alpha'_3\ldots$ ) the arrows of $Q(\tau)$ going around the marked point $p$ (resp. $q$). 

\medskip
Now, to the arc $i$ we associate five different  $\Lambda$-modules with top $S(i)$ the simple associated to  $i$:

\[\scalebox{0.8}{
\begin{tikzpicture}[>=stealth,scale=1]

\draw (4,0) node[below] {$i$};
\draw (3,0)--(5,0)--(4,1.5)--(3,0)--(4,-1.5)--(5,0);
\draw[thick, red, ->] (4,0) arc (0:120:1);
\node at (4,-2) {$A^L(i)$ (resp. $A^R(i)$)};

\draw (9,0) node[below] {$i$};
\draw (8,0)--(10,0)--(9,1.5)--(8,0)--(9,-1.5)--(10,0);
\draw[thick, red, ->] (9,0) arc (0:270:1);
\draw[thick, red, dotted, ->] (8,-1) arc (270:305:1);
\node at (9,-2) {$B^L(i)$ (resp. $B^R(i)$)};

\draw (1,-4) node[below] {$i$};
\draw (0,-4)--(2,-4)--(1,-2.5)--(0,-4)--(1,-5.5)--(2,-4);
\draw[thick, red, ->] (1,-4) arc (0:90:1);
\draw[thick, red,<-] (2,-5) arc (270:180:1);

\node at (1,-6) {$C(i)$};

\draw (7,-4) node[below] {$i$};
\draw (6,-4)--(8,-4)--(7,-2.5)--(6,-4)--(7,-5.5)--(8,-4);

\draw[thick, red, ->] (7,-4) arc (0:290:0.8);
\draw[thick, red,->] (7,-4)--(7.5,-4.75)--(6.5,-4.75);
\draw[thick, red,<-] (8,-4.8) arc (270:210:0.85);

\node at (7, -6) {$D^L(i)$ (resp. $D^R(i)$)};

\draw (13,-4) node[below] {$i$};
\draw (12,-4)--(14,-4)--(13,-2.5)--(12,-4)--(13,-5.5)--(14,-4);

\draw[thick, red, ->] (12.5,-3.25) arc (63:293:0.85);
\draw[thick, red,->] (13,-4)--(13.5,-4.75)--(12.5,-4.75);
\draw[thick, red, ->] (13,-4)--(12.5,-3.25)--(13.5,-3.25);
\draw[thick, red, ->] (13.5,-4.75) arc (-120:120:0.85);

\node at (13,-6) {$E(i)$};

\end{tikzpicture}}
\]

\begin{itemize}
\item
The module $A^L(i)$ exists if $\alpha$ is in $Q(\Lambda)$ and if the longest path $\alpha_\ell\ldots\alpha_2\alpha$ around the marked point $p$ in $Q(\Lambda)$ starting from $\alpha$  is not zero in $\Lambda$. It corresponds to the case where $p$ is on the boundary; or $p$ is a puncture and $\gamma'$ is in $\Lambda$; or $p$ is a puncture and $j'$ is an internal arc. In these cases, $A^L(i)$ is defined to be the string module associated to the string $\alpha_\ell\ldots\alpha_2\alpha$. The module $A^R(i)$ corresponds to the string $\alpha'_m\alpha'_2\alpha'$ around $q$. Note that the exponent $R$ and $L$ depends on the choice of the orientation on $i$.

\item The module $B^L(i)$ exists if $\alpha$ is in $Q(\Lambda)$. Then it is the string module associated to $\alpha_{\ell-1}\ldots\alpha_2\alpha$ where $\alpha_\ell\ldots\alpha_2\alpha$ is the longest path in $Q(\Lambda)$ around $p$. The module $B^R(i)$ is defined similarly around $q$.

\item The module $C(i)$ is defined if $\alpha$ and $\alpha'$ are in $Q(\Lambda)$. It is the string module associated to $\alpha_\ell\ldots\alpha_2\alpha\alpha'^{-1}\ldots\alpha'^{-1}_m$. Note that if $\alpha$ and $\alpha'$ are in $Q(\Lambda)$ then the paths $\alpha_\ell\ldots\alpha_2\alpha$ and $\alpha'_m\ldots\alpha'_2\alpha'$ are non zero in $\Lambda$.

\item The module $D^L(i)$ is defined if $p$ is a puncture, $j'$ is internal and $\alpha'$ and $\beta'$ are in $Q(\Lambda')$. Then $D^L(i)$ is the extension of $A^R(i)$ by $A^R(j)$ given by the arrows $\alpha$ and $\beta'$. 

\item The module $E(i)$ is defined if $p$ and $q$ are puncture and if $\alpha$, $\alpha'$, $\beta$ and $\beta'$ are arrows in $Q(\Lambda)$. Its socle is the semi-simple module $S(k)\oplus S(k')$ and it is the extension of $D^L(i)/S(k)$ by $S(k')$ given by the arrows $\alpha_\ell$ and $\beta'$.

\end{itemize}

\begin{proposition}\label{prop:shape-projectives}
Let $\tau$ be a valency $\geq 3$-triangualtion, $d$ an admissible cut and $\Lambda$ the degree zero subalgebra of $\mathcal{P}(Q(\tau),S(\tau),d)$. Let $i$ be an internal arc in $\tau$. Then the indecomposable projective module $P(i)$ associated to $i$ is either simple, or isomorphic to $A^{L,R}(i)$, $B^{L,R}(i)$, $C(i)$, $D^{L,R}(i)$ or $E(i)$.
\end{proposition}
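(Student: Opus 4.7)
The plan is to describe $P(i) = e_i\Lambda$ by enumerating paths in $Q(\Lambda)$ starting at $i$ modulo the relations $\partial_a S(\tau)=0$ for arrows $a$ with $d(a)=1$. In $Q(\tau)$ the only arrows leaving $i$ are $\alpha$ and $\alpha'$, each present in $Q(\Lambda)$ iff its $d$-value is $0$; if neither is, then $P(i)=S(i)$ and we are in the simple case. Otherwise the radical of $P(i)$ decomposes into contributions from paths beginning with $\alpha$ (going around $p$) and with $\alpha'$ (going around $q$), so I analyze one side at a time, the other being symmetric.

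Focus on the $\alpha$-side. At the head $k$ of $\alpha$, $Q(\Lambda)$ contains at most two outgoing arrows: the hook arrow $\beta$ in $\Delta$ and the continuation arrow $\alpha_2$ around $p$. By Lemma~\ref{lemma:hook=zero} the hook $\alpha\beta$ cannot be extended, and the same applies at every subsequent step, so any path that takes a hook in its interior vanishes, while a hook at the very end simply terminates the path. This forces the paths starting with $\alpha$ to have the form $\alpha_1\alpha_2\cdots\alpha_r$, possibly followed by a single terminal hook step. When $p$ is a puncture of valency $V$, exactly one arrow $\alpha_s$ in the cycle $\alpha_1\cdots\alpha_V$ (a summand of $S(\tau)$ of total degree $1$) satisfies $d(\alpha_s)=1$, so the path in $Q(\Lambda)$ stops at $\alpha_{s-1}$; and the cyclic relation $\partial_{\alpha_s} S(\tau)=0$ identifies the round-trip $\alpha_{s+1}\cdots\alpha_V\alpha_1\cdots\alpha_{s-1}$ with the hook $\beta_s\gamma_s$ from the triangle containing $\alpha_s$. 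A case check on whether $p$ is a boundary point, or a puncture with $\gamma'\in Q(\Lambda)$, or a puncture with $j'$ internal tells us exactly when the top segment of this path remains nonzero in $\Lambda$: this distinguishes the long shape $A^L(i)$ from the truncated shape $B^L(i)$.

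Assembling the $\alpha$- and $\alpha'$-contributions then yields the remaining modules: $C(i)$ when both sides are present independently at the source; and the larger shapes $D^L(i), D^R(i), E(i)$ when additional arrows $\beta$ or $\beta'$ lie in $Q(\Lambda)$ and a punctured endpoint with opposite arc internal creates a nonzero hook extension of $A^L(i)$ or $A^R(i)$ by a composition factor on the opposite side. A bookkeeping over the cut values on $\alpha,\beta,\gamma,\alpha',\beta',\gamma'$ combined with the boundary/puncture status of $p$ and $q$ and the internal/boundary status of $j,k,j',k'$ exhausts the possibilities. The main obstacle is iterating Lemma~\ref{lemma:hook=zero} along the cycle around a puncture to pinpoint when the long path is identified with a nonzero hook versus when it is forced to vanish; this propagation through the chain of cyclic-derivative relations is precisely what separates $A^L$ from $B^L$ and, in turn, governs which of the more complex shapes $D^{L,R}(i)$ or $E(i)$ appears.
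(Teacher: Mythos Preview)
Your overall strategy matches the paper's: enumerate paths from $i$ using Lemma~\ref{lemma:hook=zero} to constrain their shape, then case-split on which of $\alpha,\alpha',\beta,\beta'$ lie in $Q(\Lambda)$ and on the boundary/puncture status of $p,q$. However, two points in your execution are off and would need correction before the argument goes through.

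First, your identification of the relation governing the $A^L$ versus $B^L$ dichotomy is misplaced. You invoke $\partial_{\alpha_s}S(\tau)=0$ for the unique degree-$1$ arrow $\alpha_s$ in the cycle around $p$, but that relation equates the path $\alpha_{s-1}\cdots\alpha_1\alpha_V\cdots\alpha_{s+1}$ (from $h(\alpha_s)$ to $t(\alpha_s)$) with a hook; it says nothing directly about the maximal path \emph{from $i$}, namely $\alpha_{s-1}\cdots\alpha_1$, unless $s=V$. The paper's argument is that this maximal path vanishes in $\Lambda$ precisely when it \emph{equals} a relation, and the only candidate is $\partial_{\gamma'}S(\tau)$ with $\gamma'=\alpha_V$; this forces $d(\gamma')=1$ (so $s=V$) \emph{and} forces $\Delta'$ to be non-internal (otherwise $\partial_{\gamma'}S(\tau)=\alpha_{V-1}\cdots\alpha_1-\beta'\alpha'$ is not a zero relation). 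This is exactly what singles out $B^L(i)$, and your sketch does not isolate it.

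Second, your phrase ``possibly followed by a single terminal hook step'' is misleading. By Lemma~\ref{lemma:hook=zero}, any hook $\beta_r\alpha_r$ satisfies $(\beta_r\alpha_r)\alpha_{r-1}=0$, so a hook can only occur as the \emph{first} two arrows out of $i$, namely $\beta\alpha$ itself; it cannot be appended after several steps around $p$. The paper makes this explicit: when $\beta\alpha\neq 0$ the only two maximal paths through $\alpha$ are $\alpha_\ell\cdots\alpha$ and $\beta\alpha$, and the nonvanishing of $\beta\alpha$ is equivalent to $q$ being a puncture with $d(\gamma)=1$. (There is also a consistent composition-order slip in your write-up: the hook you mean is $\beta\alpha$, not $\alpha\beta$.) Once these two points are fixed, your case analysis collapses onto the paper's.
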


\begin{proof}
Assume that $P(i)$ is not simple, then $i$ is not a sink in the quiver $Q(\Lambda)$. Hence we may assume that $\alpha$ is in $Q(\Lambda)$. Let $\alpha_\ell\ldots\alpha_2\alpha$ be the maximal composition of arrows around $p$ in the quiver $Q(\Lambda)$. If this composition vanishes in $\Lambda$ then $\alpha_\ell\ldots\alpha_2\alpha$ is in the Jacobian ideal and so $\gamma'$ exists, has degree $1$ and $\alpha_\ell\ldots\alpha_2\alpha=\partial_{\gamma'}S(\tau)$. 
This implies that $\Delta'$ is not internal. Indeed if $\Delta'$ would be internal, $\alpha'$ and $\beta'$ would exist and have degree zero, so we would have $\partial_{\gamma'}S(\tau)=\alpha_\ell\ldots\alpha_2\alpha-\beta'\alpha'$ in $\Lambda$. Therefore $j'$ is a boundary segment. Then if the hook $\beta\alpha$ exists in $Q(\Lambda)$, it vanishes since $q$ is on the boundary. Moreover for any hook $\beta_i\alpha_i$ the composition $\beta_i\alpha_i\alpha_{i-1}$ vanishes by Lemma \ref{lemma:hook=zero}. Hence there only one maximal path in $\Lambda$ starting f from $i$ which is $\alpha_{\ell-1}\ldots\alpha_2\alpha$, and $P(i)$ is isomorphic to the module $B^L(i)$.

Assume now that the path $\alpha_\ell\ldots\alpha_2\alpha$ does not vanish in $\Lambda$.  If the hook $\beta\alpha$ vanishes (or does not exist in $\Lambda$), then by the same argument as above, there exists only one maximal path starting with $\alpha$ in $\Lambda$. 

Assume that $\beta$ is in $Q(\Lambda)$ and $\beta\alpha\neq 0$ in $\Lambda$. Then $\Delta$ is an internal triangle, $q$ is a puncture and $d(\gamma)=1$. With similar arguments as above one deduces that there are only two maximal paths starting with $\alpha$ which are $\alpha_\ell\ldots\alpha$ and $\beta\alpha$. 

Combining these two remarks one deduces the following:

\begin{itemize}
\item if $\beta\alpha=0$ (or does not exist) in $\Lambda$ and $\alpha'$ is not in $Q(\Lambda)$ then $P(i)$ is isormophic to $A^L(i)$;
\item if $\beta\alpha=0$ (or does not exist) in $\Lambda$, $\alpha'$ is in $Q(\Lambda)$ and $\beta'\alpha'=0$ (or does not exist) then $P(i)$ is isomorphic to $C(i)$;
\item if $\beta\alpha=0$ (or does not exist) in $\Lambda$, $\alpha'$ is in $Q(\Lambda)$ and $\beta'\alpha'\neq 0$ then $P(i)$ is isomorphic to $D^R(i)$;
\item if $\beta\alpha\neq 0$, $\alpha'$ is in $Q(\Lambda)$ and $\beta'\alpha'=0$ (or does not exist) then $P(i)$ is isomorphic to $D^L(i)$;
\item if $\beta\alpha\neq 0$ and $\beta'\alpha'\neq 0$ in $\Lambda$, then $P(i)$ is isomorphic to $E(i)$.
\end{itemize}

\end{proof}

\subsection{Projective dimension of simples}
 In this subsection, we adopt the following notation. By a black bullet we denote a marked point that is a puncture, and by a gray square a marked point that is on the boundary. A side of a triangle that is a boundary segment  is coloured in gray and an angle (between two internal arcs) corresponding to an arrow of $Q(\tau)$ of degree $1$ is denoted by a double angle.

\begin{proposition}\label{prop:pd(S_i)>2}
Let $\tau$, $d$, $\Lambda$ and $i$ as in Proposition \ref{prop:shape-projectives}. We assume that the orientations of the arrows around a marked point is counterclockwise. Then the simple $\Lambda$-module $S=S(i)$ has projective dimension $\geq 3$ if and only $i$ lies in one of the following configurations:

\[\scalebox{0.8}{
\begin{tikzpicture}[>=stealth,scale=1]
\draw (1,0) node[below] {$i$};
\draw (-1,1.5)--(1,1.5)--(0,0)--(2,0)--(1,1.5);
\draw [gray, line width=5pt] (-1,1.5)--(0,0);
\node at  (1,1.5) {$\bullet$};
\draw [thick] (1.75,0.36) arc (120:180:0.4);
\draw [thick] (0.6,1.5) arc (180:240:0.4);

\draw (7,0) node[below] {$i$};
\draw (6,0)--(8,0)--(7,1.5)--(9,1.5)--(8,0);
\draw [gray, line width=5pt] (6,0)--(7,1.5);
\node at (8,0) {$\bullet$};
\draw [thick] (7.75,0.35) arc (120: 180:0.4);
\draw[thick] (8.6, 1.5) arc (180:240:0.4);

\end{tikzpicture}}\]

\end{proposition}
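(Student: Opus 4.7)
My plan is to compute the minimal projective resolution of $S(i)$ directly, organized according to the classification of $P(i)$ in Proposition~\ref{prop:shape-projectives}. The key preliminary observation is that $\operatorname{pd}(S(i)) \geq 3$ is equivalent to the second syzygy $\Omega^2 S(i) = \ker(P_1 \to P(i))$ being non-projective, where $P_1 \twoheadrightarrow \operatorname{rad} P(i)$ is the projective cover of the radical. The strategy is therefore to rule out this behavior in all but the two claimed configurations by a case analysis on the shape of $P(i)$.

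If $P(i)$ is simple there is nothing to check. For $P(i) \in \{A^{L}(i), A^{R}(i)\}$, the module $P(i)$ is a string module running once around one puncture, and by Proposition~\ref{prop:shape-projectives} applied to the vertex $h(\alpha)$ its radical is isomorphic to an indecomposable projective of type $A$ attached to a shorter configuration, so $\operatorname{pd}(S(i)) \leq 1$. For $P(i) \in \{B^{L}(i), B^{R}(i)\}$, the radical is a proper submodule of a projective indecomposable, and a short computation using Lemma~\ref{lemma:hook=zero} shows that the kernel of its projective cover is itself the image of a projective generator coming from the triangle on the opposite side of $i$ — provided a certain relation at the missing arrow $\alpha_\ell$ ``closes up''. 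When it does not, the second syzygy picks up a non-projective summand. For $P(i) \in \{C(i), D^{L,R}(i), E(i)\}$, the radical decomposes as an extension whose subquotients are string modules already handled by the previous cases, and only the interaction between a $B$-type piece and an unfavorable neighboring triangle can create a non-projective second syzygy.

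The crucial step is to identify exactly the local configurations where $\Omega^2 S(i)$ fails to be projective. Tracing through the case analysis and repeatedly applying Lemma~\ref{lemma:hook=zero} and Proposition~\ref{prop:shape-projectives} to the neighbors of $i$, the failure happens only when (a) one of the two triangles containing $i$ is internal with a puncture as its third vertex and contains a degree-$1$ arrow that annihilates the long path around the puncture, and (b) the other triangle containing $i$ is non-internal, bordering the surface, and contributes an additional degree-$1$ arrow that forces an extra relation at the top of $P(i)$. Up to the symmetry between the two sides of $i$ and between left/right orientations around the puncture, this matches precisely the two pictures in the statement. The main obstacle will be the bookkeeping for the more intricate projectives $D^{L,R}(i)$ and $E(i)$, whose radicals are extensions rather than string modules: in these cases one must compare the projective cover of each composition piece with the structure of the full radical and use the hook relations carefully to verify that the kernel consists only of ``closing'' relations coming from neighboring triangles and punctures, which is exactly what produces projective second syzygies in all non-exceptional configurations.
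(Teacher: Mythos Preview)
Your overall strategy matches the paper's: organize the computation of $\Omega^2 S(i)$ according to the classification of $P(i)$ in Proposition~\ref{prop:shape-projectives}, and locate the configurations in which $\Omega^2 S(i)$ is not projective. However, your analysis of the case $P(i)\in\{A^L(i),A^R(i)\}$ contains a genuine error that breaks the argument.

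You assert that when $P(i)=A^L(i)$, the radical of $P(i)$ is itself an indecomposable projective of type $A$, so that $\operatorname{pd} S(i)\leq 1$. This is false. The radical of $A^L(i)$ is the string module $A^R(j)$ (or $S(j)$ if $\alpha_2\notin Q(\Lambda)$), where $j=h(\alpha)$; but $A^R(j)$ is the projective $P(j)$ only when $j$ has a single outgoing arrow in $Q(\Lambda)$. If the arrow $\beta:j\to k$ of the triangle $\Delta$ lies in $Q(\Lambda)$ (necessarily with $\beta\alpha=0$ since $P(i)=A^L(i)$), then $P(j)$ is strictly larger than $A^R(j)$, and one must continue the resolution. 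In the paper's analysis this is exactly where the bad configurations first appear: when moreover $\Omega S=S(j)$ and $P(j)\simeq B^L(j)$, one gets $\Omega^2 S\simeq B^R(k)$, which is not projective. Thus the $A$-case already contributes configurations with $\operatorname{pd} S(i)\geq 3$; it is not disposed of at projective dimension~$1$.

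This error then propagates. Your treatment of $P(i)\in\{C(i),D^{L,R}(i),E(i)\}$ explicitly reduces to ``string modules already handled by the previous cases''; in the paper, Case~3 ($P(i)=C(i)$) indeed has $\Omega S\simeq A^R(j)\oplus A^R(j')$ and is analyzed exactly as in Case~1, producing further bad configurations. With your Case~1 incorrect, that reduction no longer establishes anything. Your summary of the failure mechanism---an internal triangle with a puncture plus a non-internal neighboring triangle with a suitable degree-$1$ arrow---is roughly the right shape, but you have not shown that these are the \emph{only} obstructions, because the case in which most of them arise has been dismissed prematurely.
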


\begin{proof}
For a module $M$ we denote by $P(M)$ its projective cover, and for an arc $\ell$ we denote $P(\ell)$ the projective cover of the simple $S(\ell)$. 

If $P(i)$ is simple, then the projective dimension of $S(i)$ is $0$. 

\medskip
\textbf{Case 1: $P(i)\simeq A^L(i)$.}

\noindent
Then the first syzygy $\Omega S$ of $S$ is isomorphic to $S(j)$ or to $A^R(j)$ depending wether $\alpha_2$ is in $Q(\Lambda)$ or not. Hence we have $P(\Omega S)=P(j)$. 
Moreover since $P(i)=A^L(i)$, the arrow $\alpha'$ is not in $Q(\Lambda)$ and either $\beta$ is not in $Q(\Lambda)$ or $\beta\alpha=0$ in $\Lambda$. We distinguish the two cases.

\begin{itemize}
\item \textit{$\beta$ is not $Q(\Lambda)$:} In that case, $A^R(j)$ (resp. $S(j)$ if $\alpha_2$ is not in $Q(\Lambda)$) is projective, hence $S$ has projective dimension $1$.
\item \textit{$\beta$ is in $Q(\Lambda)$ and $\beta\alpha=0$:} In that case, since $d(\gamma)=1$, the module $A^R(k)$ (resp. $S(k)$ if $\beta_2$ is not in $Q(\Lambda)$) is projective. If $\Omega S=A^R(j)$, then there are two arrows starting from $j$ in $Q(\Lambda)$. So $P(j)$ is isomorphic to $C(j)$, $D^{L,R}(j)$ or $E(j)$. It cannot be $D^R(j)$ and $E(j)$ since $\alpha$ is in $Q(\Lambda)$. In the other two cases we have $\Omega^2S\simeq A^R(k)$ (resp. $S(k)$ if $\beta_2$ is not in $Q(\Lambda)$) which is projective. Thus $S$ has projective dimension two.

If $\Omega S=S(j)$, then there is only one arrow in $Q(\Lambda)$ starting from $j$, so $P(j)$ is of type $A^L(j)$ or
$B^L(j)$. If it is $A^L(j)$ then $\Omega^2S$ is isomorphic to $A^R(k)$ (or $S(k)$) and the projective dimension of $S$ is two.

If $P(j)=B^L(j)$, then $\Omega^2S$ is isomorphic to $B^L(k)$ which is not projective. So the projective dimension of $S$ is $\geq 3$, and $i$ lies in one of the following configurations:

\[\scalebox{1}{
\begin{tikzpicture}[>=stealth,scale=0.8]
\draw (1,0) node[below] {$i$};
\draw (0,0)--(2,0)--(1,1.5)--(0,0)--(1,-1.5)--(2,0)--(1,1.5)--(-1,1.5);
\draw [gray, line width=5pt] (-1,1.5)--(0,0);
\node at  (1,1.5) {$\bullet$};

\draw [thick] (1.75,0.36) arc (120:240:0.4);
\node at (2,0) [gray] {$\blacksquare$};
\draw [thick] (0.6,1.5) arc (180:240:0.4);

\draw (7,0) node[below] {$i$};
\draw (6,0)--(8,0)--(7,1.5)--(6,0)--(7,-1.5)--(8,0)--(7,1.5)--(5,1.5);
\draw [gray, line width=5pt] (5,1.5)--(6,0);
\draw [gray, line width=5pt] (7,-1.5)--(8,0);
\node at  (7,1.5) {$\bullet$};
\draw [thick] (7.75,0.36) arc (120:180:0.4);

\draw [thick] (6.6,1.5) arc (180:240:0.4);

\draw (13,0) node[below] {$i$};
\draw (12,0)--(14,0)--(13,1.5)--(12,0)--(13,-1.5)--(14,0)--(13,1.5)--(11,1.5);
\draw [gray, line width=5pt] (11,1.5)--(13,-1.5);
\draw [gray, line width=5pt] (13,-1.5)--(14,0);

\node at  (13,1.5) {$\bullet$};
\draw [thick] (13.75,0.36) arc (120:180:0.4);
\draw [thick] (12.6,1.5) arc (180:240:0.4);

\end{tikzpicture}}\]

In the case where $P(i)\simeq A^R(i)$, then we obtain the same configurations (up to central symmetry).
 
\end{itemize}

\medskip
\textbf{Case 2: $P(i)\simeq B^L(i)$.} 

\noindent
In this case, we have $\Omega S\simeq B^R(j)$ (note that the module $B^R(j)$ might be simple), hence the projective cover of $\Omega S$ is $P(j)$. Again, we seperate the cases depending on the existence of $\beta$ in $Q(\Lambda)$.

\begin{itemize}

\item \textit{$\beta$ is not in $Q(\Lambda)$:} Then we have $P(j)=A^R(j)$ and $\Omega^2S=S(k')$. 

\item \textit{$\beta$ is in $Q(\Lambda)$:} Then $\beta\alpha$ vanishes in $\Lambda$ since $q$ is on the boundary. By the same argument as in Case 1, the projective $P(j)$ is isomorphic to $B(j)$ or $D^L(j)$ (or to $A^L(j)$ if $\Omega S=S(j)$), and $\Omega^2S\simeq S(k')\oplus A^R(k)$ ( or $S(k')\oplus S(k)$ if $k$ is a sink in $Q(\Lambda)$). Moreover, the module $A^R(k)$ is always projective. 
\end{itemize} 

Hence, in this situation, $S$ has projective dimension $\geq 3$ if and only if the simple $S(k')$ is not projective, i.e. if $i$ lies in the following configuration:

\[\scalebox{1}{
\begin{tikzpicture}[>=stealth,scale=0.8]
\draw (1,0) node[above] {$i$};
\draw (0,0)--(1,-1.5)--(-1,-1.5)--(0,0)--(2,0);
\draw [gray, line width=5pt] (1,-1.5)--(2,0);
\node at  (0,0) {$\bullet$};

\draw [thick] (0.4,0) arc (0:-60:0.4);
\draw [thick] (-0.6,-1.5) arc (0:60:0.4);

\end{tikzpicture}}\]

\medskip

\textbf{Case 3: $P(i)\simeq C(i)$.}

In this case $\Omega S$ is isomorphic to $A^R(j)\oplus A^R(j')$, (or $A^R(j)\oplus S(j')$ or $S(j)\oplus A^R(j')$ or $S(j)\oplus S(j')$ depending wether $\alpha_2$ and $\alpha'_2$ are in $Q(\Lambda)$). The situation is then the same as in Case 1, that is $S$ has projective dimension $\geq 3$ if and only if $P(j)$ (or $P(j')$) are isomorphic to $B^L(j)$ (or $B^L(j')$). This corresponds to the following situations (up to central symmetry):  

\[\scalebox{1}{
\begin{tikzpicture}[>=stealth,scale=0.8]
\draw (1,0) node[below] {$i$};
\draw (0,0)--(2,0)--(1,1.5)--(0,0)--(1,-1.5)--(2,0)--(1,1.5)--(-1,1.5);
\draw [gray, line width=5pt] (-1,1.5)--(1,-1.5);
\node at  (1,1.5) {$\bullet$};

\draw [thick] (1.6,0) arc (180:120:0.4);
\node at (2,0) [gray] {$\blacksquare$};
\draw [thick] (0.6,1.5) arc (180:240:0.4);

\draw (7,0) node[below] {$i$};
\draw (6,0)--(8,0)--(7,1.5)--(6,0)--(7,-1.5)--(8,0)--(7,1.5)--(5,1.5);
\draw [gray, line width=5pt] (5,1.5)--(6,0);
\node at (8,0) [gray] {$\blacksquare$};
\node at  (7,1.5) {$\bullet$};
\draw [thick] (7.75,0.36) arc (120:180:0.4);
\draw [thick] (6.6,1.5) arc (180:240:0.4);
\draw[thick] (6.8,-1.2) arc (120:60:0.4);

\draw (13,0) node[below] {$i$};
\draw (12,0)--(14,0)--(13,1.5)--(12,0)--(13,-1.5)--(14,0)--(13,1.5)--(11,1.5);
\draw [gray, line width=5pt] (11,1.5)--(12,0);
\node at (14,0) [gray] {$\blacksquare$};
\node at  (13,1.5) {$\bullet$};
\draw [thick] (13.75,0.36) arc (120:180:0.4);
\draw [thick] (12.6,1.5) arc (180:240:0.4);
\draw[thick] (12.4,0) arc (0:-60:0.4);

\end{tikzpicture}}\]

\medskip

\textbf{Case 4: $P(i)\simeq D^L(i)$.}

\noindent
Then the module $\Omega S$ has the following shape:

\[\scalebox{0.8}{
\begin{tikzpicture}[>=stealth,scale=1]
 \draw (1,0) node[below] {$i$};
 \node at (0.5,0.75) [right]{$j$};
 
 \node at (0,0) {$\bullet$};
 \draw [thick] (0.4,0) arc (0:-60:0.4);
 \node at (1.5,-0.75) [right] {$j'$};
\draw (0,0)--(2,0)--(1,1.5)--(0,0)--(1,-1.5)--(2,0);
\draw[thick, red, ->] (0.5,0.75) arc (60:300:0.9);
\draw[thick, red,->] (1.5,-0.75)--(0.5,-0.75);
\draw[thick, red,->] (1.5,-0.75) arc (-120:-30:1);

\end{tikzpicture}}\]

The top of this module is $S(j)\oplus S(j')$, so we have $P(\Omega S)\simeq P(j)\oplus P(j')$. 

\begin{itemize}
\item \textit{$\beta$ is not in $Q(\Lambda)$:} Then the projective $P(j)$ is isomorphic to $A^R(j)$. Indeed it is not simple since each puncture has valency at least $3$ and it cannot be $B^R(j)$ since $\alpha$ is in $Q(\Lambda)$.
The projective $P(j')$ can be isomorphic to $A^L(j')$, $B^L(j')$, $C(j')$ or $D^L(j')$. If $P(j')\neq B^L(j')$ then $\Omega^2S$ is isomorphic to $A^R(k')$ (or $S(k')$) which is projective. If $P(j')\simeq B^L(j')$ then $\Omega^2 S$ is isomorphic to $B^R(k')$ which is not projective.   
  
\item \textit{$\beta$ is in $Q(\Lambda)$:} Then the hook $\beta\alpha$ vanishes in $\Lambda$. The projective $P(j)$ is isomorphic to $C(j)$ or $D^L(j)$, and the projective $P(j')$ is isomorphic to $A^L(j')$, $B^L(j')$, $C(j')$ or $D^L(j')$. As in the previous case, if $P(j')$ is not isomorphic to $B^L(j')$ then $\Omega^2 S$ is isomorphic to $A^R(k)\oplus A^R(k')$ which are both projective. Hence $S$ has projective dimension $\geq 3$ if and only if $P(j')\simeq B^L(j')$.

\end{itemize}

Therefore, if $P(i)\simeq D^L(i)$, the simple $S$ has projective dimension $\geq 3$ if and only if $i$ lies in the following configuration:

\[\scalebox{0.8}{
\begin{tikzpicture}[>=stealth,scale=1]
\draw (1,0) node[below] {$i$};
\draw (2,0)--(0,0)--(1,-1.5)--(2,0)--(3,-1.5)--(1,-1.5);
\draw [gray, line width=5pt] (2,0)--(3,-1.5);
\node at  (0,0) {$\bullet$};
\draw [thick] (.25,-0.36) arc (300:360:0.4);
\node at (1,-1.5)  {$\bullet$};
\draw [thick] (1.4,-1.5) arc (0:60:0.4);

\end{tikzpicture}}\]

\medskip

\textbf{Case 5: $P(i)\simeq E(i)$.}

\noindent
Then  $\Omega S$ has the following form.
\[\scalebox{0.8}{
\begin{tikzpicture}[>=stealth,scale=1]
 \draw (1,0) node[below] {$i$};
 \node at (0.5,0.75) [above]{$j$};
 \node at (1.5,-0.75) [below] {$j'$};
\draw (0,0)--(2,0)--(1,1.5)--(0,0)--(1,-1.5)--(2,0);
\draw[thick, red, ->] (0.5,0.75) arc (60:300:0.85);
\draw[thick, red,->] (1.5,-0.75)--(0.5,-0.75);
\draw[thick, red,->] (1.5,-0.75) arc (-120:120:0.85);
\draw [thick, red, <-] (1.5,0.75)--(0.5,0.75);
\node at (0,0) {$\bullet$};
\node at (2,0) {$\bullet$};
 \draw [thick] (0.4,0) arc (0:-60:0.4);
  \draw [thick] (1.6,0) arc (180:120:0.4);

\end{tikzpicture}}\]

The top of this module is $S(j)\oplus S(j')$ so $P(\Omega S)\simeq P(j)\oplus P(j')$. Then as in the previous case, the projective $P(j)$ (resp. $P(j')$) is isomorphic to $B(j)$ or to $D^L(j)$ (resp. $B(j')$ or $D^L(j')$). In all these cases we get $\Omega^2S\simeq A^R(k)\oplus A^R(k')$ and is projective.

\end{proof}

We end this section with the following result, which implies that for any surface $\surf$ with boundary there exists an associated surface cut algebra.

\begin{corollary}\label{cor:existence-surface-algebra} Let $\surf$ be a surface with non empty boundary which is not:
\begin{itemize}
\item a $n$-gon with $n\leq 3$;
\item a once punctured monogon or digon;
\item a twice punctured monogon.
\end{itemize}
Then there exists a valency $\geq 3$-triangulation $\tau$ and an admissible cut $d$ such that the corresponding cut algebra $\Lambda$ has global dimension $\leq 2$.
\end{corollary}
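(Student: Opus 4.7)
The plan is to construct explicitly a valency $\geq 3$-triangulation $\tau$ of $\surf$ together with an admissible cut $d$ on $Q(\tau)$ such that no arc $i\in\tau$ sits in either of the two local configurations forbidden by Proposition \ref{prop:pd(S_i)>2}. Granting such a pair $(\tau,d)$, every simple $\Lambda(\tau,d)$-module has projective dimension at most $2$, and hence the cut algebra $\Lambda=\Lambda(\tau,d)$ has global dimension at most $2$. The unpunctured case is already covered by Amiot--Grimeland, so from now on I may assume that $\surf$ has at least one puncture.

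The first step is to build the triangulation. Under the hypotheses of the corollary, together with the non-emptiness of $\partial\Sigma$, I claim that $\surf$ admits a valency $\geq 3$-triangulation $\tau$ with the further property that each puncture $p$ is a vertex of at least one internal triangle $\triangle_p$ sharing no side with $\partial\Sigma$. This is obtained by first triangulating small neighbourhoods of each puncture with enough arcs to force valency at least $3$ and to produce an interior triangle at $p$, then extending to the remainder of $\surf$; the excluded cases in the statement---the $n$-gon with $n\leq 3$, the once-punctured monogon or digon, and the twice-punctured monogon---are precisely the cases in which there is too little room to achieve this. For a few borderline small surfaces admitted by the hypothesis (notably the once-punctured triangle, where the interior-triangle condition is awkward) one instead exhibits by hand an ad-hoc pair $(\tau,d)$ that directly avoids the bad configurations.

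For the second step, define $d$ as follows: for each puncture $p$, declare the degree-$1$ arrow of the cycle around $p$ in $S(\tau)$ to be the unique arrow of that cycle lying inside the interior triangle $\triangle_p$; for each internal triangle $\triangle$ distinct from all the $\triangle_p$, pick any arrow of its $3$-cycle not already constrained to carry degree $0$ to be of degree $1$; all other arrows of $Q(\tau)$ receive degree $0$. Admissibility is then immediate: each cycle of $S(\tau)$ contains exactly one degree-$1$ arrow, and the assignments are globally consistent because the constraint coming from $\triangle_p$ is compatible with the one coming from the cycle around $p$. Verification that no bad configuration arises is then via Proposition \ref{prop:pd(S_i)>2}: both bad configurations require a puncture $p$ placed at a specific corner of a triangle containing a boundary segment, together with a degree-$1$ arrow incident to $p$ at that corner, but by construction the degree-$1$ arrow at $p$ lies inside $\triangle_p$ and hence in no triangle sharing a side with $\partial\Sigma$. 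The main obstacle is the case analysis of the first step: for each small punctured surface allowed by the hypothesis one must check by inspection that a triangulation (and compatible cut) with the required properties actually exists, and the precise list of exclusions in the statement of the corollary is exactly what makes this possible.
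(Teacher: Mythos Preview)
Your overall strategy---build a specific pair $(\tau,d)$ and then invoke Proposition~\ref{prop:pd(S_i)>2}---is the same as the paper's, and your reading of the two forbidden configurations is correct: each of them forces a degree-$1$ arrow at a puncture inside a triangle that has a boundary side, so ensuring that the degree-$1$ arrow of every puncture cycle sits in an \emph{internal} triangle does rule them out. The problem is the step you label ``Admissibility is then immediate''. It is not.

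Concretely, an arrow $\alpha$ of $Q(\tau)$ lying at a puncture corner $q$ of an internal triangle $\triangle$ belongs simultaneously to the $3$-cycle of $\triangle$ and to the cycle around $q$. In your scheme the degree-$1$ arrow of the $q$-cycle lives in $\triangle_q$; hence every arrow at a $q$-corner of any \emph{other} triangle is forced to degree $0$. Two things can then go wrong. First, nothing in your construction prevents $\triangle_p=\triangle_q$ for distinct punctures $p,q$, in which case that single triangle would carry two degree-$1$ arrows. Second, and more seriously, if some internal triangle $\triangle$ different from all the $\triangle_p$ has all three of its vertices at punctures, then all three of its arrows are pinned to degree $0$ by the respective puncture cycles, and there is no arrow left in its $3$-cycle to carry degree $1$. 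Your description ``triangulate small neighbourhoods of each puncture \ldots\ then extend'' does not exclude either scenario, so the cut you define need not exist.

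The paper sidesteps this by a different construction of $(\tau,d)$: it argues by induction on the number of punctures, inserting each new puncture inside a triangle $\Delta$ having exactly one boundary side and defining the degrees on the three new triangles explicitly. The key invariant it carries along is not ``every puncture sees an internal triangle'' but rather conditions (2) and (3): there is always a triangle with exactly one boundary side available for the next insertion, and every triangle that has both a puncture vertex and a boundary side has its unique arrow in degree $0$. Because the new puncture is placed next to the boundary, the two new internal triangles automatically have a boundary vertex, so their $3$-cycles always contain an arrow not lying on any puncture cycle, and admissibility is genuinely automatic. Your approach could be repaired by proving that one can choose $\tau$ so that every internal triangle has at least one boundary vertex and the $\triangle_p$ are pairwise distinct; but that is essentially what the paper's induction establishes, and it needs an argument rather than an assertion.
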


\begin{proof}
We proceed by induction on the number of punctures. Assume that there exists a triangulation $\tau$ and a cut $d$ with the following properties:
\begin{enumerate}
\item $\tau$ is a valency $\geq 3$-triangulation;
\item there exists a triangle $\Delta$ in $\tau$ with exactly one side being a boundary segment;
\item for each triangle of $\tau$ containing one puncture and one boundary segment, the corresponding arrow has degree $0$.
\end{enumerate}

By Proposition \ref{prop:pd(S_i)>2}, such graded triangulation gives rise to a  cut algebra of global dimension $\leq 2$. 

Now we add a puncture $p$ in the surface, and we construct a graded triangulation $(\tau',d')$ of the surface $(\Sigma,\mathbb M\cup\{p\})$ with the same properties.  We may assume that $p$ is in the triangle $\Delta$. Denote by $i$ and $j$ the sides of $\Delta$ that are arcs in $\tau$, and link $p$ to the three vertices of $\Delta$ with arcs $\ell$, $\ell_i$ and $\ell_j$. 
  \[\scalebox{0.8}{
\begin{tikzpicture}[>=stealth,scale=1]

\draw (2,3)--node[fill=white,inner sep=0pt]{$j$}(4,0)--node[fill=white,inner sep=0pt, left]{$\ell_j$}(2,1)--node[fill=white,inner sep=1pt]{$\ell$}(2,3)--node[fill=white,inner sep=0pt]{$i$}(0,0)--node[fill=white,inner sep=0pt]{$\ell_i$}(2,1);
\node at (2,1) {$\bullet$};
\draw[thick] (2,1.4) arc (90:208:0.4);
\draw[thick] (3.5,0.75) arc (120:150:1);
\node at (1,1) {$\Delta_i$};
\node at (3,1) {$\Delta_j$};
\node at (2,0.5) {$\Delta'$};
 
\draw [gray, line width=5pt] (0,0)--(4,0);

\end{tikzpicture}}\]

We obtain a valency $\geq 3$-triangulation $\tau'=\tau\cup \{\ell,\ell_i,\ell_j\}$ of $(\Sigma,\mathbb M\cup\{p\})$. This triangulation contains three new triangles: two internal $\Delta_i$ (resp. $\Delta_j$) formed respectively by the arcs $i$, $\ell_{i}$ and $\ell$ (resp.  $j$, $\ell_j$ and $\ell$) and one triangle $\Delta'$ with exactly one side being a boundary segment. Now define the following cut $d'$ on $\tau'$:
\[d'(\alpha) = \left\{\begin{array}{ll}  d(\alpha) & \textrm{if } \alpha\in Q(\tau)\cap Q(\tau');\\
  1 & \textrm{if }\alpha\textrm{ is the arrow in }\Delta_i\textrm{ parallel to }i;\\ 
  1 & \textrm{if }\alpha\textrm{ is the arrow in }\Delta_j\textrm{ parallel to }\ell;\\
  0 & \textrm{else.} 
  \end{array}\right.\]
  It is straigtforward to check that $d'$ is a cut for $\tau'$ satisfying $(3)$ above.

 Now there clearly exists $(\tau,d)$ satisfying $(1)$, $(2)$ and $(3)$ in case $\surf$ has no punctures and is not a $n$-gon with $n\leq 4$. Moreover the next picture shows the existence of such a $(\tau,d)$ in case $\surf$ is a once punctured $4$-gon, twice punctured $2$ or $3$-gon, and a monogon with three punctures. 
 
   \[\scalebox{0.8}{
\begin{tikzpicture}[>=stealth,scale=1]
\draw[gray, line width=5pt] (0,0) circle (2);
\node at (0,0){$\bullet$};
\node at (-2,0){$\bullet$};
\node at (2,0){$\bullet$};
\node at (0,2){$\bullet$};
\node at (0,-2){$\bullet$};
\draw (2,0)--(0,0)--(0,2)..controls (-1.5,1.5) and (-1.5,-1.5)..(0,-2)--(0,0);
\node at (1,1) {$\Delta$};
\draw[thick] (0,0.3) arc (90:270:0.3);

\draw [gray, line width=5pt](6,0) circle (2);
\node at (4,0){$\bullet$};
\node at (5,0){$\bullet$};
\node at (7,0){$\bullet$};
\node at (6,2){$\bullet$};
\node at (6,-2){$\bullet$};
\draw (5,0)--(6,2)--(7,0)--(5,0)--(6,-2)--(7,0);
\draw[dotted] (4,0)--(5,0);
\node at (7.5,0) {$\Delta$};
\draw[thick] (5.5,0) arc (0:62:0.5);
\draw[thick] (6.5,0) arc (180:242:0.5);

\draw [gray, line width=5pt](12,0) circle (2);
\node at (12,2){$\bullet$};
\node at (12,-1){$\bullet$};
\node at (11.5,0){$\bullet$};
\node at (12.5,0){$\bullet$};
\draw (11.5,0)--(12,2)--(12.5,0)--(11.5,0)--(12,-1)--(12.5,0);
\draw (12,2).. controls (10.5,1) and (11,-1)..(12,-1);
\draw (12,2).. controls (13.5,1) and (13,-1)..(12,-1);
\draw[thick] (11.5,1.55) arc (190:255:0.4);
\draw[thick] (11.8,0) arc (0:70:0.3);
\draw[thick] (12.2,0) arc (180:250:0.3);
\draw[thick] (12.1,-0.75) arc (60:10:0.3);
\node at (12,-1.5) {$\Delta$};
\end{tikzpicture}}\]

The only remaining case is the case of a once punctured triangle. In this case, there does not exist a graded triangulation with properties $(1)$, $(2)$ and $(3)$ but there clearly exists a valency $\leq 3$-triangulation and a cut such that the cut algebra has global dimension $\leq 2$.
\end{proof}

\section*{Acknowledgements}

A few of the results in this work were obtained during a visit that the second author made to the first and third authors in June 2014. The visit was made possible by the CNRS-CONACyT's Solomon Lefschetz International Laboratory (LAISLA), whose financial support we gratefully acknowledge.

\end{document}